\newtheorem{theorem}{Theorem}[section]
\newtheorem{proposition}{Proposition}[section]
\newtheorem{lemma}{Lemma}[section]
\newtheorem{corollary}{Corollary}[section]
\newtheorem{remark}{Remark}[section]
\newcommand{\Rmnum}[1]{\expandafter\@slowromancap\romannumeral #1@}
\numberwithin{equation}{section}
\title{Scattering of solutions to the nonlinear Schr\"odinger equations with regular potentials}
\author{Xing Cheng, Ze Li and Lifeng Zhao}
\date{}
\begin{document}
\maketitle

\begin{abstract}
In this paper, we prove the scattering of radial solutions to high dimensional energy-critical nonlinear Schr\"odinger equations with regular potentials in the defocusing case.
\end{abstract}

\section{Introduction}\label{se1}
In this paper, we consider the nonlinear Schr\"odinger equation with a potential:
\begin{align}\label{1}
\left\{ \begin{array}{l}
 i{\partial _t}u + \Delta_V u + \lambda |u{|^{p - 1}}u = 0, \\
 u(0,x) = {u_0}(x). \\
 \end{array} \right.
\end{align}
where $u:\Bbb R\times{ \Bbb R}^d\to {\Bbb C}$, $\Delta_V=\Delta-V,\ V: \mathbb{R}^d \to \mathbb{R}$, $\lambda= \pm 1$ and $1< p < \infty$.
If $\lambda=-1$, the equation is called defocusing; otherwise, it is called focusing if $\lambda = 1$.

There are many important areas of application which motivate the study of nonlinear Schr\"odinger equations with potentials (Gross-Pitaevskii equation). In the most fundamental level, it arises as a mean field limit model governing the interaction
of a plenty large number of weakly interacting bosons \cite{Hepp,Lieb,Spohn}. In a macroscopic level, it arises as the equation
governing the evolution of the envelope of the electric field of a light pulse propagating in a medium with defects, see for instance, \cite{GSW,GWH}.

First, we recall some history on the scattering of solutions to \eqref{1} for small initial data. When $V=0$, it has been shown that for $d\ge 1,\ p=1+\frac{2}{d}$ is the critical exponent for scattering. In fact, for $1 + \frac{2}{d} < p < 1 + \frac{4}{d},\ d\ge 1$, decay and scattering of the solution in the small data case is proved by McKean, Shatah \cite{MS}. For $1 + \frac{4}{d} \le p \le 1+ \frac4{d-2}, \ d\ge 3$ and {}{$1 + \frac{4}{d} \le p < \infty,\ d= 1,2$}, local wellposedness and small data scattering was proved by Strauss \cite{S2}. Moreover,  Strauss \cite{W2} showed when $1< p\le 1+\frac2d$ for $d\ge 2$ and $1< p \le 2$ for $d=1$, the only scattering solution is zero.
For all energy subcritical $p$, Visciglia \cite{Vis} proved the $L^r$ norm of the solution decays provided $2<r<\frac{2d}{d-2}$ when $d\ge 3$ and $2<r<\infty$ when $d=1,2$.
When $V\ne 0$, the situation is much more involved. In \cite{SVN}, Cuccagna, Georgiev, Visciglia proved decay and scattering for small initial data for $p>3$ in one dimension for some Schwartz potentials.

Second, let us review known results on the scattering of solutions to \eqref{1} for general data. There are a lot of works devoted to the case $V=0$. Ginibre, Velo \cite{JG} proved the scattering when $1 + \frac4d < p < 1 + \frac4{d-2}, \ d\ge 3$ and $1 + \frac4d < p < \infty, \ d = 1,\,2$ by exploiting the Morawetz estimate in the defocusing case. We also mention the works of Nakanishi \cite{N}, Planchon, Vega \cite{P} for scattering of subcritical Schr\"odinger equations. Global well-posedness and scattering in the energy space for radial data in the energy critical defocusing case was proved by Bourgain \cite{bou} by means of induction on energy. This result was extended to non-radial data by Colliander, Keel, Staffilani, Takaoka, Tao \cite{JMGHT2} and high dimensions by Ryckman, Visan \cite{RV, V}. For energy critical focusing case, Kenig, Merle \cite{KM} showed global wellposedness and scattering versus blow-up dichotomy below the ground state energy for radial data when $d = 3, 4, 5$ by using the concentration-compactness/rigidity method. The radial assumption was removed in higher dimensions when $d\ge 4$ by \cite{dod,KV}.
In the mass critical case, Killip, Visan, Tao, Zhang \cite{KV2, TVZ1, TVZ2} for radial data and Dodson \cite{D1, D2, D3, D4} for non-radial data proved scattering for initial data of finite mass in the defocusing case and the dichotomy below the ground state mass in the focusing case.

When $V\ne 0$, the long time behavior is strongly affected by the potential.
For harmonic potential, it is widely conjectured that the solution will not scatter in energy space. For partial harmonic confinement, scattering for some $p$ was proved in Antonelli, Carles, Drumond, Silva \cite{ACS}. When $p=3$, $2\le d\le5$, Hani, Thomann \cite{HT} showed the only scattering solution is zero if there is one direction which is not trapped.
For regular potentials $V$, scattering is affected by the discrete spectrum of the Schr\"odinger operator. Generally, if there is no discrete spectrum, the solution scatters in the defocusing case for any initial data or in the focusing case for the initial data with energy below the ground state.  There are a lot of works on this topic, for instance, Colliander, Czubak, Lee \cite{CCL} proved scattering for the cubic NLS with electric and magnetic potentials by
interaction Morawetz estimate. Concerning the scattering theory with a potential in the subcritical case, we also mention the works of Hong \cite{H}, Lafontaine \cite{L}, and Banica, Visciglia \cite{B}.

In the article, we will consider the potentials satisfying the following assumptions:\\
\noindent{\bf{ Regular Potential Hypothesis}}\\
Suppose that $V$ is a real-valued potential satisfying\\
{\it (i)} ${\left\langle x \right\rangle ^{N}}\left( {\left| V(x) \right| + \left| {\nabla V}(x) \right|} \right) \in {L^\infty(\mathbb{R}^d) }$, for some $ N>d$;\\
{\it (ii)} the spectrum of $- \Delta_V$ is continuous, and 0 is neither a resonance nor an eigenvalue of $-\Delta_V$;\\
{\it (iii)} ${\left\langle x \right\rangle ^\alpha }V(x)$ is a bounded operator from ${H^\eta }$ to ${H^\eta }$ for some $\alpha  > d + 4$, $\eta > 0$ with $\mathcal{F}{V}\in L^1$;

\begin{remark}\label{re1.1}
The continuous spectrum assumption in {\it (ii)} is reasonable. If $- \Delta_V$ has discrete spectrum, it seems that the solution may not scatter in the energy space even in the small data case. This is supported in some sense by Soffer, Weinstein \cite{SW}. They proved that the solution to nonlinear Klein-Gordon equation with a potential (NLKG) having small initial data decays to zero as time goes to infinity. However, the linear Klein-Gordon equation with a potential (LKG) admits a family of periodic solutions with $H^1$ norms tending to zero. Thus solutions to NLKG with small data can not scatter to those periodic solutions to LKG, i.e. the wave operator is not complete.
\end{remark}

\begin{remark}\label{re1.2}
The hypothesis {\it (iii)} is assumed to provide a dispersive estimate of $e^{it\Delta_V}$. The condition given here is due to  Journe, Soffer, Sogge \cite{JAC}. There are many related works in this direction such as Rodnianski, Schlag \cite{RS}, Schlag \cite{W3}. When $d=3$, weaker assumption on $V$ is available for the dispersive estimates, see for instance \cite{BG,DFVV}.
\end{remark}

In our case, for $1+\frac{4}{d}<p<1+\frac{4}{d-2}$, global well-posedness and scattering can be proved by interacting Morawetz identity, see for instance \cite{CCL}. Thus we only need to consider the energy-critical case. In the following, we prove global well-posedness and scattering for radial data in high dimensions ($d\ge 7$).

\begin{theorem}\label{th1.2}
Assume that $V$ is radial, nonnegative, $\partial_rV\le 0$, and $V$ satisfies Regular Potential Hypothesis.  For $d\ge 7$, $p=1+\frac{4}{d-2}$, $\lambda=-1$, $u_0\in {\dot H}_{rad}^1({\Bbb R}^d)$, \eqref{1} is globally wellposed and moreover, there exists $u_+\in {\dot H}^1$ such that
$$
\mathop {\lim }\limits_{t\to \infty}\|u(t)-e^{it\Delta_V}u_+\|_{{\dot H}^1}=0.
$$
\end{theorem}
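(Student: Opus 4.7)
The plan is to follow the Kenig--Merle concentration--compactness/rigidity roadmap as adapted to the energy-critical defocusing NLS in high dimensions by Ryckman--Visan and Visan, with extra work to accommodate the potential $V$. Assuming the theorem fails, one extracts a minimal non-scattering solution which is almost periodic modulo symmetries, and then rules it out by a Morawetz-type monotonicity argument based on the sign conditions $V \geq 0$, $\partial_r V \leq 0$.

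The first step is the linear and perturbative theory for $e^{it\Delta_V}$. Hypothesis \emph{(iii)} places $V$ in the class of Journ\'e--Soffer--Sogge, yielding the dispersive bound $\|e^{it\Delta_V}P_c\|_{L^1\to L^\infty}\lesssim |t|^{-d/2}$; hypothesis \emph{(ii)} gives $P_c=I$. Keel--Tao then produces full Strichartz estimates, and the bound $\|Vu\|_{L^{2d/(d+2)}}\lesssim \|u\|_{\dot H^1}$ coming from hypothesis \emph{(i)} controls the linear potential term via Duhamel. A standard fixed point in $L^{2(d+2)/(d-2)}_{t,x}$ yields local well-posedness in $\dot H^1$, small-data scattering, and a long-time perturbation lemma of Colliander--Keel--Staffilani--Takaoka--Tao type.

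Next comes a radial linear profile decomposition for $e^{it\Delta_V}$. Since $V$ is radial, no spatial translation parameters appear; each profile carries only a scale $\lambda_n^{(j)}$ and a time shift $t_n^{(j)}$. The construction rests on the observation that the conjugation of $\Delta_V$ under rescaling equals $\Delta - V_\lambda$ with $V_\lambda(x)=\lambda^2 V(\lambda x)$; by hypothesis \emph{(i)}, $V_\lambda\to 0$ in the norms needed so that $e^{it\Delta_V}$ approaches $e^{it\Delta}$ as $\lambda\to 0$ or $\lambda\to\infty$. Consequently, profiles at degenerating scales $\lambda_n^{(j)}\to 0,\infty$ are governed by the free energy-critical NLS (which scatters by Ryckman--Visan/Visan) and can be embedded into genuine solutions of \eqref{1} via the perturbation lemma; profiles at scale $\lambda_n^{(j)}\equiv 1$ are solutions of \eqref{1} itself. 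Standard Kenig--Merle extraction then furnishes a critical almost periodic element $u_c$ with orbit-compactness modulo a continuous scale function $\lambda(t)>0$.

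The final step is rigidity via the Morawetz identity with weight $a(x)=|x|$. Direct commutator computation on radial $u$ yields, schematically,
\[
\frac{d}{dt}\operatorname{Im}\int \bar u\,\partial_r u\,dx = \int \left[\frac{(d-1)(d-3)}{|x|^3}|u|^2 + \frac{4(d-1)}{d}\frac{|u|^{p+1}}{|x|} - 2(\partial_r V)|u|^2\right] dx,
\]
and all three integrands are non-negative under $\partial_r V\le 0$. Integrating in time and using $\|u_c(t)\|_{\dot H^1}\lesssim 1$ gives
\[
\int_0^T\!\!\int_{\mathbb{R}^d} \frac{|u_c|^{p+1}}{|x|}\,dx\,dt \lesssim 1 \quad\text{uniformly in } T.
\]
After reducing, via a Killip--Visan-type argument, to the scenario where $\lambda(t)$ is bounded away from $0$ and $\infty$, almost periodicity forces the left-hand side to grow linearly in $T$, hence $u_c\equiv 0$, a contradiction. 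The main obstacle I foresee is the profile decomposition and its nonlinear embedding: matching free profiles at vanishing or diverging scales to solutions of the perturbed equation and controlling the potential-induced errors using only the decay in hypotheses \emph{(i)}--\emph{(iii)} is the delicate technical core of the argument.
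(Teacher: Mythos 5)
Your overall architecture (Strichartz/perturbation theory for $e^{it\Delta_V}$, a radial profile decomposition with only scale and time parameters, convergence of the rescaled operators $e^{it(\Delta-V_\lambda)}$ to $e^{it\Delta}$ so that degenerating-scale profiles are handled by Visan's free scattering theorem, extraction of a critical element) is the same as the paper's. Two points, however, need attention, and the first is a genuine gap. Your rigidity step uses the untruncated Lin--Strauss Morawetz weight $a(x)=|x|$ and then asserts that integrating in time and ``using $\|u_c(t)\|_{\dot H^1}\lesssim 1$'' bounds $\int_0^T\!\!\int |u_c|^{p+1}/|x|$ uniformly in $T$. The boundary term $\operatorname{Im}\int \bar u\,\partial_r u\,dx$ is controlled by $\|u\|_{\dot H^{1/2}}^2$ (or $\|u\|_{2}\|\nabla u\|_2$), not by $\|u\|_{\dot H^1}^2$; for data merely in $\dot H^1$ this quantity need not be finite, so the estimate does not close as written. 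The fix is to localize the weight, and this is exactly what the paper does, choosing instead $\phi_R(x)=R^2\phi(|x|/R)$ with $\phi(x)=|x|^2$ near the origin: then $|\frac{d}{dt}V_R|\lesssim R^2\|\nabla u_c\|_2^2$ by Hardy, the second derivative is bounded below by $\delta_1\|u_c(0)\|_{\dot H^1}$ once the tail terms on $R\le|x|\le 2R$ are absorbed using the uniform smallness \eqref{pppl} coming from compactness of the orbit, and linear growth in $t$ versus the $O(R^2)$ bound gives the contradiction. If you insist on $a=|x|$ you must truncate it and control the error terms in the exterior region by the same compactness; either way the localization is not optional.

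Second, your statement that the critical element is compact only ``modulo a continuous scale function $\lambda(t)$,'' to be removed later ``via a Killip--Visan-type argument,'' is both unnecessary and hard to justify here: those scale-reduction arguments lean on the scaling invariance of the equation, which $V$ destroys. The paper's point is that no scale function survives at all: any profile whose scale $h_n^j$ tends to $0$ or $\infty$ is asymptotically governed by the free energy-critical NLS (via the operator-convergence Propositions \ref{888} and \ref{lap}), hence scatters and has finite critical norm, so the minimal element must arise from a fixed-scale ($h_n^j\equiv 1$) profile and its orbit $\{u_c(t)\}$ is pre-compact in $\dot H^1_{rad}$ outright. Establishing the two convergence statements (strong operator topology uniformly in time, and operator norm on bounded intervals) that make this embedding rigorous is the technical core you correctly flagged, and your proposal should be read as deferring rather than supplying it.
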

\begin{remark}\label{re1.k}
A similar theorem is possible if $V$ has a small negative part. The radial assumption for $V$ is to ensure that every radial initial data evolves into a radial solution. If one considers non-radial data, the assumption $\partial_r V\le 0$ can be replaced by $x\cdot\nabla V\le0$.
\end{remark}

\begin{remark}\label{re1.3}
 Since $V\ge0$, the spectrum of $-\Delta_V$ is included in $[0,\infty)$. Since $V\in L^2$, by Weyl's criterion, the essential spectrum of $-\Delta_V$ is $(0,\infty)$. The decay of $V$ guarantees that there are no positive eigenvalues by Kato's theory. Moreover, it is known that there is no resonance for $d\ge5$. Therefore, for $V$ in Theorem \ref{th1.2}, $(ii)$ in Regular Potential Hypothesis is equivalent to that 0 is not an eigenvalue of $-\Delta_V$. But this is true if $V$ is non-negative. Therefore, $(ii)$ is not needed in the presentation of Theorem \ref{th1.2}.
\end{remark}

\begin{remark}
The potentials satisfying the assumptions in Theorem 1.1 do exist. In fact, the Gaussian function $e^{-|x|^2}$ satisfies all the assumptions in the theorem.
\end{remark}

The facts that the equation is not scaling invariant and the energy space is homogeneous bring some new difficulties.
As is known, the scaling invariance makes the bounded set in a homogeneous space noncompact. If the energy is not in the homogenous space, we can rule out one of the direction of the possible scaling such as what has been done in the study of scattering to nonlinear Klein-Gordon equations.
If the equation is scaling invariant, the scaling will disappear when one does some estimates in the homogeneous space, which makes the analysis of limits of  scaling not so important.
In our case, because the energy lies in ${\dot H}^1$ level, we have to handle two directions of the scaling. Meanwhile, the lack of scaling invariance makes the estimates sensitive to the varying of scaling.
For instance, in the linear profile decomposition for the linear Schr\"odinger equation, the remainder term governed by the linear Schr\"odinger equation is asymptotically zero in Strichartz norms. However, if the scaling goes to infinity or zero, the remainder term tends to be a solution of free Schr\"odinger equation, for which whether it is asymptotically zero or not is not obvious.
In order to overcome the difficulty, we prove two convergence results concerning the scaled Schr\"odinger operator and the free Schr\"odinger operator, namely Proposition \ref{888} and \ref{lap}. Proposition \ref{888} gives the convergence of scaled Schr\"odinger operator to free Schr\"odinger operator in the strong operator topology. Proposition \ref{lap} proves the convergence in operator norm in a finite time interval. Although the strong operator topology convergence is weak, it is useful in proving profile decomposition since it is uniform in time. The operator norm convergence is essential in proving that the remainder is still asymptotically zero in Strichartz norms after taking a limit of scaling.

We assume $d\ge7$ because the Strichartz norm in ${\dot H}^1$ level agrees with $\big\|(-\Delta_V)^{\frac12}$ $u\big\|_{S^0}$, where $S^0$ is the $L^2$ level Strichartz norm. However, for $d\le4$, the two norms are not equivalent in general. The equivalence relation can compensate the loss of Leibnitz rule for $(-\Delta_V)^{\frac12}$ and the non-commutativity between $\nabla$ and $e^{it\Delta_V}$. In principle, the scattering for \eqref{1} when $d=5,6$ can be proved similarly, we rule out the two cases for technical problems. The focusing case can be dealt with similarly, in the subcritical case, see for instance \cite{H}.

The article is organized as follows. In Section 2, we give some estimates on Schr\"odinger operators and prove local well-posedness and stability theorem. In Section 3,  we prove some important convergence lemmas concerning scaled Schr\"odinger operators and free Schr\"odinger operators, as an application, we give the linear profile decomposition. In section 4, Theorem \ref{th1.2} is proved by the compactness-contradiction arguments.


\vspace*{4pt}\noindent{\bf Notation and Preliminaries.}
We denote $\mathcal{F}_V$ as the distorted Fourier transformation defined in Section 3.
For $s \in \mathbb{R}$,
the fractional differential operator $|\nabla|^s$ is defined by $\mathcal{F}(|\nabla|^s f)(\xi)  = |\xi|^s \mathcal{F}(f)(\xi).$
We also define
$\langle{\nabla}\rangle^s$ by $\mathcal{F}(\langle{\nabla}\rangle^s f)(\xi)= ( 1 + |\xi|^2)^\frac{s}2 \mathcal{F}(f)(\xi)$.

We define the homogeneous Sobolev norms by
\[ \|f\|_{\dot{H}^s(\mathbb{R}^d)} = \big\||\nabla|^s f\big\|_{L^2(\mathbb{R}^d)},\]
and inhomogeneous Sobolev norms by
\[  \|f\|_{H^s(\mathbb{R}^d)} = \big\|\langle{\nabla}\rangle^s f\big\|_{L^2(\mathbb{R}^d)}.\]
The ${\dot H}^1_V$ norm is defined by
$$\|u\|_{\dot H_V^1}^2 = \int _{{\Bbb R}^d}|\nabla u{|^2} + V|u|^2\,\mathrm{d}x.
$$

The Besov norms are defined as follows:
Let $\varphi \in C_c^\infty(\mathbb{R}^d)$ be such that $\varphi(\xi) = 1$ for $|\xi|\le 1$ and $supp \,\varphi(\xi) \subset \{\xi: |\xi|\le 2\}$.
Then we define $\psi_k(\xi) = \varphi\Big(\frac{\xi}{2^k}\Big) - \varphi\Big(\frac{\xi}{2^{k-1}}\Big), \ \forall\, k\in \mathbb{Z}$.
For $1 \le r,p \le \infty$, $s\in \mathbb{R}$, we define for $u\in \mathcal{S}'(\mathbb{R}^d)$,
\begin{equation}
\|u\|_{\dot{B}_{r,p}^s} =
\begin{cases}
 \left(\displaystyle{\sum\limits_{k\in \mathbb{Z}}} 2^{ksp} \big \|\mathcal{F}^{-1}( \psi_k \mathcal{F} { u})\big\|_{L^r_x}^p\right)^\frac1p,   \quad p < \infty;\\
 \sup\limits_{k\in \mathbb{Z}} \ 2^{ks} \big\|\mathcal{F}^{-1}( \psi_k \mathcal{F} { u})\big\|_{L_x^r} ,    \qquad  \quad \quad  p = \infty.
\end{cases}
\end{equation}
Denote $\phi=\mathcal{F}^{-1}\psi$.

For a linear operator $A$ from Banach space $X$ to Banach space $Y$, we denote its operator norm by $\|A\|_{L(X;Y)}$.
All the constants are denoted by $C$ and they can change from line to line. We use $\varepsilon$ to denote some sufficiently small constant and it may vary from line to line. We use the notation $b^+$ and $b^-$ to stand for a number slightly less than $b$ and a number slightly bigger than $b$ respectively.

\begin{proposition}[Dispersive estimate of $e^{it\Delta_V}$, \cite{JAC}]\label{pro2.4}
Let $d\ge3$, ${\left\langle x \right\rangle ^\alpha }V(x)$ is a bounded operator from ${H^\eta }$ to ${H^\eta }$ for some $\alpha  > d + 4$, $\eta > 0$, with $\mathcal{F}{V}\in L^1$.
Assume also that 0 is neither an eigenvalue nor a resonance of $-\Delta_V$. Then
$${\left\| {{e^{it\Delta_V}P_c(\Delta_V)}} \right\|_{p' \to p}} \le C{\left| t \right|^{ - \frac{d}{2}\big(1 - \frac{2}{p}\big)}},
$$
where $\frac{1}{{p'}} + \frac{1}{p} = 1$, $2\le p \le \infty$.
\end{proposition}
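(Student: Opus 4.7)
\medskip
\noindent\textbf{Proof proposal.} The plan is to prove the $L^1\!\to\!L^\infty$ endpoint
$$\|e^{it\Delta_V}P_c(\Delta_V)\|_{L^1\to L^\infty}\le C|t|^{-d/2},$$
and then interpolate with the trivial $L^2\!\to\!L^2$ bound coming from the spectral theorem (self-adjointness of $-\Delta_V$ and boundedness of $P_c$) to obtain the full range $2\le p\le\infty$. To implement this, I would express the propagator through the spectral resolution via Stone's formula,
$$e^{it\Delta_V}P_c=\frac{1}{2\pi i}\int_0^\infty e^{-it\lambda}\bigl[R_V(\lambda+i0)-R_V(\lambda-i0)\bigr]\,d\lambda,$$
where $R_V(z)=(-\Delta_V-z)^{-1}$, and then pass to a distorted Fourier representation $e^{it\Delta_V}P_c f(x)=\int e^{-it|\xi|^2}e_V(x,\xi)\widehat{f}_V(\xi)\,d\xi$ in which the generalized eigenfunctions $e_V(x,\xi)$ are constructed via the Lippmann--Schwinger equation using the resolvent of $-\Delta$.

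Next I would introduce a Littlewood--Paley decomposition adapted to $\sqrt{-\Delta_V}$: choose $\phi$ supported in $\{|\xi|\sim 1\}$ with $\sum_k \phi(2^{-k}\cdot)=1$ on $(0,\infty)$, and write $e^{it\Delta_V}P_c=\sum_{k\in\mathbb{Z}}T_k(t)$ with $T_k(t)=e^{it\Delta_V}\phi(2^{-k}\sqrt{-\Delta_V})P_c$. For each dyadic piece I would aim to establish the kernel bound
$$\|T_k(t)\|_{L^1\to L^\infty}\le C\,2^{kd}\min\bigl(1,(2^{2k}|t|)^{-d/2}\bigr).$$
Summing this over $k\in\mathbb{Z}$ yields exactly $|t|^{-d/2}$ after optimizing the split at $2^{2k}|t|\sim 1$, which gives the endpoint estimate. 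The uniform-in-$k$ bound $2^{kd}$ comes from Bernstein's inequality applied to the spectral multiplier (which requires integrability of the kernel of $\phi(2^{-k}\sqrt{-\Delta_V})$, whence the hypothesis $\mathcal{F}V\in L^1$ is used), while the decay factor $(2^{2k}|t|)^{-d/2}$ is produced by a stationary-phase argument on the oscillatory integral in $\lambda$, with phase $e^{-it\lambda}$ and the smooth cutoff $\phi(2^{-k}\sqrt{\lambda})$ localizing to a dyadic window.

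The main obstacle, and the place where the full strength of the hypotheses enters, is the low-frequency regime $k\to-\infty$. There the Born series expansion
$$R_V(\lambda\pm i0)=R_0(\lambda\pm i0)-R_0(\lambda\pm i0)VR_V(\lambda\pm i0)$$
must be iterated and controlled in appropriate weighted spaces; the absence of a zero resonance or eigenvalue is precisely what ensures that $R_V(\lambda\pm i0)$ admits an expansion in weighted $L^2$ with a quantitative remainder as $\lambda\to 0^+$, which in turn lets us extract the factor $2^{kd}$ without losses. The polynomial decay $\langle x\rangle^{\alpha}V\in L(H^\eta;H^\eta)$ with $\alpha>d+4$ is the input required for the limiting absorption principle and for smoothness of the distorted plane waves $e_V(x,\xi)$ in $\xi$ to the order needed when we integrate by parts $\lceil d/2\rceil$ times in the stationary-phase step.

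An alternative route I would have in mind is the wave-operator method of Yajima: if one can show $W_{\pm}=s\text{-}\lim_{t\to\pm\infty}e^{-it\Delta_V}e^{it\Delta}P_c$ extends to a bounded operator on $L^p$ for $1\le p\le\infty$ together with its inverse, then the intertwining identity $e^{it\Delta_V}P_c=W_\pm e^{it\Delta}W_\pm^*$ immediately transfers the classical dispersive estimate for $e^{it\Delta}$ to $e^{it\Delta_V}P_c$. Either way, the quantitative hypotheses in \emph{(iii)} and the spectral hypothesis in \emph{(ii)} are tailored exactly so that the machinery of \cite{JAC} applies, and I would simply invoke that result after verifying the hypotheses have been matched.
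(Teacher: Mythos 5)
The paper offers no proof of this proposition: it is imported verbatim from Journe--Soffer--Sogge \cite{JAC}, which is exactly what your final paragraph does after checking that hypothesis \emph{(iii)} and the spectral assumption match the hypotheses of that theorem. Your preceding sketch is a reasonable reconstruction of the high/low-energy strategy of \cite{JAC} (and of the Yajima wave-operator alternative), so the two ``proofs'' coincide in substance --- both rest on the citation --- and no further comparison is needed.
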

By the abstract theorem in Keel, Tao \cite{KT}, one can prove:
\begin{proposition}[Strichartz estimate]\label{1234}
Suppose that $V$ is the potential in Theorem 1.1. And assume that $(p,q)$ and $(\widetilde{p},\widetilde{q})$ are Strichartz admissible with $2\le p,q,\widetilde{p},\widetilde{q}\le \infty$ except the endpoint $(p,q,d)=(2,\infty,2)$, namely
$$\frac{2}{p} + \frac{d}{q} = \frac{d}{2},$$
then we have
\begin{align*}
\left\| e^{it\Delta_V} f \right\|_{L_t^p L_x^q(I \times \mathbb{R}^d)} & \le C\|f\|_{L^2},\\
\left\|\int^t_0 e^{i (t-\tau) \Delta_V}F(\tau) \,\mathrm{d}\tau \right\|_{L_t^p L_x^q(I \times \mathbb{R}^d)}& \le C\|F\|_{L_t^{\widetilde{p}'}L_x^{\widetilde{q}'}(I \times\Bbb R^d)},
\end{align*}
where $I$ is any interval containing $t=0$, $C$ is some constant depending only on $V, d, p, q$,
\end{proposition}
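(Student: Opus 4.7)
The plan is to reduce this to the abstract Keel--Tao theorem \cite{KT} applied to the propagator $U(t) = e^{it\Delta_V}$. What we need for that machine are exactly two ingredients: a uniform-in-time $L^2$ boundedness of $U(t)$ and a $t^{-\sigma}$ decay estimate of $U(s)U(t)^*$ between the endpoint dual pair; both are already in hand.

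First I would verify the energy bound. Since $V$ is real-valued and satisfies hypothesis $(i)$, the operator $-\Delta_V$ is self-adjoint on $L^2(\mathbb{R}^d)$, so by Stone's theorem $e^{it\Delta_V}$ is a unitary group; in particular $\|U(t)f\|_{L^2} = \|f\|_{L^2}$ uniformly in $t \in \mathbb{R}$. Next I would invoke Proposition \ref{pro2.4} for the decay bound. The hypothesis $(iii)$ of the Regular Potential Hypothesis supplies exactly the operator-boundedness and $\mathcal{F}V \in L^1$ assumptions Proposition \ref{pro2.4} requires, while hypothesis $(ii)$ guarantees that the spectrum of $-\Delta_V$ is purely continuous with $0$ neither eigenvalue nor resonance; consequently $P_c(\Delta_V) = I$ and Proposition \ref{pro2.4} collapses to
\[
\|e^{it\Delta_V}\|_{L^1 \to L^\infty} \le C|t|^{-d/2}, \qquad t \neq 0.
\]
Interpolating this with the $L^2$-unitarity and using $(e^{it\Delta_V})^* = e^{-it\Delta_V}$ yields the $TT^*$ decay $\|U(s)U(t)^* g\|_{L^\infty} \le C|t-s|^{-d/2}\|g\|_{L^1}$ needed by \cite{KT}.

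With these two inputs, the Keel--Tao abstract theorem with dispersive exponent $\sigma = d/2$ delivers the homogeneous estimate $\|e^{it\Delta_V}f\|_{L^p_t L^q_x(\mathbb{R}\times\mathbb{R}^d)} \le C\|f\|_{L^2}$ for every pair $(p,q)$ with $2\le p,q \le \infty$, $\frac{2}{p} + \frac{d}{q} = \frac{d}{2}$, and $(p,q,d)\neq(2,\infty,2)$, and the inhomogeneous bound with an arbitrary dual admissible pair $(\widetilde{p}',\widetilde{q}')$ on the right-hand side via the standard Christ--Kiselev argument inside the same framework. Passing from $\mathbb{R}$ to an arbitrary interval $I \ni 0$ is immediate by restricting the time integral, so the statement follows once the two inputs are in place.

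The only point requiring a remark is the removal of the spectral projector $P_c$ in Proposition \ref{pro2.4}: without hypothesis $(ii)$, $L^1 \to L^\infty$ decay would fail on bound states and Strichartz could only be asserted on the continuous subspace. Under the assumptions of Theorem \ref{th1.2} this is automatic by Remark \ref{re1.3}, so no genuine obstacle appears; the proof is essentially a black-box application of \cite{KT}.
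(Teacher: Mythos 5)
Your proposal is correct and matches the paper's approach exactly: the paper itself derives this proposition as a direct consequence of the Keel--Tao abstract theorem, with the dispersive estimate of Proposition \ref{pro2.4} (where $P_c(\Delta_V)=I$ under hypothesis $(ii)$) and $L^2$-unitarity as the two inputs. Your write-up simply makes explicit what the paper leaves as a one-line citation.
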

In addition, we say $(p,q)$ is a ${\dot H}^1$ level Strichartz pair if
$$\frac{2}{p} + \frac{d}{q} = \frac{d}{2}-1.$$
We define the Strichartz norms to be
\begin{align*}
 &{\left\| u \right\|_{{S^0}(I \times {{\Bbb R}^d})}} \buildrel \Delta \over = \mathop {\sup }\limits_{(q,r)\ admissible} {\left\| u \right\|_{L_t^qL_x^r(I \times {{\Bbb R}^d})}}, \\
 &{\left\| u \right\|_{{S^1}(I \times {{\Bbb R}^d})}} \buildrel \Delta \over = {\left\| u \right\|_{{S^0}(I \times {{\Bbb R}^d})}} + {\left\| {\nabla u} \right\|_{{S^0}(I \times {{\Bbb R}^d})}}.
\end{align*}
We also define $\forall\, s \ge 0$,
\begin{equation*}
\|u\|_{\dot{S}^s} = \big\||\nabla|^s u \big\|_{S^0}
\end{equation*}

\section{Preliminaries on Schr\"odigner operators, local theory and stability theorem}
We consider the defocusing energy-critical NLS, namely
\begin{align}\label{critical}
\left\{ \begin{array}{l}
 i{\partial _t}u + \Delta_V u -  |u|^{\frac{4}{d-2}}u = 0, \\
 u(0,x) = {u_0}(x), \\
 \end{array} \right.
\end{align}
where $u:{\Bbb R}\times{ \Bbb R}^d\to {\Bbb C}.$

Before going to the well-posedness theory, we recall some preliminaries on Schr\"odinger operators.
Remark 5.3 in Chen, Magniez and Ouhabaz \cite{CMO} proved the following result which implies the equivalence of $\|(-\Delta_{V})^{\frac{1}{2}}u\|_p$ and $\|\nabla u\|_p$ for some $p$.
\begin{lemma}\label{987}
Suppose that $V\ge0$ and $V\in L^{\frac{d}{2}-\eta}\bigcap L^{\frac{d}{2}+\eta}$ for some $\eta>0$, then for $p\in(1,d)$,
$$\big\|\nabla(-\Delta_V)^{-\frac{1}{2}}u\big\|_p\le C\|u\|_p.
$$
\end{lemma}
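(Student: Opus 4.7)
The plan is to follow the strategy of Chen--Magniez--Ouhabaz, proving the boundedness in three stages.

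The first stage is the $L^2$-bound, obtained from a direct quadratic-form identity. Setting $v = (-\Delta_V)^{-1/2}u$ and using $-\Delta = -\Delta_V - V$, one computes
\[
\big\|\nabla v\big\|_2^2 = \big\langle -\Delta v, v\big\rangle = \big\langle -\Delta_V v, v\big\rangle - \big\langle V v, v\big\rangle = \|u\|_2^2 - \big\|V^{1/2} v\big\|_2^2.
\]
Since $V\ge 0$, this immediately gives $\|\nabla (-\Delta_V)^{-1/2}u\|_2 \le \|u\|_2$, which handles the case $p=2$.

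The second stage establishes the gradient heat-semigroup bound $\|\nabla e^{-t(-\Delta_V)}\|_{p\to p}\le C t^{-1/2}$ for $p\in(1,d)$. Since $V\ge 0$, the Feynman--Kac formula produces the pointwise domination $0\le p_V(t,x,y)\le p_0(t,x,y)\le C t^{-d/2}e^{-c|x-y|^2/t}$, so the semigroup is $L^q$-contractive for every $q$. Iterating the Duhamel identity
\[
\nabla e^{-t(-\Delta_V)} = \nabla e^{t\Delta} - \int_0^t \nabla e^{(t-s)\Delta}\, V\, e^{-s(-\Delta_V)}\,ds,
\]
and pairing the free gradient-heat bound $\|\nabla e^{t\Delta}\|_{q\to p}\le C t^{-1/2-\frac{d}{2}(1/q-1/p)}$ with H\"older's inequality distributed around the $V$-factor, one recovers the desired time decay. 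The two-sided integrability hypothesis $V\in L^{d/2-\eta}\cap L^{d/2+\eta}$ controls both short- and long-time regimes; the restriction $p<d$ is precisely what keeps the H\"older pairing $\tfrac{1}{q} = \tfrac{1}{p} + \tfrac{1}{d/2\pm\eta}$ admissible for $q\in(1,\infty)$.

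In the third stage, the gradient heat-semigroup estimate is fed into the subordination formula
\[
\nabla(-\Delta_V)^{-1/2} = \frac{1}{\sqrt{\pi}}\int_0^\infty \nabla e^{-t(-\Delta_V)}\,\frac{dt}{\sqrt{t}},
\]
and Calder\'on--Zygmund theory is applied to the resulting integral kernel. The Gaussian heat-kernel bounds translate into H\"ormander-type off-diagonal estimates for the kernel of $\nabla(-\Delta_V)^{-1/2}$, and the Coulhon--Duong machinery on spaces of homogeneous type then upgrades the $L^2$-bound of the first stage to $L^p$ for $1<p<d$. I expect the main obstacle to be the Duhamel step: a naive H\"older bookkeeping on the perturbation term loses time decay, so one has to either iterate the Duhamel identity or work with weighted $L^q\to L^p$ semigroup bounds to recover the sharp $t^{-1/2}$ rate, and the endpoint $p=d$ reflects the breakdown of this bookkeeping at the boundary of the admissibility range.
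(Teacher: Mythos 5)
First, a point of comparison: the paper does not actually prove this lemma. It is quoted from Remark 5.3 of Chen--Magniez--Ouhabaz \cite{CMO}, so you are reconstructing an argument the authors outsource entirely. Your architecture (Feynman--Kac domination of the heat kernel, gradient semigroup bounds, subordination, Calder\'on--Zygmund theory) is the right skeleton and matches the Coulhon--Duong/Auscher-school arguments on which \cite{CMO} relies, and your first-stage $L^2$ identity is correct.

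There are, however, two genuine gaps, and they sit exactly where the difficulty of the lemma lives. (1) In the second stage you concede that a single Duhamel application produces the singularity $(t-s)^{-1/2-\frac d2(\frac1q-\frac1p)}$ with $\frac d2(\frac1q-\frac1p)$ close to $1$, hence non-integrable near $s=t$, and you defer the repair to ``iterating Duhamel or using weighted bounds'' without carrying either out. This is not a routine fix: iterating the Duhamel identity reproduces the same endpoint singularity at every step, and the proofs in the literature do not in fact establish the uniform bound $\|\nabla e^{-t(-\Delta_V)}\|_{p\to p}\lesssim t^{-1/2}$ for all $p\in(1,d)$; they work instead with $L^p$--$L^2$ off-diagonal (Gaffney-type) estimates for $\sqrt t\,\nabla e^{-t(-\Delta_V)}$ combined with a Blunck--Kunstmann/Auscher--Martell criterion. (2) In the third stage, Gaussian \emph{upper} bounds on $p_V(t,x,y)$ together with the Coulhon--Duong machinery yield weak $(1,1)$ and hence $L^p$ boundedness only for $1<p\le2$; they give no information on $\nabla_x p_V$ and therefore cannot by themselves produce the H\"ormander-type regularity you invoke for the kernel of $\nabla(-\Delta_V)^{-1/2}$, nor the range $2<p<d$. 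The endpoint $p=d$ reflects the failure of gradient heat-kernel estimates above $d$ --- that is, precisely the estimates you have not established. In short, the proposal correctly identifies the strategy but leaves the two load-bearing steps unproven.
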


From Lemma \ref{987} and the complex interpolation, see for instance \cite{DFVV}, we immediately deduce the following result.
\begin{corollary}[{}{Norm equivalence}]\label{900}
For $V$ in Theorem {}{\ref{th1.2}}, {$0\le  s \le 1$}, $1<p<\frac{d}{s}$, we have
$$\big\|(-\Delta_V)^{\frac{s}{2}}u\big\|_p\sim  \big \|(-\Delta)^{\frac{s}{2}}u\big\|_p.
$$
\end{corollary}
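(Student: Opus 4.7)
The plan is to bootstrap from the $s=1$ endpoint, which is essentially Lemma \ref{987}, to general $s\in(0,1)$ via Stein's complex interpolation between the trivial identity at $s=0$ and the gradient bound at $s=1$. Introduce the two analytic families
\[
T_z=(-\Delta)^{z/2}(-\Delta_V)^{-z/2},\qquad \widetilde T_z=(-\Delta_V)^{z/2}(-\Delta)^{-z/2},
\]
holomorphic in the strip $0\le\operatorname{Re}z\le 1$, and prove $L^{p}$ boundedness on the two boundary lines; the desired equivalence for $s=\operatorname{Re}z\in(0,1)$ will come from interpolating the ranges $p_0\in(1,\infty)$ (at $\operatorname{Re}z=0$) and $p_1\in(1,d)$ (at $\operatorname{Re}z=1$).

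First I would reduce the $s=1$ endpoints to what is already available. By $L^{p}$ boundedness of the classical Riesz transforms for $1<p<\infty$, Lemma \ref{987} is equivalent to $\|T_1 u\|_p\le C\|u\|_p$ for $1<p<d$. For the companion estimate $\|\widetilde T_1 u\|_p\le C\|u\|_p$, i.e.\ $\|(-\Delta_V)^{1/2}w\|_p\lesssim\|\nabla w\|_p$, I would invoke the dual/reverse direction of Lemma \ref{987} proved in the same paper \cite{CMO}: under our hypotheses ($V\ge0$, $V$ decaying as in Regular Potential Hypothesis (i)–(iii)) the semigroup $e^{t\Delta_V}$ admits Gaussian upper bounds, so the reverse Riesz-transform inequality for $-\Delta_V$ holds on the same range $1<p<d$.

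Next, for the imaginary axis $\operatorname{Re}z=0$, the operator $T_{iy}=(-\Delta)^{iy/2}(-\Delta_V)^{-iy/2}$ is a product of imaginary powers. The factor $(-\Delta)^{iy/2}$ is a Mikhlin multiplier, hence bounded on every $L^p$, $1<p<\infty$, with operator norm growing at most polynomially in $|y|$. For $(-\Delta_V)^{-iy/2}$ the same type of bound follows from a Hörmander–type spectral multiplier theorem for self-adjoint operators whose heat kernel satisfies Gaussian upper bounds; the Regular Potential Hypothesis guarantees this precisely. Hence $T_{iy}$ and $\widetilde T_{iy}$ are bounded on every $L^p$, $1<p<\infty$, with polynomial growth in $|y|$. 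On the line $\operatorname{Re}z=1$, I would factor $T_{1+iy}=(-\Delta)^{iy/2}T_1(-\Delta_V)^{-iy/2}$ (similarly for $\widetilde T_{1+iy}$); combining the previous two steps gives boundedness on $L^p$ for $1<p<d$ with polynomial growth in $|y|$.

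Stein's complex interpolation theorem applied to $T_z$ and $\widetilde T_z$ then yields, for $s=\theta\in[0,1]$, boundedness on $L^p$ with $\tfrac{1}{p}=\tfrac{1-\theta}{p_0}+\tfrac{\theta}{p_1}$ for any $p_0\in(1,\infty)$, $p_1\in(1,d)$. Sending $p_0\to\infty$, $p_1\to d$ captures $\tfrac{1}{p}\to\tfrac{s}{d}$, while $p_0,p_1\to1^+$ captures $\tfrac{1}{p}\to 1$, so the interpolated range is exactly $1<p<d/s$. Combining both directions delivers the stated equivalence. The main technical obstacle is the polynomial-in-$|y|$ bound for the imaginary powers $(-\Delta_V)^{iy/2}$ on $L^p$: this is where the full strength of the Regular Potential Hypothesis enters, through the Gaussian heat-kernel estimates that power the spectral multiplier theorem for $-\Delta_V$.
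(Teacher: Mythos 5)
Your proposal is essentially the paper's argument: the paper also deduces the corollary by complex interpolation (of the imaginary-power families, citing \cite{DFVV}) from the $s=1$ Riesz-transform endpoint of Lemma \ref{987}, with the $s=0$ line trivial. The only divergence is the reverse inequality $\|(-\Delta_V)^{1/2}w\|_p\lesssim\|\nabla w\|_p$: you import a reverse Riesz-transform theorem from \cite{CMO} on the range $1<p<d$, whereas Remark \ref{re4.1} of the paper obtains the other direction more elementarily, via H\"older and Sobolev using the decay of $V$ (in effect the bound $\|\Delta_V f\|_p\le\|\Delta f\|_p+\|Vf\|_p\lesssim\|\Delta f\|_p$) followed by the same interpolation; if you keep your route, the validity of the reverse Riesz bound on all of $1<p<d$ (rather than just $1<p\le 2$ plus a duality range) is the one point you should justify more carefully.
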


\begin{remark}\label{re4.1}
Although Lemma \ref{987} only gives one direction of Corollary \ref{900}, the other direction of Corollary \ref{900} can be as well proved by complex interpolation with H\"older and Sobolev inequality due to the fact $V$ is regular.
For $d\ge5$, Corollary {\ref{900}} implies
$\|u\|_{\dot{S}^s} \sim \|(-\Delta_V)^\frac{s}2 u\|_{{S}^0}$. However, the two norms are not equivalent for $d\le 4$.
\end{remark}

\begin{lemma}\label{niudun}
For $V$ satisfying the assumptions in Theorem \ref{th1.2}, we have $\forall\, f\in {\dot H}^2$,
\begin{equation}\label{eq4.2}
\|\Delta f\|_2\sim  \|\Delta_V f\|_2.
\end{equation}
\end{lemma}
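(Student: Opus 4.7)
The plan is to prove the two inequalities $\|\Delta_V f\|_2 \lesssim \|\Delta f\|_2$ and $\|\Delta f\|_2 \lesssim \|\Delta_V f\|_2$ separately; the first is a direct consequence of H\"older and Sobolev, while the second I would establish by a weak-compactness/contradiction argument that exploits the hypothesis that $0$ is neither an eigenvalue nor a resonance of $-\Delta_V$.

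For the easy direction, I would write $\Delta_V f = \Delta f - Vf$ and estimate
\[
\|Vf\|_2 \leq \|V\|_{L^{d/2}}\|f\|_{L^{2d/(d-4)}} \lesssim \|V\|_{L^{d/2}}\|\Delta f\|_2,
\]
using H\"older followed by the Sobolev embedding $\dot H^2 \hookrightarrow L^{2d/(d-4)}$ (valid since $d\ge 7$); the decay hypothesis $\langle x\rangle^N V\in L^\infty$ with $N>d$ ensures $V\in L^{d/2}$, so that $\|\Delta_V f\|_2 \lesssim \|\Delta f\|_2$ follows immediately.

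For the reverse inequality I would argue by contradiction: suppose it fails and extract $\{f_n\}\subset \dot H^2$ with $\|\Delta f_n\|_2 = 1$ and $\|\Delta_V f_n\|_2\to 0$. After passing to a subsequence $f_n \rightharpoonup f$ weakly in $\dot H^2$. The crucial intermediate claim is that $Vf_n\to Vf$ strongly in $L^2$; to get this I would split $V=V_R + V_R^c$ with $V_R$ supported in $B_R$, use Rellich compactness on the ball (where $V$ is bounded) to kill the $V_R$ piece, and control the tail by
\[
\|V_R^c(f_n - f)\|_2 \leq \|V_R^c\|_{L^{d/2}} \|f_n - f\|_{L^{2d/(d-4)}} \lesssim \|V_R^c\|_{L^{d/2}},
\]
which is uniformly small once $R$ is taken large (by the fast decay of $V$). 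Combining $\Delta f_n \rightharpoonup \Delta f$ in $L^2$ with $Vf_n\to Vf$ and $\Delta_V f_n \to 0$ in $L^2$ forces $-\Delta_V f = 0$ for the weak limit; once I establish $f=0$, I recover $Vf_n\to 0$ in $L^2$ and then
\[
\|\Delta f_n\|_2 \leq \|\Delta_V f_n\|_2 + \|Vf_n\|_2 \to 0,
\]
contradicting $\|\Delta f_n\|_2 = 1$.

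The main obstacle will be justifying that the weak limit $f$ actually vanishes: since $f\in\dot H^2$ need not lie in $L^2$, I cannot invoke the absence of the eigenvalue $0$ directly and must instead use the no-resonance clause of the Regular Potential Hypothesis. Concretely, I would bootstrap via $-\Delta f = Vf$: because $|V|\lesssim \langle x\rangle^{-N}$ with $N>d$ and $f\in L^{2d/(d-4)}$, the right-hand side inherits enough spatial decay that convolution with the Newtonian kernel places $f$ in the weighted $L^2$ class ruled out by the combined eigenvalue/resonance hypothesis, so $f\equiv 0$.
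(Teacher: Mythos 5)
Your proposal is correct and follows essentially the same route as the paper: a contradiction/weak-compactness argument in which the decay of $V$ controls the tails, compactness of the Sobolev embedding on bounded domains handles the near region, and the weak limit $f$ solving $\Delta_V f=0$ is excluded by the no-zero-eigenvalue/no-resonance hypothesis after placing it in a weighted $L^2$ class. The only differences are organizational: the paper passes to the limit in the quadratic form $\|\Delta_V f_n\|_2^2$ and invokes weak lower semicontinuity of $\|\Delta f_n\|_2^2$ rather than proving $Vf_n\to Vf$ strongly in $L^2$, and it obtains $f\in L^2(\langle x\rangle^{-\sigma})$ directly from $f\in L^{2d/(d-4)}$ by H\"older, so the Newtonian-kernel bootstrap you anticipate as the main obstacle is not actually needed.
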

\begin{proof}
The Sobolev embedding ${\left\| f \right\|_{\frac{{2d}}{{d - 4}}}}\le C\left\| {\Delta f} \right\|_2$ and H\"older inequality yield
\begin{align}\label{apple}
\|\Delta_V f\|_2\le C\|\Delta f\|_2.
\end{align}
Thus it suffices to prove the inverse direction
\begin{align}\label{tree}
\|\Delta f\|_2\le C\|\Delta_V f\|_2.
\end{align}
We prove it by contradiction. Suppose that (\ref{tree}) is false, then there exists $\{f_n\} \subset \dot{H}^2$ such that
$$\left\| {\Delta {f_n}} \right\|_2 \ge  n     \left\| {{\Delta _V}{f_n}} \right\|_2.$$
Without loss of generality, we assume $\|\Delta f_n\|_2=1$. Then $\mathop {\lim }\limits_{n \to \infty } {\left\| {{\Delta _V}{f_n}} \right\|_2} = 0,$ i.e.,
\begin{align}\label{yess}
\mathop {\lim }\limits_{n \to \infty } \Big( \left\| {\Delta {f_n}} \right\|_2^2 - \left\langle {\Delta {f_n},V{f_n}} \right\rangle  - \left\langle {V{f_n},\Delta f_n} \right\rangle  + \left\| {V{f_n}} \right\|_2^2 \Big) = 0.
\end{align}
Since $\|f_n\|_{{\dot H}^2}$ is bounded, after extracting a subsequence, we may assume $f_n\rightharpoonup f_*$ weakly in ${\dot H}^2$.
We claim
\begin{align}\label{eq4.5new}
 \mathop {\lim }\limits_{n \to \infty } \left\langle {\Delta {f_n},V{f_n}} \right\rangle  = \left\langle {\Delta {f_ * },V{f_ * }} \right\rangle ,\quad
 \mathop {\lim }\limits_{n \to \infty } \left\| {V{f_n}} \right\|_2^2 = \left\| {V{f_ * }} \right\|_2^2.
\end{align}
Indeed, by integrating by parts, one has
\begin{align*}
\int_{{\Bbb R^d}} {\Delta {f_n}V{f_n}} \,\mathrm{d}x =  - \int_{{\Bbb R^d}} {\nabla {f_n}\cdot \nabla V{f_n}} \,\mathrm{d}x - \int_{{\mathbb{R}^d}} {V \nabla {f_n} \cdot \nabla {f_n}} \,\mathrm{d}x.
\end{align*}
For any $\varepsilon>0$, choosing $R>0$ sufficiently large, H\"older's inequality and Sobolev embedding give
\begin{align}
&\bigg| {\int_{\left| x \right| \ge R} {\nabla {f_n}\nabla V{f_n}} \,\mathrm{d}x} \bigg| \nonumber\\ &\le \frac{1}{R}  {\int_{\left| x \right| \ge R} {|\nabla {f_n}| \left| x \right||\nabla V|| {f_n}}| \,\mathrm{d}x}  \le \frac{1}{R}{\left\| {\nabla {f_n}} \right\|_{\frac{{2d}}{{d - 2}}}}{\left\| {{f_n}} \right\|_{\frac{{2d}}{{d - 4}}}}{\big\| {\left| x \right|\nabla V} \big\|_{\frac{d}{3}}} \nonumber\\
&\le \frac{1}{R}\left\| {\Delta {f_n}} \right\|_{_2}^2{\big\| {\left| x \right|\nabla V} \big\|_{\frac{d}{3}}} \lesssim \frac{1}{R} < \varepsilon.\label{kjhg}
\end{align}
Similarly we have
\begin{align}
&\left| {\int_{\left| x \right| \ge R} {{V^2}} {{\left| {{f_n}} \right|}^2}\,\mathrm{d}x} \right| \le \frac{1}{R}\left\| {\Delta {f_n}} \right\|_2^2{\big\| {\left| x \right|{V^2}} \big\|_{\frac{d}{4}}} \lesssim \frac{1}{R} < \varepsilon ,
\label{baobao}\\
&\left| {\int_{\left| x \right| \ge R} {V{{\left| {\nabla {f_n}} \right|}^2}\,\mathrm{d}x} } \right| \le \frac{1}{R}\left\| {\Delta {f_n}} \right\|_2^2{\big\| {\left| x \right|V} \big\|_{\frac{d}{2}}} \lesssim \frac{1}{R} < \varepsilon .
\label{wawa}
\end{align}
Since the Sobolev embedding is compact on bounded domains, by extracting a subsequence, together with (\ref{kjhg}), (\ref{baobao}) and (\ref{wawa}), we obtain
\begin{align*}
\int \nabla f_n\cdot \nabla V f_n \,\mathrm{d}x \to \int \nabla f_* \cdot \nabla V f_* \,\mathrm{d}x,\\
\int V \nabla f_n \cdot \nabla f_n \,\mathrm{d}x \to \int V \nabla f_* \cdot \nabla f_* \,\mathrm{d}x,\\
\|V f_n\|_{L^2}^2 \to \|V f_*\|_{L^2}^2, \quad \text{ as } n\to \infty.
\end{align*}
Then \eqref{eq4.5new} follows.
Therefore, we have proved
\begin{align}\label{tiype}
&\mathop {\lim }\limits_{n \to \infty }\Big(  - \left\langle {\Delta {f_n},V{f_n}} \right\rangle  - \left\langle {V{f_n},\Delta {f_n}} \right\rangle  + \left\| {V{f_n}} \right\|_2^2 \Big) \nonumber \\& =  - \left\langle {\Delta {f_ * },V{f_ * }} \right\rangle  - \left\langle {V{f_ * },\Delta {f_ * }} \right\rangle  + \left\| {V{f_ * }} \right\|_2^2.
\end{align}
Combining  (\ref{yess}) and (\ref{tiype}), with $\mathop {\liminf }\limits_{n \to \infty } \left\| {\Delta {f_n}} \right\|_2^2 \ge \left\| {\Delta {f_ * }} \right\|_2^2$,  we have
$$\left\| {\Delta {f_ * }} \right\|_2^2 - \left\langle {\Delta {f_ * },V{f_ * }} \right\rangle  - \left\langle {V{f_ * },\Delta {f_ * }} \right\rangle  + \left\| {V{f_ * }} \right\|_2^2 \le 0.$$
Hence we have ${\left\| {{\Delta _V}{f_ * }} \right\|_2} = 0.$ By H\"older inequality and $f_*\in L^{\frac{2d}{d-4}}$, there exists $\sigma>0$ sufficiently large such that $f_*\in L^2({\left\langle x \right\rangle ^{ - \sigma }})$. If $f_* \ne 0$, then we see $f_*$ is an eigenfunction of $-\Delta_V$ at zero when $f_*\in L^2$ or a resonance when $f_*\notin  L^2$.
Both of these two cases contradict with the assumption $(ii)$ in {the regular potential hypothesis}.
Hence $f_*=0$. Then
(\ref{yess}) and (\ref{tiype}) give
$$
\mathop {\lim }\limits_{n \to \infty } \left\| {\Delta {f_n}} \right\|_2 = 0,
$$
which contradicts with $\left\| {\Delta {f_n}} \right\|_2 = 1.$ Therefore we have shown \eqref{tree}. Thus \eqref{eq4.2} follows from \eqref{apple} and \eqref{tree}.
\end{proof}

Now we give the local wellposedness theorem, the existence of wave operator and stability theorem without proofs, since they are standard.
\begin{lemma}[{}{Local wellposedness}]\label{local}
For any $u_0\in {\dot H}^1$, there exists a unique maximal lifespan solution $u$ to \eqref{critical}, with $(T_{min}, T_{max})$ be the maximal existence time interval such that $u \in C_t^0 \dot{H}^1((T_{min},T_{max})\times \mathbb{R}^d)\bigcap {\dot S}^1(T_{min},T_{max})$. Moreover if $\|u_0\|_{{\dot H}^1}$ is sufficiently small, then (\ref{critical}) is globally well-defined with
$$
\|u\|_{{\dot S}^1(\mathbb{R}\times \mathbb{R}^d)}\le C\|u_0\|_{{\dot H}^1}.
$$
Suppose that $(T_{min},T_{max})$ is the lifespan of $u(t)$, then the energy
$$\mathcal{E}(u(t))=\|u(t)\|^2_{{\dot H}^1_V}+\frac{d-2}{2d}\int_{\Bbb R^d}|u|^{\frac{2d}{d-2}}\,\mathrm{d}x,
$$
is conserved in $(T_{min},T_{max})$.
\end{lemma}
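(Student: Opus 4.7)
The plan is to run a standard contraction-mapping argument on the Duhamel formulation
\[
u(t) = e^{it\Delta_V}u_0 - i\int_0^t e^{i(t-s)\Delta_V}\bigl(|u|^{\frac{4}{d-2}}u\bigr)(s)\,\mathrm{d}s,
\]
in the $\dot H^1$-level Strichartz space. The key trick for avoiding the non-commutativity of $\nabla$ with $e^{it\Delta_V}$ is to work with $(-\Delta_V)^{\frac12}$, which commutes with $e^{it\Delta_V}$ by the functional calculus, and then invoke Corollary \ref{900}: since $d\ge 7$, the norm $\|(-\Delta_V)^{\frac12}u\|_{S^0}$ is equivalent to $\|\nabla u\|_{S^0}$ for every admissible Strichartz pair, so controlling $(-\Delta_V)^{\frac12}u$ in $S^0$ is the same as controlling $u$ in $\dot S^1$.

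Concretely, I would fix the $\dot H^1$-admissible pair $(q,r)=\bigl(\tfrac{2(d+2)}{d-2},\tfrac{2d(d+2)}{d^2+4}\bigr)$ so that Sobolev embedding gives $\nabla u\in L_t^qL_x^r\Rightarrow u\in L_t^qL_x^{\frac{2d(d+2)}{(d-2)^2}}$, which is precisely the Strichartz space in which the nonlinearity lives at the critical level. On a time interval $I=[0,T]$ I would define
\[
X_I=\Bigl\{u : \|(-\Delta_V)^{\frac12}u\|_{S^0(I)}\le 2C\|u_0\|_{\dot H^1},\ \|\nabla u\|_{L_t^qL_x^r(I)}\le\eta\Bigr\}
\]
with a small $\eta$, apply Proposition \ref{1234} to $(-\Delta_V)^{\frac12}\Phi(u)$, and bound the nonlinearity by $|\nabla(|u|^{\frac{4}{d-2}}u)|\lesssim |u|^{\frac{4}{d-2}}|\nabla u|$ (valid since $\frac{4}{d-2}<1$ when $d\ge7$, but the fractional chain rule also suffices) together with Hölder in time and space to end up in the dual Strichartz norm. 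The linear estimate $\|e^{it\Delta_V}u_0\|_{L_t^qL_x^r}\lesssim\|u_0\|_{\dot H^1}$ and absolute continuity of the norm in $t$ let me choose $T$ small enough (depending on the profile of $u_0$, not only on its $\dot H^1$ norm) so that $\Phi$ is a contraction on $X_I$; for small $\|u_0\|_{\dot H^1}$ the same argument runs on $I=\mathbb R$, producing the global Strichartz bound. The maximal-lifespan/blow-up alternative is the usual consequence of the local theory.

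For the energy identity, I would proceed by approximation: regularize $u_0\in \dot H^1$ by $u_0^{(n)}\in H^2$, for which persistence of regularity together with Lemma \ref{niudun} (to trade $\Delta_V$ for $\Delta$ in $L^2$) gives $u^{(n)}\in C_t H^2$, making the direct computation $\tfrac{d}{dt}\mathcal E(u^{(n)}(t))=\mathrm{Re}\langle i\partial_t u^{(n)}-\Delta_V u^{(n)}+|u^{(n)}|^{\frac{4}{d-2}}u^{(n)},\partial_t u^{(n)}\rangle=0$ legitimate after an integration by parts. Continuity of the solution map in $\dot H^1\cap L^{\frac{2d}{d-2}}$ (a byproduct of the contraction estimate, since the difference equation satisfies the same linear-plus-nonlinear bounds) then transfers $\mathcal E(u(t))=\mathcal E(u_0)$ to all $\dot H^1$ data.

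The only genuinely delicate point is the nonlinear Strichartz estimate, where everything hinges on being able to pass $(-\Delta_V)^{\frac12}$ through the propagator and then turn it back into $\nabla$ inside $L_t^{\widetilde q'}L_x^{\widetilde r'}$; this is exactly what Corollary \ref{900} provides and the reason the argument is clean only for $d\ge 7$. In lower dimensions the norm equivalence fails at the required Strichartz exponent, so one would have to replace this step with a heat-semigroup characterization of $\dot H^1_V$, which is the technical complication the authors explicitly set aside.
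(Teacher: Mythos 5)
Your proposal is correct and is essentially the argument the paper has in mind when it states this lemma without proof: a fixed point for the Duhamel map in Strichartz spaces, with the non-commutativity of $\nabla$ and $e^{it\Delta_V}$ circumvented by working with $(-\Delta_V)^{\frac12}$ (which does commute with the propagator) and converting back to $\nabla$ via Corollary \ref{900} — legitimate at every exponent that arises, including the dual one $\frac{2d}{d+2}$ and the endpoint $\frac{2d}{d-2}<d$, precisely because $d\ge 7$ — followed by an $H^2$-regularization plus Lemma \ref{niudun} for the energy identity. Two small repairs: the Sobolev image of $\nabla u\in L_t^{\frac{2(d+2)}{d-2}}L_x^{\frac{2d(d+2)}{d^2+4}}$ is $L_{t,x}^{\frac{2(d+2)}{d-2}}$ (which is exactly the norm you need for the nonlinearity), not $L_t^qL_x^{\frac{2d(d+2)}{(d-2)^2}}$; and since $\frac{4}{d-2}<1$ for $d\ge 7$ the map $u\mapsto\nabla F(u)$ is only H\"older, not Lipschitz, so the contraction must be closed in a derivative-free metric (e.g.\ $\|u-v\|_{L_t^qL_x^{r^*}}$) on the set $X_I$, which is complete for that metric — a standard but necessary adjustment in high dimensions.
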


\begin{lemma}[{}{Existence of the wave operator} ]\label{wave}
For any $\varphi\in {\dot H}^1$, there exist positive constants $T_1,T_2>0$ and solution to (\ref{critical}) $u_1(t)$ defined on $[T_1,\infty)$, $u_2(t)$ defined on $(-\infty,-T_2]$, such that
$$\mathop {\lim }\limits_{t \to \infty } {\left\| {{u_1}(t) - {e^{it{\Delta _V}}}\varphi } \right\|_{{{\dot H}^1}}} = 0,\mbox{  }\mbox{  }\mathop {\lim }\limits_{t \to  - \infty } {\left\| {{u_2}(t) - {e^{it{\Delta _V}}}\varphi } \right\|_{{{\dot H}^1}}} = 0.
$$
\end{lemma}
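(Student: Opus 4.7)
The plan is to construct $u_1$ as the unique fixed point of the integral equation
\begin{equation*}
u_1(t)=e^{it\Delta_V}\varphi+i\int_t^\infty e^{i(t-\tau)\Delta_V}\bigl(|u_1|^{\frac{4}{d-2}}u_1\bigr)(\tau)\,\mathrm{d}\tau
\end{equation*}
on an interval $[T_1,\infty)$ with $T_1$ chosen sufficiently large; the construction of $u_2$ on $(-\infty,-T_2]$ is symmetric after reversing time. Any fixed point lying in $\dot S^1([T_1,\infty))$ automatically gives the scattering conclusion, since applying the inhomogeneous Strichartz estimate of Proposition \ref{1234} to the tail integral from $t$ to $\infty$ yields $\|u_1(t)-e^{it\Delta_V}\varphi\|_{\dot H^1}\to 0$ as $t\to\infty$.

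To produce the fixed point, fix an $\dot H^1$-admissible Strichartz pair $(q,r)$ with $q<\infty$ that controls the energy-critical nonlinearity. By Proposition \ref{1234}, together with the norm equivalence of Corollary \ref{900} (which, for $d\ge 7$, identifies $\|(-\Delta_V)^{1/2}u\|_{S^0}$ with $\|u\|_{\dot S^1}$), one has $e^{it\Delta_V}\varphi\in\dot S^1(\mathbb{R}\times\mathbb{R}^d)$ with norm $\lesssim\|\varphi\|_{\dot H^1}$, and hence by monotone convergence
\begin{equation*}
\bigl\|e^{it\Delta_V}\varphi\bigr\|_{L^q_t L^r_x([T_1,\infty)\times\mathbb{R}^d)}\longrightarrow 0\quad\text{as }T_1\to\infty.
\end{equation*}
Choose $T_1$ so large that this norm is at most a small absolute constant $\eta$, and work on the set
\begin{equation*}
X_{T_1}=\bigl\{u:\ \|u\|_{L^q_tL^r_x([T_1,\infty))}\le 2\eta,\ \|u\|_{\dot S^1([T_1,\infty))}\le 2C\|\varphi\|_{\dot H^1}\bigr\},
\end{equation*}
which is complete under the $L^q_tL^r_x$ metric. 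Standard nonlinear Strichartz estimates — H\"older in space and time, combined with the fractional chain rule applied to $|u|^{4/(d-2)}u$ — together with Proposition \ref{1234}, show that the map sending $u$ to the right-hand side of the integral equation above maps $X_{T_1}$ into itself and is a strict contraction on it. The unique fixed point is the desired $u_1$.

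The main technical point, and the reason the argument requires $d\ge 7$, arises in the nonlinear estimate: one must distribute a full derivative across $|u|^{4/(d-2)}u$ inside a Strichartz norm adapted to $\Delta_V$, yet $\nabla$ does not commute with $e^{it\Delta_V}$ and $(-\Delta_V)^{1/2}$ satisfies no Leibniz rule. Corollary \ref{900} resolves this by transferring every derivative to the flat $|\nabla|$, where the fractional chain rule is available, while still permitting Strichartz estimates for $e^{it\Delta_V}$ to be invoked on the linear side. Once the contraction closes, producing $u_1\in\dot S^1([T_1,\infty))$, the tail Strichartz estimate noted in the first paragraph finishes the proof, and the construction of $u_2$ is identical.
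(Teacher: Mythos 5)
Your construction is exactly the standard contraction-mapping proof of the wave operator for the energy-critical equation (small tail of the linear flow in a critical Strichartz norm, contraction in a metric without derivatives to sidestep the low regularity of $|u|^{4/(d-2)}u$ when $d\ge 7$, and Corollary \ref{900} to commute derivatives past $e^{it\Delta_V}$), which is precisely the "standard" argument the paper invokes when it states Lemma \ref{wave} without proof. The argument is correct; the only cosmetic points are that the vanishing of the linear tail is dominated (not monotone) convergence, and that for a full derivative the ordinary pointwise chain rule suffices in place of the fractional one.
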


\begin{lemma}[{}{Scattering norm} ]\label{scattering}
If $\|u\|_{L^{\frac{2(d+2)}{d-2}}_{t,x}((T_{min},T_{max}) \times \mathbb{R}^d)}<\infty$, then $(T_{min},$ $ T_{max}) = \mathbb{R}$ and $u$ scatters to $e^{it\Delta_V}u_+$ for some $u_+\in {\dot H}^1$.
If $T_{max}<\infty$, then $\|u\|_{L^{\frac{2(d+2)}{d-2}}_{t,x}([0,T_{max})\times {\Bbb R^d})}=\infty,$ a corresponding result holds if $T_{min} < \infty$.
\end{lemma}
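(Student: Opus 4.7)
The plan is to use the hypothesis to establish a finite $\dot S^1$ bound on the whole lifespan, from which both global existence and the existence of a scattering state follow; the final statement is then obtained as the contrapositive. Concretely, I would first set $M=\|u\|_{L^{2(d+2)/(d-2)}_{t,x}((T_{min},T_{max})\times\mathbb{R}^d)}$ and partition $(T_{min},T_{max})$ into $N=N(M,\eta)$ subintervals $I_j=[t_j,t_{j+1}]$ on each of which $\|u\|_{L^{2(d+2)/(d-2)}_{t,x}(I_j\times\mathbb{R}^d)}<\eta$ for a small $\eta$ to be chosen. On $I_j$ apply $(-\Delta_V)^{1/2}$ to the Duhamel formula (it commutes with $e^{it\Delta_V}$), take $S^0(I_j)$ norms, and invoke the Strichartz estimates of Proposition \ref{1234}. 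The norm equivalence of Corollary \ref{900}, valid for $d\ge 7$, lets me replace $(-\Delta_V)^{1/2}$ by $\nabla$ on the free term and on the forcing term, so that a fractional chain rule together with H\"older gives
\[
\big\|\nabla(|u|^{4/(d-2)}u)\big\|_{L^{q'}_tL^{r'}_x(I_j)}\lesssim \|u\|_{L^{2(d+2)/(d-2)}_{t,x}(I_j)}^{4/(d-2)}\,\|\nabla u\|_{L^q_tL^r_x(I_j)}
\]
for the appropriate dual Strichartz pair. Choosing $\eta$ small closes a bootstrap, yielding $\|\nabla u\|_{S^0(I_j)}\lesssim\|\nabla u(t_j)\|_{L^2}$, and iteration through the $N$ intervals delivers $\|\nabla u\|_{S^0((T_{min},T_{max})\times\mathbb{R}^d)}<\infty$.

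Next, suppose for contradiction that $T_{max}<\infty$. For any sequence $t_n\uparrow T_{max}$, the same Strichartz/chain-rule estimates applied on the shrinking end intervals $[t_n,T_{max})$ show that $\{e^{-it\Delta_V}u(t)\}$ is Cauchy in $\dot H^1$, hence $u(t)$ admits a limit in $\dot H^1$ as $t\uparrow T_{max}$. Restarting Lemma \ref{local} at this limit extends $u$ strictly past $T_{max}$, contradicting maximality; thus $T_{max}=\infty$, and symmetrically $T_{min}=-\infty$. For scattering, I would apply Strichartz to
\[
e^{-it_2\Delta_V}u(t_2)-e^{-it_1\Delta_V}u(t_1)=-i\int_{t_1}^{t_2}e^{-is\Delta_V}\big(|u|^{4/(d-2)}u\big)(s)\,\mathrm{d}s,
\]
obtaining a bound of exactly the nonlinear form controlled above; the global $\dot S^1$ bound together with the global finiteness of the scattering norm forces this to tend to $0$ as $t_1,t_2\to\infty$. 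Hence $e^{-it\Delta_V}u(t)\to u_+$ in $\dot H^1$, and $u_+$ is the scattering state. The last sentence of the lemma is then just the contrapositive of this implication.

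The main technical obstacle is not in any single step but in the bookkeeping of the non-commutativity between $\nabla$ and $e^{it\Delta_V}$: one cannot directly apply Strichartz to $\nabla u$ nor directly apply the product/chain rule to $(-\Delta_V)^{1/2}(|u|^{4/(d-2)}u)$. The device of alternating between $(-\Delta_V)^{1/2}$ (on the linear piece, to exploit commutativity with $e^{it\Delta_V}$) and $\nabla$ (on the nonlinear piece, to use the chain rule) via the equivalence of Corollary \ref{900} is precisely the point at which the assumption $d\ge 7$ is used; in lower dimensions that equivalence fails for the relevant Strichartz exponents and this clean bootstrap breaks down.
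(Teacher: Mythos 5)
Your argument is correct and is precisely the standard one the paper has in mind: the authors state this lemma without proof as "standard," and your use of the equivalence $\|(-\Delta_V)^{1/2}u\|_p\sim\|\nabla u\|_p$ to shuttle between the commuting operator $(-\Delta_V)^{1/2}$ and the chain-rule-friendly $\nabla$ is exactly the mechanism the paper itself identifies as the reason for the restriction $d\ge 7$. No gaps.
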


\begin{lemma}[Stability theorem]
Let $I\subseteq\Bbb R$ be an interval and let $t_0\in I$. Suppose that $\tilde u$ is defined on $I\times \Bbb R^d$ and satisfies
$\mathop {\sup }\limits_{t \in I} {\left\| {\tilde u} \right\|_{{{\dot H}^1_x}}} \le A $ and ${\left\| {\tilde u} \right\|_{L_{t,x}^{\frac{{2(d + 2)}}{{d - 2}}}(I\times \mathbb{R}) }} \le M$ for constants $M,A>0$.
Assume that
$$ i{\partial _t}\tilde u + {\Delta _V}\tilde u - {\left| {\tilde u} \right|^{\frac{4}{{d - 2}}}}\tilde u = e,$$
{for some function $e$.}
If
$${\left\| {{u_0} - \tilde u({t_0})} \right\|_{{{\dot H}_x^1}}} \le A', \mbox{   }{\left\| {\nabla e} \right\|_{L_t^2L_x^{\frac{{2d}}{{d + 2}}}}} \le \varepsilon ,\mbox{   }{\left\| {{e^{i(t - {t_0}){\Delta _V}}}({u_0} - \tilde u({t_0}))} \right\|_{{L_{t,x}^{\frac{{2(d + 2)}}{{d - 2}}}}}} \le \varepsilon ,$$
then there exists $\varepsilon_0$ depending on $M,A,A'$ and $d$ such that there exists a solution $u$ to (\ref{critical}) with $u(t_0)=u_0$, for $0<\varepsilon<\varepsilon_0$, with
${\left\| u \right\|_{{L_{t,x}^{\frac{{2(d + 2)}}{{d - 2}}}}\left( {I \times {\mathbb{R}^d}} \right)}}<C(M,A,A',d)$.
\end{lemma}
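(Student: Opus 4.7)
The plan is the classical ``subdivide and iterate'' stability argument. Set $w = u - \tilde u$, so that formally $w$ solves
\begin{equation*}
(i\partial_t + \Delta_V) w = F(\tilde u + w) - F(\tilde u) - e, \qquad w(t_0) = u_0 - \tilde u(t_0),
\end{equation*}
where $F(z) = |z|^{\frac{4}{d-2}} z$. Constructing $u$ on $I$ reduces to controlling $w$ in ${\dot S}^1(I)$ via the Duhamel formula, since $u = \tilde u + w$.

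First I would partition $I$ into $N = N(\eta, M)$ consecutive subintervals $\{I_j\}_{j=1}^{N}$ chosen so that $\|\tilde u\|_{L^{2(d+2)/(d-2)}_{t,x}(I_j)} \le \eta$, where $\eta$ is a small constant fixed in terms of $A$ and the Strichartz constants. Assume inductively that $\|w(t_j)\|_{\dot H^1} \le B_j$ and $\|e^{i(t-t_j)\Delta_V} w(t_j)\|_{L^{2(d+2)/(d-2)}_{t,x}(I_j)} \le \delta_j$. Applying Proposition \ref{1234} to the Duhamel formula for $w$, combined with Corollary \ref{900} to move derivatives through $e^{it\Delta_V}$ (needed because $\nabla$ does not commute with $\Delta_V$), yields
\begin{equation*}
\|w\|_{{\dot S}^1(I_j)} + \|w\|_{L^{2(d+2)/(d-2)}_{t,x}(I_j)} \lesssim B_j + \delta_j + \|\nabla e\|_{L^2_t L^{2d/(d+2)}_x} + \bigl\|\nabla\bigl[F(\tilde u + w) - F(\tilde u)\bigr]\bigr\|_{L^2_t L^{2d/(d+2)}_x(I_j)}.
\end{equation*}
For the nonlinear difference, since $d \ge 7$ forces the Hölder exponent $\tfrac{4}{d-2} < 1$, I would invoke the fractional chain rule for Hölder nonlinearities in the spirit of Visan, bounding the last term by a sum of expressions of the form $\bigl(\|\tilde u\|_{L^{2(d+2)/(d-2)}_{t,x}(I_j)}^{4/(d-2)} + \|w\|_{L^{2(d+2)/(d-2)}_{t,x}(I_j)}^{4/(d-2)}\bigr)\|w\|_{{\dot S}^1(I_j)}$. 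Choosing $\eta$ sufficiently small and running a continuity argument in the length of $I_j$ then closes to give $\|w\|_{{\dot S}^1(I_j)} \le C(B_j + \delta_j + \varepsilon)$.

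To propagate to the next interval, another Strichartz application bounds $B_{j+1}$ and $\delta_{j+1}$ by $C(M,A)(B_j + \delta_j + \varepsilon)$, so inductively $B_j + \delta_j \le C(M,A)^j (A' + \varepsilon)$. Since $N$ is determined by $M$ once $\eta$ is fixed, choosing $\varepsilon_0 = \varepsilon_0(M, A, A', d)$ so small that $C(M,A)^{N}\varepsilon_0 \ll 1$ produces the claimed a priori bound $\|u\|_{L^{2(d+2)/(d-2)}_{t,x}(I)} \le C(M,A,A',d)$; genuine existence of $u$ on $I$ then follows by combining Lemma \ref{local} with this a priori control.

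The main obstacle is the sub-unit Hölder exponent of the nonlinearity: for $d \ge 7$, $F$ is only $\tfrac{4}{d-2}$-Hölder at the origin and the classical Leibniz rule for $\nabla F(z)$ is unavailable. This forces both the use of the fractional chain rule for Hölder functions and a careful allocation of regularity across exotic Strichartz pairs. A secondary, purely technical difficulty is that $\nabla$ does not commute with $e^{it\Delta_V}$, so a bare $\nabla$ cannot be placed on the semigroup in Strichartz estimates; this is handled by using Corollary \ref{900} to interchange $\|\nabla \cdot\|$ with $\|(-\Delta_V)^{1/2} \cdot\|$ in every Lebesgue norm encountered above.
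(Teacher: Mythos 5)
The paper does not actually prove this lemma; it is stated ``without proof, since [it is] standard,'' the standard reference for $d\ge 7$ being the high-dimensional stability theory of Tao--Visan and Killip--Visan. Your overall architecture (set $w=u-\tilde u$, subdivide $I$ into $N(M,\eta)$ intervals where $\|\tilde u\|_{L^{2(d+2)/(d-2)}_{t,x}}\le\eta$, iterate Strichartz, and use Corollary \ref{900} to trade $\nabla$ for $(-\Delta_V)^{1/2}$ so that the derivative commutes with $e^{it\Delta_V}$) is the right skeleton and matches what the paper intends.

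There is, however, a genuine gap in the core nonlinear estimate. Writing schematically $\nabla\bigl[F(\tilde u+w)-F(\tilde u)\bigr]=F'(\tilde u+w)\nabla w+\bigl[F'(\tilde u+w)-F'(\tilde u)\bigr]\nabla\tilde u$, the first piece is indeed controlled by $\bigl(\|\tilde u\|^{4/(d-2)}+\|w\|^{4/(d-2)}\bigr)\|w\|_{\dot S^1}$ as you claim, but the second piece is $O\bigl(|w|^{4/(d-2)}|\nabla\tilde u|\bigr)$ and contributes a term of the form $\|w\|_{L^{2(d+2)/(d-2)}_{t,x}(I_j)}^{4/(d-2)}\,\|\tilde u\|_{\dot S^1(I_j)}$, i.e.\ a \emph{sublinear} power of the unknown multiplied by a coefficient controlled only by $A$ and $M$, not by $\eta$. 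Your displayed bound omits this term entirely. This is not cosmetic: a bootstrap inequality of the shape $x\le\delta+Cx^{\theta}$ with $\theta=4/(d-2)<1$ and $C$ large does not yield $x\lesssim\delta$ (the map has a fixed point of size $C^{1/(1-\theta)}$ independent of $\delta$), so the claimed conclusion $\|w\|_{\dot S^1(I_j)}\le C(B_j+\delta_j+\varepsilon)$ and the subsequent linear recursion $B_{j+1}+\delta_{j+1}\le C(M,A)(B_j+\delta_j+\varepsilon)$ do not follow from the argument as written. This is precisely the obstruction that forces Tao--Visan to run the iteration in an exotic Strichartz space at regularity $s<1$ (where every power of the unknown in the Duhamel estimate is at least one) and only afterwards upgrade to $\dot S^1$. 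You gesture at ``exotic Strichartz pairs'' in your closing remarks, but they never enter the actual iteration, so the key idea that makes the $d\ge 7$ case work is named but not used.
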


\section{Convergence lemmas and Linear profile decomposition}
In order to establish the linear profile decomposition, we need to give some estimates. First, we will recall the spectral multiplier theorem and the distorted Fourier transformation.

The following spectral multiplier theorem is proved in Proposition 5.2 in \cite{DP}.
\begin{proposition}\label{5.2}
Assume that $V\ge 0$ and $\sup \limits_x\int \frac{|V(y)|}{|x-y|^{d-2}}\,\mathrm{d}y <\infty$. Then for any $g\in C^{\infty}_c({\Bbb R})$, $\theta>0$, the operator $g( -\theta\Delta_V)$ is bounded on $L^p({\Bbb R}^d)$, $1\le p\le \infty$, with norm independent of $\theta$:
$$
\|g( -\theta\Delta_V)\|_{L(L^p;L^p)}\le C(p,d,g,V).
$$
\end{proposition}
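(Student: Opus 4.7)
The plan is to reduce the claim to a H\"ormander-type spectral multiplier theorem for self-adjoint operators enjoying Gaussian heat-kernel upper bounds, and to exploit the scale-invariance of the relevant H\"ormander norm in order to obtain the $\theta$-uniformity. As a preparatory step I would establish the Gaussian heat-kernel bound: since $V \ge 0$, the Trotter product formula (equivalently, the Feynman--Kac representation) yields the domination
\begin{equation*}
0 \le e^{t\Delta_V}(x,y) \le (4\pi t)^{-d/2}\, e^{-|x-y|^2/(4t)}, \quad t > 0,\ x,y\in\mathbb{R}^d,
\end{equation*}
and the hypothesis $\sup_x \int |V(y)|/|x-y|^{d-2}\,\mathrm{d}y < \infty$ is exactly the Kato class condition, which ensures that $-\Delta_V$ is a well-defined non-negative self-adjoint operator and that the associated semigroup extends boundedly to $L^p$ for every $1\le p\le\infty$.

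With the Gaussian bound at hand, I would invoke the general H\"ormander-type multiplier theorem for operators with such kernel bounds (in the spirit of Hebisch, Sikora--Wright, or Duong--Ouhabaz--Sikora): for any fixed bump $\eta\in C_c^\infty((0,\infty))$ not identically zero and any $s > d/2$,
\begin{equation*}
\|F(-\Delta_V)\|_{L^p\to L^p}\le C \sup_{t>0}\big\|\eta(\cdot)\,F(t\,\cdot)\big\|_{H^s(\mathbb{R})},\quad 1 < p < \infty,
\end{equation*}
for every bounded Borel $F$. Applying this with $F(\lambda) = g(\theta\lambda)$, the change of variables $\tau=\theta t$ gives
\begin{equation*}
\sup_{t>0}\big\|\eta(\cdot)\,g(\theta t\,\cdot)\big\|_{H^s} \;=\; \sup_{\tau>0}\big\|\eta(\cdot)\,g(\tau\,\cdot)\big\|_{H^s},
\end{equation*}
which is finite because $g\in C_c^\infty$ and, crucially, independent of $\theta$. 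This yields the $L^p$ bound for $1 < p < \infty$ with norm depending only on $p,d,g,V$.

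For the endpoints $p=1$ and $p=\infty$ I would fall back on pointwise kernel estimates. Finite speed of propagation of the wave equation for $\sqrt{-\Delta_V}$, together with the Gaussian heat-kernel bound, implies that for $g\in C_c^\infty(\mathbb{R})$ the integral kernel $K_\theta(x,y)$ of $g(-\theta\Delta_V)$ enjoys rapid decay in $|x-y|$ with constants whose $\theta$-dependence can be tracked by a scaling analysis; one obtains $\sup_x\int|K_\theta(x,y)|\,\mathrm{d}y + \sup_y\int|K_\theta(x,y)|\,\mathrm{d}x \le C$, and Schur's test closes the argument. The main obstacle throughout is precisely the $\theta$-uniformity: because $-\Delta_V$ is not scaling invariant---a spatial dilation of $x$ produces a non-trivially deformed potential---one cannot simply absorb $\theta$ into a change of variable on $\mathbb{R}^d$. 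The scale-invariant formulation of the H\"ormander multiplier norm above is the essential tool that bypasses this obstruction and delivers the uniform bound.
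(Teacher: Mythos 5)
The paper does not actually prove this proposition: it is quoted from Proposition 5.2 of D'Ancona--Pierfelice \cite{DP}, so there is no in-paper argument to compare against; your proposal supplies a proof where the authors supply a citation. Your route --- Gaussian heat-kernel domination from $V\ge 0$, then a scale-invariant H\"ormander-type multiplier theorem, then kernel bounds for the endpoints --- is the standard modern one and is correct in outline. Three remarks. First, the H\"ormander theorems of Hebisch/Duong--Ouhabaz--Sikora bound $\|F(\sqrt{L})\|_{p\to p}$ by $\sup_{t>0}\|\eta\, F(t\,\cdot)\|_{H^s}+|F(0)|$; since $g$ need not vanish at the origin you should carry the (harmless, $\theta$-independent) term $|F(0)|=|g(0)|$. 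Second, your endpoint step is where the real content lies and it actually supersedes the multiplier theorem: with $L=-\Delta_V$ and $R=\theta^{-1/2}$, the multiplier $\lambda\mapsto g(\theta\lambda^2)$ of $\sqrt{L}$ is smooth and supported in $[0,CR]$, and the standard kernel lemma for compactly supported multipliers of operators with Gaussian bounds gives $|K_\theta(x,y)|\le C_N R^d(1+R|x-y|)^{-N}$ with $C_N$ controlled by finitely many derivatives of $g$ and independent of $R$; Schur's test then gives uniform $L^1$ and $L^\infty$ bounds and interpolation gives all $1\le p\le\infty$. This makes the ``scaling analysis'' you allude to precise, and it does not require finite speed of propagation --- the complex-time heat-kernel argument from the Gaussian bound suffices. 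Third, your diagnosis of the main obstacle (that $\theta$ cannot be absorbed by dilating $\mathbb{R}^d$ because the potential deforms, so the uniformity must come from the scale invariance of the multiplier/kernel estimates) is exactly right.
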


In \cite{AS}, Alsholm and Schmidt proved the existence of distorted Fourier transformation. We briefly describe their results.
\begin{proposition}[Distorted Fourier transformation  ]\label{5.3}
Assume that $V$ is the potential in Theorem \ref{th1.2}, then there exists a function $\varphi(x,k)$ and a unitary operator $\mathcal{F}_V$ in $L^2$ defined by
$$
\left(\mathcal{F}_Vu\right)(k)=\int_{\Bbb R^d}u(x)\varphi(x,k)\,\mathrm{d}x.
$$
Moreover, $\|\mathcal{F}_V f\|_2=\|f\|_2$, $\big(\mathcal{F}_Vg(-\Delta_V)f\big)(k)=g(k^2)\big(\mathcal{F}_Vf\big)(k)$, where $g$ is some Borel function in $\Bbb R$.
\end{proposition}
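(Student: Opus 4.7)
My plan is to produce $\varphi(x,k)$ as the \emph{distorted plane wave}, i.e.\ a generalized eigenfunction of $-\Delta_V$ with eigenvalue $|k|^2$, constructed from the Lippmann--Schwinger integral equation. Concretely, I would look for $\varphi(x,k)=e^{ix\cdot k}+w(x,k)$ where $w(\cdot,k)$ solves
\begin{equation*}
w(x,k)=-\int_{\mathbb{R}^d} G_0^+(x-y,|k|)\,V(y)\bigl(e^{iy\cdot k}+w(y,k)\bigr)\,\mathrm{d}y,
\end{equation*}
with $G_0^+(\cdot,|k|)$ the outgoing Green's function for $-\Delta-|k|^2$. The decay hypothesis $\langle x\rangle^N V\in L^\infty$ with $N>d$, together with the absence of zero eigenvalues/resonances from (ii) of the Regular Potential Hypothesis, lets one solve this integral equation by a Fredholm / contraction argument on a weighted $L^2$ space; for $|k|$ bounded away from $0$ and $\infty$ one gets uniform estimates, and near $k=0$ the non-resonance assumption is exactly what rules out a nontrivial homogeneous solution.

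Next I would define $\mathcal{F}_V$ on the Schwartz class by the prescribed integral formula and verify the eigenfunction relation $(-\Delta_V)\varphi(\cdot,k)=|k|^2\varphi(\cdot,k)$ directly from the Lippmann--Schwinger equation. The functional-calculus identity $\mathcal{F}_V(g(-\Delta_V)f)(k)=g(|k|^2)(\mathcal{F}_V f)(k)$ then reduces, by the spectral theorem and density, to the identity $(-\Delta_V)f$ corresponds to $|k|^2\mathcal{F}_V f$, which is an integration by parts using the generalized eigenfunction equation and the decay of $\varphi-e^{ix\cdot k}$.

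The $L^2$--unitarity is the real work. The cleanest route is through Kato's smooth perturbation theory: under the decay and spectral assumptions on $V$ one obtains the limiting absorption principle for $-\Delta_V$ on weighted $L^2$ spaces, hence strong local $H$-smoothness of $V^{1/2}$, from which the wave operators
\begin{equation*}
W_\pm = \mathop{\mathrm{s\text{-}lim}}_{t\to\mp\infty} e^{-it\Delta_V}e^{it\Delta}
\end{equation*}
exist and are unitary (asymptotic completeness), since the spectrum is purely continuous. One then identifies $\mathcal{F}_V = \mathcal{F}\circ W_-^{\ast}$ by computing on plane waves, which gives the Parseval identity $\|\mathcal{F}_V f\|_2=\|f\|_2$ and surjectivity onto $L^2$ in one shot.

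The main obstacle is clearly this unitarity step, i.e.\ establishing asymptotic completeness: one must promote the pointwise construction of $\varphi(x,k)$ to a global spectral resolution, which forces one to invoke the limiting absorption principle, exclude embedded/threshold singularities (using $V\ge 0$ and decay to rule out positive eigenvalues via Kato, and hypothesis (ii) to exclude the threshold), and then run Kato's smoothness machinery. Everything else -- existence of $\varphi(x,k)$, the functional calculus identity, and the integral representation of $\mathcal{F}_V$ -- is essentially a perturbative computation once the spectral side is under control.
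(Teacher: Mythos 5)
The paper supplies no proof of this proposition: it is quoted directly from Alsholm and Schmidt \cite{AS}, who construct the generalized eigenfunction expansion for $-\Delta_V$ under decay hypotheses implied by the Regular Potential Hypothesis. Your sketch --- Lippmann--Schwinger construction of $\varphi(x,k)$, the limiting absorption principle with exclusion of embedded eigenvalues and threshold resonances, Kato smoothness giving existence and completeness of the wave operators, and the identification $\mathcal{F}_V=\mathcal{F}\circ W_-^{\ast}$ --- is precisely the standard argument that the cited reference (and the scattering-theory literature it rests on) carries out, so there is nothing in the paper itself to compare against.
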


\begin{lemma}\label{chen2}
For $V$ in Theorem \ref{th1.2}, $f\in {\dot H}^1$, we have $\forall\, \gamma >d$,
\begin{align}\label{N}
{\left\| {{\left\langle x \right\rangle ^{ - \gamma}}\nabla {e^{it{\Delta _V}}}f} \right\|_{L_{t,x}^2}} \le C\left\| {{e^{it{\Delta _V}}}f} \right\|_{L_t^2L_x^{\frac{{2d}}{{d - 4}}}}^{\frac{1}{3}}\left\| f \right\|_{{{\dot H}^1}}^{\frac{2}{3}}.
\end{align}
\end{lemma}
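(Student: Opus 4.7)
The plan is to prove \eqref{N} via a frequency-split argument. Set $u(t) = e^{it\Delta_V}f$; by Propositions \ref{5.2} and \ref{5.3}, for each dyadic $\Lambda > 0$ I decompose $u = P_{\le\Lambda}^{V}u + P_{>\Lambda}^{V}u$, where $P_{\le\Lambda}^{V} = \varphi(-\Delta_V/\Lambda^2)$ and $P_{>\Lambda}^{V} = I - P_{\le\Lambda}^{V}$. Proposition \ref{5.2} gives uniform $L^p$-boundedness of both projectors as well as of the Bernstein multiplier $(-\Delta_V)^{1/2}P_{\le\Lambda}^{V}/\Lambda$, which is of the form $\widetilde F(-\Delta_V/\Lambda^2)$ for a suitable smooth compactly supported $\widetilde F$. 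The strategy is to establish one bound in terms of the Strichartz-type norm on the right-hand side (controlling the low-frequency part) and a second in terms of $\|f\|_{\dot H^1}$ via a Kato-type local smoothing estimate (controlling the high-frequency part), then optimize the split point.

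For the low-frequency piece, $\gamma > d > 2$ gives $\langle x\rangle^{-\gamma}\in L^{d/2}_x$. H\"older in $x$, the norm equivalence $\|\nabla w\|_{2d/(d-4)}\sim \|(-\Delta_V)^{1/2}w\|_{2d/(d-4)}$ (Corollary \ref{900}), and the Bernstein bound $\|(-\Delta_V)^{1/2}P_{\le\Lambda}^{V}w\|_{2d/(d-4)} \lesssim \Lambda \|w\|_{2d/(d-4)}$ together yield
\begin{align*}
\big\|\langle x\rangle^{-\gamma}\nabla P_{\le\Lambda}^{V}u\big\|_{L^2_{t,x}} \lesssim \big\|\langle x\rangle^{-\gamma}\big\|_{L^{d/2}_x}\big\|\nabla P_{\le\Lambda}^{V}u\big\|_{L^2_tL^{2d/(d-4)}_x} \lesssim \Lambda \|u\|_{L^2_tL^{2d/(d-4)}_x}.
\end{align*}
For the high-frequency piece, I would invoke a Kato-type local smoothing estimate for $e^{it\Delta_V}$, namely $\|\langle x\rangle^{-s}|\nabla|^{1/2}e^{it\Delta_V}g\|_{L^2_{t,x}} \lesssim \|g\|_{L^2}$ for $s > 1/2$, which follows under hypotheses $(ii)$-$(iii)$ via the limiting absorption principle and standard weighted resolvent bounds for $-\Delta_V$. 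Applying it with $g = |\nabla|^{1/2}P_{>\Lambda}^{V}f$ and combining with $\||\nabla|^{1/2}P_{>\Lambda}^{V}f\|_{L^2} \lesssim \Lambda^{-1/2}\|f\|_{\dot H^1}$ produces
\begin{align*}
\big\|\langle x\rangle^{-\gamma}\nabla P_{>\Lambda}^{V}u\big\|_{L^2_{t,x}} \lesssim \Lambda^{-1/2}\|f\|_{\dot H^1}.
\end{align*}

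Adding these two estimates and minimizing the expression $\Lambda A + \Lambda^{-1/2}B$ with $A := \|u\|_{L^2_tL^{2d/(d-4)}_x}$ and $B := \|f\|_{\dot H^1}$ at $\Lambda = (B/(2A))^{2/3}$ gives $\|\langle x\rangle^{-\gamma}\nabla u\|_{L^2_{t,x}} \lesssim A^{1/3}B^{2/3}$, which is \eqref{N}. The main technical obstacle is the Kato-type smoothing bound for $e^{it\Delta_V}$: for $-\Delta$ it is classical, but for the perturbed operator one must exploit the absence of a zero eigenvalue/resonance together with the short-range decay of $V$ to obtain the weighted resolvent estimates that drive the $TT^*$ argument. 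The remaining Bernstein and $L^p$-boundedness properties of the Littlewood--Paley projectors adapted to $-\Delta_V$ are direct consequences of Proposition \ref{5.2} together with Corollary \ref{900}.
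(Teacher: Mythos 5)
Your overall architecture (spectral cutoff adapted to $-\Delta_V$, H\"older plus Bernstein and the norm equivalence for the low frequencies, a weighted local smoothing bound for the high frequencies, then optimization in the cutoff parameter) is exactly the paper's, and your low-frequency estimate and the final optimization are correct. The gap is entirely in the high-frequency step, in two respects. First, the smoothing estimate you invoke is the actual content of the lemma, and you have black-boxed it: the paper does not obtain it from a limiting absorption principle but proves it directly from the Morawetz identity with weight $a(x)=\langle x\rangle$, where the hypotheses $V\ge 0$ radial and $\partial_r V\le 0$ are used to guarantee $\nabla a\cdot\nabla V\le 0$ (and $\Delta^2 a\le 0$), yielding $\big\|\langle x\rangle^{-3/2}\nabla e^{it\Delta_V}h\big\|_{L^2_{t,x}}\lesssim \|h\|_{\dot H^{1/2}}$ after Hardy and interpolation. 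An LAP route is not implausible, but it is precisely the part that needs proof, and your version ignores the sign conditions on $V$ that the paper's argument hinges on.

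Second, even granting your estimate $\big\|\langle x\rangle^{-s}|\nabla|^{1/2}e^{it\Delta_V}g\big\|_{L^2_{t,x}}\lesssim\|g\|_{L^2}$, the substitution $g=|\nabla|^{1/2}P^V_{>\Lambda}f$ does not produce $\big\|\langle x\rangle^{-\gamma}\nabla P^V_{>\Lambda}u\big\|_{L^2_{t,x}}$ on the left: it produces $|\nabla|^{1/2}e^{it\Delta_V}|\nabla|^{1/2}P^V_{>\Lambda}f$, and $|\nabla|^{1/2}$ does not commute with $e^{it\Delta_V}$ (this non-commutativity of $\nabla$ with the perturbed flow is one of the central difficulties the paper flags in the introduction, and the norm equivalence of Corollary \ref{900} is an unweighted $L^p$ statement, so it cannot be used inside the weighted $L^2_{t,x}$ norm to repair this). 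The paper avoids the issue by stating the smoothing bound with a full gradient on the left and an $\dot H^{1/2}$ norm on the right, and by applying it to $h=[1-g(\rho^{-1}\sqrt{-\Delta_V})]f$, so that only the spectral cutoff — which does commute with $e^{it\Delta_V}$ — is moved past the propagator; the gain $\rho^{-1/2}\|f\|_{\dot H^1}$ is then extracted through the distorted Fourier transform of Proposition \ref{5.3}. To close your argument you would need either to prove the smoothing estimate in the covariant form with $\nabla$ (not $|\nabla|^{1/2}$ composed with the flow) on the left and $\dot H^{1/2}$ on the right, or to reproduce the paper's Morawetz computation.
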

\begin{proof}
We claim that for $f\in H^1$,
\begin{align}\label{Pkn}
{\left\| {{\left\langle x \right\rangle }^{ - \frac32}}\nabla {e^{it{\Delta _V}}}f \right\|_{L_{t,x}^2}} \le C{\left\| f \right\|_{{{\dot H}^{\frac12}}}}.
\end{align}
To verify (\ref{Pkn}), recall the Morawetz identity. Let $u$ be a solution to $i\partial_t u + \Delta_V u = 0$, for $a(x)$ sufficiently smooth, one has
$$
\partial_t\Im\int\nabla a\nabla u\bar{u}\,\mathrm{d}x =  2 \Re \int a_{jk}u_j\bar{u}_k\,\mathrm{d}x- \frac12\int|u|^2 \Delta^2 a \,\mathrm{d}x-\int|u|^2\nabla a \cdot \nabla V\,\mathrm{d}x.
$$
Taking $a(x) ={\left\langle x \right\rangle }$, it is easy to see
$$
{a_{jk}} = \frac{{{\delta _{jk}}}}{{\left\langle x \right\rangle }} - \frac{{{x_j}x{}_k}}{{{{\left\langle x \right\rangle }^3}}}, \mbox{  }
{\Delta ^2}{a } \le 0, \mbox{  }\nabla a\cdot \nabla V\le 0,
$$
where we have used $V$ is radial and $\partial_r V\le 0$.
Hence
$$\partial_t\Im\int\nabla a \cdot \nabla u\bar{u} \,\mathrm{d}x \ge \int {\left\langle x \right\rangle}^{-3}  |\nabla u(x)|^2  \,\mathrm{d}x.$$
Therefore, integrating in time, by Hardy's inequality and complex interpolation (see for instance Lemma A.10 of \cite{Tao}), we obtain (\ref{Pkn}). Now we prove (\ref{N}).
Take a cutoff function $g\in C^{\infty}_c({\Bbb R})$ such that $g(x)$ vanishes when $|x|>2$, and $g(x)$ equals one for $|x|<1$. For $\rho>0$, H\"older inequality, Corollary  \ref{900} and Proposition \ref{5.2} yield
\begin{align*}
&{\left\| {{\left\langle x \right\rangle ^{ - \gamma}}\nabla g\left( {{\rho ^{ - 1}}\sqrt { - {\Delta _V}} } \right){e^{it{\Delta _V}}}f} \right\|_{L_{t,x}^2}} \\
\lesssim & \  {\left\| {{\left\langle x \right\rangle ^{ - \gamma}}} \right\|_{{L_x^{\frac{d}{2}}}}}{\left\| {\nabla g\left( {{\rho ^{ - 1}}\sqrt { - {\Delta _V}} } \right){e^{it{\Delta _V}}}f} \right\|_{L_t^2L_x^{\frac{2d}{d - 4}}}} \\
\lesssim  & \ {\left\| {{\left\langle x \right\rangle ^{ - \gamma}}} \right\|_{{L_x^{\frac{d}{2}}}}}{\left\| {{{\left( { - {\Delta _V}} \right)}^{\frac12}}g\left( {{\rho ^{ - 1}}\sqrt { - {\Delta _V}} } \right){e^{it{\Delta _V}}}f} \right\|_{L_t^2L_x^{\frac{2d}{d - 4}}}} \\
\lesssim  & \ \rho {\left\| {{\left\langle x \right\rangle ^{ - \gamma}}} \right\|_{{L_x^{\frac{d}{2}}}}}{\left\| {{e^{it{\Delta _V}}}f} \right\|_{L_t^2{L^{\frac{2d}{d - 4}}}}}.
\end{align*}
Meanwhile, Proposition \ref{5.3} and \eqref{Pkn} indicate
\begin{align*}
& {\left\| {{{\left\langle x \right\rangle }^{ - \gamma}}\nabla {e^{it{\Delta _V}}}\left[1 - g\left( {{\rho ^{ - 1}}\sqrt { - {\Delta _V}} } \right)\right]f} \right\|_{L_{t,x}^2}}\\
\lesssim &\   {\left\| {{{\left\langle x \right\rangle }^{ - \frac32}}\nabla {e^{it{\Delta _V}}}\left[1 - g\left( {{\rho ^{ - 1}}\sqrt { - {\Delta _V}} } \right)\right]f} \right\|_{L_{t,x}^2}} \\
\lesssim  & \ {\left\| {\left[1 - g\left( {{\rho ^{ - 1}}\sqrt { - {\Delta _V}} } \right)\right]f} \right\|_{{{\dot H}^{\frac{1}{2}}}}} \\
\lesssim & \  {\left\| {\left[1 - g\left( {{\rho ^{ - 1}}\sqrt { - {\Delta _V}} } \right)\right]{{( - {\Delta _V})}^{\frac14}}f} \right\|_{L_x^2}} \\
\lesssim & \ {\left\| {{\mathcal{F}_V}\left( {\left[1 - g\left({\rho ^{ - 1}}\sqrt { - {\Delta _V}} \right)\right]{{( - {\Delta _V})}^{\frac{1}{4}}}f} \right)} \right\|_{L_k^2}} \\
\lesssim & \  {\left\| {\left[1 - g\left({\rho ^{ - 1}}k\right)\right]{k^{\frac{1}{2}}}{\mathcal{F}_V}f(k)} \right\|_{L_k^2}} \\
\lesssim & \ {\rho ^{ - \frac{1}{2}}}{\left\| {k{\mathcal{F}_V}f(k)} \right\|_{L_k^2}} \lesssim {\rho ^{ - \frac{1}{2}}}{\left\| {{\mathcal{F}_V}\left( {\sqrt { - {\Delta _V}} f} \right)} \right\|_{L_k^2}} \\
\lesssim & \  {\rho ^{ - \frac{1}{2}}}{\left\| {\sqrt { - {\Delta _V}} f} \right\|_{L_x^2}} \lesssim {\rho ^{ - \frac{1}{2}}}{\left\| f \right\|_{{{\dot H}^1}}}.
\end{align*}
Therefore (\ref{N}) follows by choosing $\rho$ appropriately.
\end{proof}

Lemma \ref{chen2} can be used to prove the following corollary, which is important in proving the existence of the critical element.
\begin{corollary}\label{chen3}
If $f_n$ is bounded in ${\dot H}^1$, $\mathop {\lim }\limits_{n \to \infty } {\left\| {{e^{it\Delta_V }}{f_n}} \right\|_{L_t^2L^{\frac{2d}{d - 4}}_x}} = 0$, then for $V$ in Theorem \ref{th1.2}, we have
$$\mathop {\lim }\limits_{n \to \infty } {\left\| {{e^{it{\Delta}}}{f_n}} \right\|_{L^2_tL^{\frac{2d}{d-4}}_x}} = 0.$$
\end{corollary}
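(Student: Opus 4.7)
The strategy is to use the Duhamel identity to rewrite $e^{it\Delta}$ in terms of $e^{it\Delta_V}$ and show the difference vanishes in the target norm. Writing $u_n(t)=e^{it\Delta_V}f_n$, the identity $(i\partial_t+\Delta)u_n=Vu_n$ gives
$$
e^{it\Delta}f_n=e^{it\Delta_V}f_n+i\int_0^t e^{i(t-s)\Delta}\bigl(Ve^{is\Delta_V}f_n\bigr)\,\mathrm{d}s.
$$
The first term vanishes in $L^2_tL^{2d/(d-4)}_x$ by hypothesis, so it suffices to bound the Duhamel integral. Combining the Sobolev embedding $\dot W^{1,2d/(d-2)}\hookrightarrow L^{2d/(d-4)}$ (valid since $d\ge7$), the commutativity of $\nabla$ with $e^{it\Delta}$, and the standard inhomogeneous Strichartz estimate for the free Schr\"odinger flow with dual admissible pair $(2,2d/(d+2))$, this integral is controlled by $\|\nabla(Ve^{it\Delta_V}f_n)\|_{L^2_tL^{2d/(d+2)}_x}$.

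The product rule splits this into $(\nabla V)\,e^{it\Delta_V}f_n$ and $V\nabla e^{it\Delta_V}f_n$. The first piece is immediate: H\"older gives
$$
\|(\nabla V)\,e^{it\Delta_V}f_n\|_{L^2_tL^{2d/(d+2)}_x}\le \|\nabla V\|_{L^{d/3}_x}\,\|e^{it\Delta_V}f_n\|_{L^2_tL^{2d/(d-4)}_x}\to 0,
$$
since $\langle x\rangle^N|\nabla V|\in L^\infty$ with $N>d$ places $\nabla V$ in $L^{d/3}$.

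The term $V\nabla e^{it\Delta_V}f_n$ is the real obstacle, because $\|\nabla e^{it\Delta_V}f_n\|_{L^2_tL^{2d/(d-2)}_x}$ is only bounded (via Strichartz together with Corollary \ref{900}), not small. To extract smallness I would split $V=V\chi_{|x|\le R}+V\chi_{|x|>R}$. For the tail, H\"older and Strichartz give
$$
\|V\chi_{|x|>R}\nabla e^{it\Delta_V}f_n\|_{L^2_tL^{2d/(d+2)}_x}\lesssim \|V\chi_{|x|>R}\|_{L^{d/2}_x}\|f_n\|_{\dot H^1},
$$
uniformly small in $n$ as $R\to\infty$ because the decay of $V$ gives $V\in L^{d/2}$. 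For the compactly supported piece, the pointwise bound $\chi_{|x|\le R}\le \langle R\rangle^\gamma\langle x\rangle^{-\gamma}$ together with H\"older (placing $\chi_{|x|\le R}\in L^d_x$) controls it, for any $\gamma>d$, by $C(R,V,\gamma)\|\langle x\rangle^{-\gamma}\nabla e^{it\Delta_V}f_n\|_{L^2_{t,x}}$, which by Lemma \ref{chen2} is bounded by $C(R,V,\gamma)\|e^{it\Delta_V}f_n\|_{L^2_tL^{2d/(d-4)}_x}^{1/3}\|f_n\|_{\dot H^1}^{2/3}\to 0$ for each fixed $R$. Sending $n\to\infty$ first and then $R\to\infty$ closes the argument.

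The main conceptual point is that the smallness hypothesis lives in a scale-invariant Strichartz norm of $e^{it\Delta_V}f_n$ itself, whereas the Duhamel error involves the gradient of $e^{it\Delta_V}f_n$ localized by $V$. Lemma \ref{chen2} is precisely the device that converts smallness in the former into smallness in a weighted $L^2_{t,x}$ norm of the latter, while the spatial decay of $V$ is exploited through the cutoff-and-tail decomposition to restrict attention to a bounded region where the weighted estimate is effective.
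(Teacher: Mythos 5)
Your proof is correct and follows essentially the same route as the paper: Duhamel for $e^{it\Delta}f_n-e^{it\Delta_V}f_n$, inhomogeneous Strichartz reducing matters to $\|\nabla(Vh_n)\|_{L^2_tL^{2d/(d+2)}_x}$, H\"older with $\nabla V\in L^{d/3}$ for the first piece, and Lemma \ref{chen2} for the $V\nabla h_n$ piece. The only difference is cosmetic: the paper closes that last term in one step via $\|\langle x\rangle^\gamma V\|_{L^d_x}\|\langle x\rangle^{-\gamma}\nabla h_n\|_{L^2_{t,x}}$, whereas you insert a cutoff at radius $R$ and take two limits; both work.
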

\begin{proof}
It suffices to prove
$$\mathop {\lim }\limits_{n \to \infty } {\left\| {{e^{it\Delta }}{f_n} - {e^{it{\Delta _V}}}{f_n}} \right\|_{L^2_tL^{\frac{2d}{d-4}}_x}} = 0.
$$
Let ${h_n} = {e^{it{\Delta _V}}}{f_n}$, ${g_n} = {e^{it\Delta }}{f_n} - {e^{it{\Delta _V}}}{f_n}$, then
$${g_n}(t,x) = i\int_0^t {{e^{i(t - s)\Delta }}V(x){h_n}} (s,x)\,\mathrm{d}s.$$
Strichartz estimate, H\"older inequality and Lemma \ref{chen2} give
\begin{align*}
 {\left\| {{g_n}} \right\|_{L^2_tL^{\frac{2d}{d-4}}_x}} &\le C{\left\| {\nabla V{h_n}} \right\|_{L_t^2L_x^{\frac{{2d}}{{d + 2}}}}} + C{\left\| {V\nabla {h_n}} \right\|_{L_t^2L_x^{\frac{{2d}}{{d + 2}}}}} \\
 &\le C{\left\| {\nabla V} \right\|_{{L_x^{\frac{d}{3}}}}}{\left\| {{h_n}} \right\|_{L_t^2L_x^{\frac{{2d}}{{d - 4}}}}} + C{\left\| {{{\left\langle x \right\rangle }^\gamma}V} \right\|_{{L_x^d}}}{\left\| {{{\left\langle x \right\rangle }^{ - \gamma}}\nabla {h_n}} \right\|_{L_t^2L_x^2}} \\
 &\le C{\left\| {\nabla V} \right\|_{{L_x^{\frac{d}{3}}}}}{\left\| {{h_n}} \right\|_{L_t^2L_x^{\frac{{2d}}{{d - 4}}}}} + C{\left\| {{{\left\langle x \right\rangle }^\gamma}V} \right\|_{{L_x^d}}}\left\| {{h_n}} \right\|_{L_t^2L_x^{\frac{{2d}}{{d - 4}}}}^{\frac{1}{3}}\left\| {{f_n}} \right\|_{{{\dot H}^1}}^{\frac{2}{3}} \\
 &\le C{\left\| {{h_n}} \right\|_{L_t^2L_x^{\frac{{2d}}{{d - 4}}}}} + C\left\| {{h_n}} \right\|_{L_t^2L_x^{\frac{{2d}}{{d - 4}}}}^{\frac{1}{3}},
\end{align*}
thus finishing our proof.
\end{proof}

The following approximate results are essential in proving the existence of the critical element in Lemma \ref{kk8}.
Let $L(\lambda)=\Delta-\lambda^2V(\lambda x)$, and $V_\lambda(x) = \lambda^2V(\lambda x)$.
\begin{proposition}\label{888}
For $f\in {\dot H}^1$, it holds
\begin{align}
\mathop{\lim}\limits_{\lambda \to 0}\|e^{itL(\lambda)}f-e^{it\Delta}f\|_{{\dot S}^1}&=0.\label{1.111}\\
\mathop{\lim}\limits_{\lambda \to \infty}\|e^{itL(\lambda)}f-e^{it\Delta}f\|_{{\dot S}^1}&=0.\label{2.111}
\end{align}
\end{proposition}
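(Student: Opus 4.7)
The plan is to combine Duhamel's formula with Strichartz and H\"older estimates, exploiting the fact that $V_\lambda$ and $\nabla V_\lambda$ have vanishing $L^p$ norms for suitable exponents $p$ in each of the two limit regimes.

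First, I would observe that the rescaling $u(t,x)\mapsto \lambda^{d/2}u(\lambda^2 t,\lambda x)$ conjugates the evolution $e^{itL(\lambda)}$ with $e^{i\lambda^2 t\Delta_V}$ via the $L^2$-unitary dilation. Consequently, both the Strichartz estimates for $L(\lambda)$ and the norm equivalence $\|\nabla u\|_{L^p}\sim\bigl\|(-L(\lambda))^{\frac12}u\bigr\|_{L^p}$ (the analogue of Corollary \ref{900}) hold with constants \emph{independent of} $\lambda$, since the Lebesgue and Sobolev norms involved are invariant under the rescaling. In particular $\|e^{itL(\lambda)}f\|_{\dot S^1}+\|e^{it\Delta}f\|_{\dot S^1}\le C\|f\|_{\dot H^1}$ uniformly in $\lambda$, and a three-epsilon argument reduces the claim to $f\in C_c^\infty(\mathbb{R}^d)$, for which all positive-order fractional Sobolev norms are finite.

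Next, starting from the Duhamel identity
\[
e^{itL(\lambda)}f-e^{it\Delta}f=-i\int_0^t e^{i(t-s)L(\lambda)}V_\lambda e^{is\Delta}f\,\mathrm{d}s,
\]
applying endpoint Strichartz for $L(\lambda)$ together with the uniform norm equivalence, the proposition reduces to showing
\[
\bigl\|(\nabla V_\lambda)e^{it\Delta}f\bigr\|_{L^2_tL_x^{\frac{2d}{d+2}}}+\bigl\|V_\lambda\nabla e^{it\Delta}f\bigr\|_{L^2_tL_x^{\frac{2d}{d+2}}}\longrightarrow 0.
\]
Each piece is then handled by H\"older in space with conjugate exponents $\frac1p+\frac1q=\frac{d+2}{2d}$, combined with the $\dot H^s$-level Strichartz estimate $\|e^{it\Delta}f\|_{L^2_tL_x^q}\lesssim\|f\|_{\dot H^s}$ for the corresponding value $s=d/p-2$. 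The scaling identities
\[
\|\nabla V_\lambda\|_{L^p}=\lambda^{3-d/p}\|\nabla V\|_{L^p},\qquad \|V_\lambda\|_{L^p}=\lambda^{2-d/p}\|V\|_{L^p},
\]
then prescribe the choice of $p$: for the first (resp.\ second) term one picks $p$ just above $d/3$ (resp.\ $d/2$) when $\lambda\to 0$, yielding positive scaling exponents, and just below $d/3$ (resp.\ $d/2$) when $\lambda\to\infty$, yielding negative scaling exponents. In either regime the scaling prefactor tends to zero while $\|f\|_{\dot H^s}$ stays finite for $s$ in a small neighbourhood of $1$, thanks to the density reduction.

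The main obstacle is establishing the $\lambda$-uniformity of the Strichartz estimates and the norm equivalence for $L(\lambda)$. Both rest on the rescaling observation above: the $L^2$-unitary dilation that intertwines $L(\lambda)$ with $\Delta_V$ leaves Strichartz pairs and the Sobolev pairings appearing in Corollary \ref{900} invariant, which transfers the fixed-$V$ estimates to $L(\lambda)$ with a $\lambda$-independent constant. Once this uniformity is secured, the remainder of the argument is a bookkeeping of H\"older and Strichartz exponents along the lines indicated above.
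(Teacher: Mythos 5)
Your overall architecture --- uniform-in-$\lambda$ Strichartz bounds and norm equivalence obtained from the dilation intertwining $L(\lambda)$ with $\lambda^2\Delta_V$, reduction to $f\in C^\infty_c$ by density, Duhamel, and H\"older against the scaling of $\|V_\lambda\|_{L^p}$ and $\|\nabla V_\lambda\|_{L^p}$ --- is essentially the paper's, up to the cosmetic difference that you place the perturbed propagator outside the Duhamel integral while the paper places the free one outside. However, there is a genuine gap in the $\lambda\to 0$ regime for the term $\|V_\lambda\nabla e^{it\Delta}f\|_{L^2_tL_x^{2d/(d+2)}}$. To extract a positive power of $\lambda$ from $\|V_\lambda\|_{L^p}=\lambda^{2-d/p}\|V\|_{L^p}$ you must take $p>\tfrac d2$; the H\"older relation $\tfrac1p+\tfrac1q=\tfrac12+\tfrac1d$ then forces $q<\tfrac{2d}{d-2}$, so the pair $(2,q)$ lies strictly below the admissible line and the ``corresponding $\dot H^s$-level Strichartz estimate'' you invoke has $s=\tfrac dp-2<0$. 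No such global-in-time estimate exists: the inequality $\|e^{it\Delta}g\|_{L^2_tL^q_x}\lesssim\|g\|_{\dot H^s}$ is invariant under Galilean boosts $g\mapsto e^{ix\cdot\xi_0}g$, which leave the left side unchanged while sending $\|g\|_{\dot H^s}\to 0$ as $|\xi_0|\to\infty$ when $s<0$. So for this one term the pure H\"older-plus-global-Strichartz bookkeeping cannot close. (The term $(\nabla V_\lambda)e^{it\Delta}f$ is fine in both regimes, and the whole $\lambda\to\infty$ case is fine, since there the required pairs remain admissible with $s\ge 0$.)

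This is precisely where the paper does extra work: for the analogous term $\|V_\lambda\nabla u_2\|_{L^2_tL_x^{2d/(d+2)}}$ it splits time into $|t|\le 1$ and $|t|\ge 1$, bounds the first piece by $\|V_\lambda\|_{L^d}\|\nabla u_2\|_{L^\infty_tL^2_x}\lesssim\lambda$, and for the tail combines H\"older with the pointwise-in-time dispersive estimate at an exponent $\tilde q\in(\tfrac{2d}{d-1},\tfrac{2d}{d-2})$, for which the decay rate $t^{-\frac d2(1-\frac2{\tilde q})}$ is square-integrable on $\{|t|\ge1\}$; this yields $o(\lambda)$ for compactly supported smooth data. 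The quantity $\|\nabla e^{it\Delta}g\|_{L^2_tL^q_x}$ with $q<\tfrac{2d}{d-2}$ is indeed finite for $g\in C^\infty_c$, but it is not controlled by any Sobolev norm of $g$ and must be established by exactly this kind of time-localization and dispersive-decay argument; your proposal needs this ingredient to close.
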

\begin{proof}
Since we have
\begin{align}
e^{itL(\lambda)}f(x)= \Big(e^{it\lambda^2\Delta_V}f\Big(\frac{\cdot}{\lambda}\Big)\Big)(\lambda x),\label{imp}
\end{align}
then Corollary \ref{900} with $s =1$, $p=2$ gives
\begin{align}
 \|e^{itL(\lambda)}f\|_{{\dot H}^1}& = \lambda^{1-\frac{d}{2}}\Big\|e^{it\lambda^2\Delta_V}f\Big(\frac{x}{\lambda}\Big)\Big\|_{{\dot H}^1}\nonumber\\
&\le C\lambda^{1-\frac{d}{2}}\Big\|f\Big(\frac{x}{\lambda}\Big)\Big\|_{{\dot H}^1}{}{ \le C\|f\|_{{\dot H}^1}}.\label{sa}
\end{align}
Similarly, by Lemma \ref{niudun} and (\ref{imp}), for $f\in {\dot H}^2$, we have
$$
\|e^{itL(\lambda)}f\|_{{\dot H}^2}\le C\|f\|_{{\dot H}^2}.
$$
Denote $u_1(t,x)=e^{itL(\lambda)}f(x) $ thus we have proved $\|u_1\|_{L^{\infty}_t{\dot H}^1_x}\lesssim\|f\|_{{\dot H}^1_x}$.
For each $\varepsilon>0$, take a test function $g\in C^{\infty}_c$ such that $\|f-g\|_{{\dot H}^1}<\varepsilon$. Denote $u_2 = e^{itL(\lambda)}g$. Then by (\ref{imp}) and Strichartz estimates, it is direct to verify
\begin{align}
&\|u_1-u_2\|_{{\dot H}^1}<C\varepsilon,  \mbox{  }\|u_1-u_2\|_{L^{2}_tL^{\frac{2d}{d-4}}_x}<C\varepsilon, \mbox{  }\|\nabla(u_1-u_2)\|_{L^{2}_tL^{\frac{2d}{d-2}}_x}<C\varepsilon,\nonumber\\
&\|u_2\|_{L^2_tL^{\frac{2d}{d-2}}_x}\le C\|g\|_2, \mbox{  }\|\nabla u_2\|_{L^2_tL^{\frac{2d}{d-2}}_x}\le C\|\nabla g\|_2, \mbox{  }\|\Delta u_2\|_{L^2_tL^{\frac{2d}{d-2}}_x}\le C\|\Delta g\|_2.\label{tt}
\end{align}
Let $v(t,x)=e^{itL(\lambda)}f-e^{it\Delta}f$, then $v$ satisfies
\begin{align}\label{kool}
v(t,x)=i\int^t_0e^{i(t-s)\Delta}V_{\lambda}u_1(s,x)\,\mathrm{d}s.
\end{align}
Hence by Strichartz estimates, (\ref{tt}) and H\"older inequality, we deduce
\begin{align}
&\|v\|_{{\dot S}^1}\\&\le C \|\nabla(V_{\lambda}u_1)\|_{L^2_tL^{\frac{2d}{d+2}}_x}\nonumber\\
&\le  C \|\lambda^3(\nabla V)(\lambda x)u_1\|_{L^2_tL^{\frac{2d}{d+2}}_x}+ C \|V_{\lambda}\nabla u_1\|_{L^2_tL^{\frac{2d}{d+2}}_x}\nonumber\\
&\le C \|\lambda^3(\nabla V)(\lambda x)(u_1-u_2)\|_{L^2_tL^{\frac{2d}{d+2}}_x}+ C \|V_{\lambda}(\nabla u_1-\nabla u_2)\|_{L^2_tL^{\frac{2d}{d+2}}_x}\nonumber\\
&+ C  \|\lambda^3(\nabla V)(\lambda x)u_2\|_{L^2_tL^{\frac{2d}{d+2}}_x} + C \|V_{\lambda}\nabla u_2\|_{L^2_tL^{\frac{2d}{d+2}}_x}\nonumber\\
&\le  C \varepsilon\|\nabla V\|_{L^{\frac{d}3}} + C \varepsilon\|V\|_{L^{\frac{d}2}}+ C \|\lambda^3(\nabla V)(\lambda x)u_2\|_{L^2_tL^{\frac{2d}{d+2}}_x} +C \|V_{\lambda}\nabla u_2\|_{L^2_tL^{\frac{2d}{d+2}}_x} .\label{000}
\end{align}
First, we consider $\lambda\to0$.
(\ref{tt}) and H\"older inequality yield
\begin{align*}
\|\lambda^3(\nabla V)(\lambda x)u_2\|_{L^2_tL^{\frac{2d}{d+2}}_x}&\le C\lambda^3\|(\nabla V)(\lambda x)\|_{L_x^{\frac{d}2}}\|u_2\|_{L^2_tL^{\frac{2d}{d-2}}_x}\\
&\le C\lambda\|\nabla V\|_{L_x^{\frac{d}2}}\|g\|_2.
\end{align*}
Hence it suffices to show
\begin{align}\label{tt6}
\mathop{\lim}\limits_{\lambda\to 0}\|V_\lambda\nabla u_2\|_{L^2_tL^{\frac{2d}{d+2}}_x}=0.
\end{align}
Splitting the time interval $\Bbb R$ into two parts, by H\"older inequality, we have
\begin{align*}
\|V_\lambda\nabla u_2\|_{L^2_tL^{\frac{2d}{d+2}}_x}&\le\|V_{\lambda}\nabla u_2\|_{L^2_t(|t|\le1)L^{\frac{2d}{d+2}}_x}+\|V_{\lambda}\nabla u_2\|_{L^2_t(|t|\ge1)L^{\frac{2d}{d+2}}_x}\\
&\le \|V_{\lambda}\nabla u_2\|_{L^{\infty}_tL^{\frac{2d}{d+2}}_x}+\|V_{\lambda}\|_{L^q}\|\nabla u_2\|_{L^2_t(|t|\ge1)L^{\tilde{q}}_x}\\
&\equiv I+II,
\end{align*}
where $\frac{1}{q}+\frac{1}{\tilde{q}}=\frac{d+2}{2d}$ and $\tilde{q}\in(\frac{2d}{d-1},\frac{2d}{d-2})$.
$I$ is easy to handle:
$$I\le \|V_\lambda\|_{L^d}\|\nabla u_2\|_{L^{\infty}_tL^2_x}\le \lambda\|f\|_{{\dot H}^1}.
$$
By Proposition \ref{1234}, Corollary \ref{900} and (\ref{imp}), we obtain
\begin{align*}
\|\nabla u_2\|_{L^{\tilde{q}}_x}&=\lambda^{1-\frac{d}{\tilde{q}}}\Big\|\nabla\Big[e^{it\lambda^2\Delta_V}g\Big(\frac{x}{\lambda}\Big)\Big]\Big\|_{L^{\tilde{q}}_x}\\
&\le C\lambda^{1-\frac{d}{\tilde{q}}} \Big\|e^{it\lambda^2\Delta_V}(-\Delta_V)^{\frac{1}{2}}g\Big(\frac{x}{\lambda}\Big)\Big\|_{L^{\tilde{q}}_x}\\
&\le C\lambda^{1-\frac{d}{\tilde{q}}}(t\lambda^2)^{-\frac{d}{2}\big( 1- \frac2{\tilde{q}}\big)}\Big \|(-\Delta_V)^{\frac{1}{2}}\Big( g \Big(\frac{x}
{\lambda}\Big)\Big)\Big\|_{\tilde{q}'}\\
&\le Ct^{-\frac{d}{2}\big( 1- \frac2{\tilde{q}}\big)}\|\nabla g\|_{\tilde{q}'}.
\end{align*}
Therefore $II$ can be estimated as follows:
$$
II\le C\|V_\lambda\|_{L^q}\left(\int^{\infty}_1t^{-d(\frac{1}{\tilde{q}'}-\frac{1}{\tilde{q}})}\,\mathrm{d}t \right)^{\frac{1}2}\|\nabla g\|_{\tilde{q}'}.
$$
Since $\tilde{q}\in(\frac{2d}{d-1},\frac{2d}{d-2})$, it is easy to see $II=o(\lambda)$. Hence the proof of (\ref{1.111}) is accomplished.

Second, we consider $\lambda\to\infty$. Back to (\ref{000}), for $d\ge7$, from H\"older inequality and Sobolev embedding, we obtain
\begin{align*}
&{\left\| {{V_\lambda }\nabla {u_2}} \right\|_{L_t^2L_x^{\frac{{2d}}{{d + 2}}}}} + {\left\| {{\lambda ^3}(\nabla V)(\lambda x){u_2}} \right\|_{L_t^2L_x^{\frac{{2d}}{{d + 2}}}}} \\
\le & \ {\left\| {{V_\lambda }} \right\|_{{L_x^{\frac{d}{3}}}}}{\left\| {\nabla {u_2}} \right\|_{L_t^2L_x^{\frac{{2d}}{{d - 4}}}}} + {\left\| {{\lambda ^3}(\nabla V)(\lambda x)} \right\|_{{L_x^{\frac{d}{4}}}}}{\left\| {{u_2}} \right\|_{L_t^2L_x^{\frac{{2d}}{{d - 6}}}}} \\
\le & \  {\lambda ^{ - 1}}{\left\| V \right\|_{{L_x^{\frac{d}{3}}}}}{\left\| {\Delta {u_2}} \right\|_{L_t^2L_x^{\frac{{2d}}{{d - 2}}}}} + {\lambda ^{ - 1}}{\left\| {\nabla V} \right\|_{{L_x^{\frac{d}{4}}}}}{\left\| {\Delta {u_2}} \right\|_{L_t^2L_x^{\frac{{2d}}{{d - 2}}}}} \\
\le & \  C{\left\| {\Delta g} \right\|_{{{\dot H_x}^2}}}\left( {{\lambda ^{ - 1}}{{\left\| V \right\|}_{{L_x^{\frac{d}{3}}}}} + {\lambda ^{ - 1}}{{\left\| {\nabla V} \right\|}_{{L_x^{\frac{d}{4}}}}}} \right).
\end{align*}
Let $\lambda\to \infty$, (\ref{2.111}) follows.
\end{proof}

We give a local but uniform version of Proposition \ref{888}.
As a preparation, we introduce an inhomogeneous Strichartz pair. It is elementary to verify that if $\tilde{r}=(\frac{2d}{d+2})^{-}$, $2<q<\infty$, then for $r\in(2,\infty)$ defined by
$$
\frac{1}{2}-\frac{1}{q}+\frac{d}{2}\Big(\frac{1}{\tilde{r}}-\frac{1}{r}\Big)=1,
$$
we have
$$
\frac{1}{\tilde{r}}-\frac{2}{d}<\frac{1}{r}\le\frac{d}{d-2}\Big(1-\frac{1}{\tilde{r}}\Big), \mbox{  }1-\frac{2}{d}\Big(\frac{1}{\tilde{r}}+\frac{1}{r}-1\Big)>\frac{1}{2}.
$$
Then by Theorem 2.4 in \cite{Vi},
\begin{align}
{\left\| {\int_0^t {{e^{i(t - s)\Delta }}} f(s)\,\mathrm{d}s} \right\|_{L_t^qL_x^r}} \le C\|f\|_{L_t^2L_x^{\tilde r}}.\label{huahua}
\end{align}
To avoid confusions, for $(q,r)$ introduced above, we denote $\|\nabla u\|_{L^q_tL^r_x}$ by $\|u\|_{{\bf{IH}}}$.
\begin{proposition}\label{lap}
For fixed $T>0$, $f\in {\dot H}^1$, we have
\begin{align}
&\mathop{\lim}\limits_{\lambda \to 0}\|e^{itL(\lambda)}-e^{it\Delta}\|_{L({\dot H}^1;{{\dot S}^1[-T,T])}}=0,\label{1101}\\
&\mathop{\lim}\limits_{\lambda \to \infty}\|e^{itL(\lambda)}-e^{it\Delta}\|_{L({\dot H}^1;{{\bf{IH}})}}=0.\label{1102}
\end{align}
\end{proposition}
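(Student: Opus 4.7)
The plan is to run the Duhamel decomposition of $v:=e^{itL(\lambda)}f-e^{it\Delta}f$ as in Proposition \ref{888}, but now to derive bounds that are \emph{linear} in $\|f\|_{\dot H^1}$ so that the strong convergence of Proposition \ref{888} is upgraded to operator-norm convergence. The two output norms are tuned to the two regimes: the finite time interval buys H\"older-in-time smallness for $\lambda\to 0$, whereas the inhomogeneous Strichartz pair with $\tilde r=(2d/(d+2))^{-}$ opens an \emph{open} H\"older window $p<d/2$, $p<d/3$ in which $\|V_\lambda\|_p$ and $\|\lambda^3(\nabla V)(\lambda\cdot)\|_p$ vanish as $\lambda\to\infty$.

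First I would record scale-invariant Strichartz bounds for $u_1(t):=e^{itL(\lambda)}f$. From the scaling identity (\ref{imp}), Proposition \ref{1234} and Corollary \ref{900}, for any admissible $(q,r)$ one has
$$\|u_1\|_{L_t^q L_x^r}\le C\|f\|_{L^2},\qquad \|\nabla u_1\|_{L_t^q L_x^r}\le C\|f\|_{\dot H^1},$$
with $C$ independent of $\lambda$; the gradient bound uses the invariance $\|V_\lambda\|_{d/2}=\|V\|_{d/2}$ to handle the $\dot H^1_{V_\lambda}$-piece uniformly. Commuting $\nabla$ with $e^{i(t-s)\Delta}$ in Duhamel then gives
$$\nabla v(t)=-i\int_0^t e^{i(t-s)\Delta}\bigl[\lambda^3(\nabla V)(\lambda\cdot)\,u_1(s)+V_\lambda\nabla u_1(s)\bigr]\,\mathrm{d}s.$$

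For (\ref{1101}) I would apply $\dot S^1$-Strichartz on $[-T,T]$ and estimate both pieces in $L_t^2[-T,T]L_x^{2d/(d+2)}$ by H\"older in space: put $\lambda^3(\nabla V)(\lambda\cdot)\in L_x^{d/2}$ (with norm $\lambda\|\nabla V\|_{d/2}$) against $u_1\in L_x^{2d/(d-2)}$, and $V_\lambda\in L_x^{d}$ (with norm $\lambda\|V\|_d$) against $\nabla u_1\in L_x^2$. H\"older in time on $[-T,T]$ then supplies a factor $T^{1/2}$, and the uniform Strichartz bounds yield
$$\|v\|_{\dot S^1[-T,T]}\le C T^{1/2}\lambda\bigl(\|V\|_d+\|\nabla V\|_{d/2}\bigr)\|f\|_{\dot H^1},$$
proving (\ref{1101}). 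For (\ref{1102}) I would apply the inhomogeneous Strichartz (\ref{huahua}) to bound $\|v\|_{{\bf{IH}}}\le C\|\nabla(V_\lambda u_1)\|_{L_t^2 L_x^{\tilde r}}$. Because $\tilde r<2d/(d+2)$ strictly, H\"older with the admissible partner $q_2=2d/(d-2)$ forces $p_2<d/2$, so $\|V_\lambda\|_{p_2}=\lambda^{2-d/p_2}\|V\|_{p_2}\to 0$ as $\lambda\to\infty$; similarly the partner $q_1=2d/(d-4)$ forces $p_1<d/3$, so $\|\lambda^3(\nabla V)(\lambda\cdot)\|_{p_1}=\lambda^{3-d/p_1}\|\nabla V\|_{p_1}\to 0$. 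The $u_1$-factor is controlled via Sobolev and the uniform gradient Strichartz: $\|u_1\|_{L_t^2 L_x^{2d/(d-4)}}\le C\|\nabla u_1\|_{L_t^2 L_x^{2d/(d-2)}}\le C\|f\|_{\dot H^1}$.

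The main technical obstacle is verifying the scale-invariance of the gradient-level Strichartz constants for $e^{itL(\lambda)}$; the rest is careful H\"older bookkeeping. The essential improvement over Proposition \ref{888} is that both estimates here are uniform in $f$ on the unit ball of $\dot H^1$ thanks to the \emph{open} H\"older window (the $T^{1/2}$ factor from finite time for $\lambda\to 0$, or the strict inequalities $p<d/2$, $p<d/3$ coming from $\tilde r<2d/(d+2)$ for $\lambda\to\infty$), whereas Proposition \ref{888} relied on a density argument that only produces strong operator convergence for each fixed $f$.
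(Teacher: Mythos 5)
Your proposal is correct and follows essentially the same route as the paper: the same Duhamel formula for $v=e^{itL(\lambda)}f-e^{it\Delta}f$, the same uniform-in-$\lambda$ Strichartz bounds for $u_1=e^{itL(\lambda)}f$ via \eqref{imp} and Corollary \ref{900}, the factor $T^{1/2}$ from H\"older in time together with $\|\lambda^3(\nabla V)(\lambda\cdot)\|_{d/2}+\|V_\lambda\|_{d}\sim\lambda$ for \eqref{1101}, and the strict H\"older exponents $p<d/3$, $p<d/2$ opened up by $\tilde r=(\tfrac{2d}{d+2})^-$ in \eqref{huahua} to gain negative powers of $\lambda$ for \eqref{1102}. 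The bookkeeping of scaling exponents and the Sobolev step $\|u_1\|_{L_t^2L_x^{2d/(d-4)}}\lesssim\|\nabla u_1\|_{L_t^2L_x^{2d/(d-2)}}$ all match the paper's argument.
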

\begin{proof}
As before, denote $u_1=e^{itL(\lambda)}f$, $v=e^{itL(\lambda)}f-e^{it\Delta}f$. Then by \eqref{imp} and Strichartz estimate, we have $\|u_1\|_{{\dot S}^1}\le C\|f\|_{{\dot H}^1}$. Strichartz estimates, \eqref{kool} and H\"older inequality show
\begin{align*}
&\quad\|v\|_{{\dot S}^1([-T,T]\times \mathbb{R}^d)}\\&\le C\|\nabla(V_{\lambda}u_1)\|_{L^2_tL^{\frac{2d}{d+2}}_x([-T,T] \times \mathbb{R}^d)}\\
&\le CT^{\frac{1}{2}}\|\lambda^3(\nabla V)(\lambda x)\|_{L_x^{\frac{d}2}}\|u_1\|_{L^{\infty}_tL^{\frac{2d}{d-2}}_x}+CT^{\frac{1}{2}}\|\lambda^2V(\lambda x)\|_{L_x^{d}}\|\nabla u_1\|_{L^{\infty}_tL^{2}_x},\\
&\le CT^{\frac{1}{2}}\|\lambda^3(\nabla V)(\lambda x)\|_{L_x^{\frac{d}2}}\|f\|_{{\dot H}^1} + CT^{\frac{1}{2}}\|\lambda^2V(\lambda x)\|_{L_x^{d}}\| f\|_{{\dot H}^1},
\end{align*}
by which (\ref{1101}) follows.
If $\lambda\to \infty$, (\ref{kool}), (\ref{huahua}) and H\"older inequality give
\begin{align*}
{\left\| v \right\|_{{\bf{IH}}}} &\le C{\left\| {\nabla ({V_\lambda }{u_1})} \right\|_{L_t^2L_x^{\tilde r}}}\\& \le C{\left\| {{\lambda ^3}(\nabla V)(\lambda x)} \right\|_{{L_x^{{{\frac{d}{3}} - }}}}}{\left\| {{u_1}} \right\|_{L_t^2L_x^{\frac{{2d}}{{d - 4}}}}} + C{\left\| {{\lambda ^2}V(\lambda x)} \right\|_{{L_x^{{{\frac{d}{2}} - }}}}}{\left\| {\nabla {u_1}} \right\|_{L_t^2L_x^{\frac{{2d}}{{d - 2}}}}} \\
&\le C{\lambda ^{ - \beta }}{\left\| {\nabla V} \right\|_{{L^{{{\frac{d}{3}} - }}}}}{\left\| f \right\|_{{{\dot H}^1}}} + C{\lambda ^{ - \alpha }}{\left\| V \right\|_{{L^{{{\frac{d}{2}} - }}}}}{\left\| f \right\|_{{{\dot H}^1}}}
\end{align*}
where $\alpha,\beta>0$.
Thus \eqref{1102} is proved.
\end{proof}

We now come to the last preparation, after which we will give the linear profile decomposition.
Suppose that $h_n,h^j_n\in (0,\infty)$, define the transformation $T_n$, $T^j_n$ as
 $$T_n u(x)=(h_n)^{-\frac{d-2}{2}}u\left(\frac{x}{h_n}\right),   \mbox{   }\mbox{  }T^j_n u(x)=(h^j_n)^{-\frac{d-2}{2}}u\left(\frac{x}{h^j_n}\right),$$
with the inverse transform of ${T}^j_n$ being
 \begin{equation*}
 ({{T}}^j_n)^{-1} u(x)=(h^j_n)^{\frac{d-2}{2}}u\left({h^j_n}x\right).
\end{equation*}
\begin{lemma}\label{jk}
If $h_n\to0$ or $\infty$, $g_n\rightharpoonup 0$ in ${\dot H}^1$, then for $\psi\in {\dot H}^1$,
$$\mathop {\lim }\limits_{n \to \infty } {\left\langle {{T_n}\psi ,{T_n}{g_n}} \right\rangle _{\dot H_V^1}} = 0.$$
\end{lemma}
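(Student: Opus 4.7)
The plan is to split
\begin{align*}
\langle T_n\psi, T_n g_n\rangle_{\dot H_V^1} = \int \nabla(T_n\psi)\cdot\overline{\nabla(T_n g_n)}\,\mathrm{d}x + \int V(x)\,(T_n\psi)(x)\,\overline{(T_n g_n)(x)}\,\mathrm{d}x
\end{align*}
and treat the two pieces separately. Since $T_n$ is a dilation isometry on $\dot H^1$, the substitution $y=x/h_n$ converts the first integral into $\int \nabla\psi\cdot\overline{\nabla g_n}\,\mathrm{d}y = \langle \psi, g_n\rangle_{\dot H^1}$, which tends to zero by the assumed weak convergence $g_n\rightharpoonup 0$ in $\dot H^1$. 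The problem thus reduces to showing that the rescaled potential pairing
\begin{align*}
I_n := h_n^2\int V(h_n y)\,\psi(y)\,\overline{g_n(y)}\,\mathrm{d}y
\end{align*}
converges to zero.

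I would handle $I_n$ by a density argument. For any $\epsilon>0$, pick $\phi\in C_c^\infty(\mathbb{R}^d)$ supported in some ball $B_R$ with $\|\psi-\phi\|_{\dot H^1}<\epsilon$. The difference $I_n(\psi-\phi)$ is controlled by H\"older and Sobolev once one notes that $\|h_n^2 V(h_n\cdot)\|_{L^{d/2}} = \|V\|_{L^{d/2}}$ is scale-invariant, yielding $|I_n(\psi-\phi)|\lesssim \|V\|_{L^{d/2}}\,\|\psi-\phi\|_{L^{2d/(d-2)}}\,\|g_n\|_{L^{2d/(d-2)}} \lesssim \epsilon$ uniformly in $n$ (using boundedness of $\{g_n\}$ in $\dot H^1$).

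The remaining term $I_n(\phi)$ I would bound by $\|h_n^2 V(h_n\cdot)\phi\|_{L^{2d/(d+2)}}\|g_n\|_{L^{2d/(d-2)}}$ and show the first factor tends to zero in both scaling regimes. When $h_n\to 0$, the pointwise bound $h_n^2|V(h_n y)|\le h_n^2\|V\|_\infty$ together with $\phi$ being compactly supported forces the $L^{2d/(d+2)}$ norm to zero. When $h_n\to\infty$, a direct scaling computation gives
\begin{align*}
\|h_n^2V(h_n\cdot)\|_{L^{2d/(d+2)}(B_R)} \le h_n^{(2-d)/2}\,\|V\|_{L^{2d/(d+2)}} \to 0,
\end{align*}
where $V\in L^{2d/(d+2)}$ is guaranteed by the hypothesis $\langle x\rangle^N V\in L^\infty$ with $N>d$. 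Letting $n\to\infty$ then $\epsilon\to 0$ finishes the argument.

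The main obstacle is the $h_n\to\infty$ regime, where $V(h_n\cdot)$ concentrates near the origin and one might fear that the prefactor $h_n^2$ spoils the estimate. The saving lies in pairing against $\{g_n\}$ bounded in $L^{2d/(d-2)}$, which forces $V(h_n\cdot)$ to be measured in $L^{2d/(d+2)}$; this norm carries the scaling weight $h_n^{-(d+2)/2}$, which beats $h_n^2$ as soon as $d>2$, so the dimension hypothesis $d\ge 7$ is more than enough.
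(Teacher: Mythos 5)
Your proof is correct and follows essentially the same route as the paper's: split off the $\dot H^1$ pairing (which vanishes by weak convergence and the dilation invariance of $\dot H^1$), then handle the potential term $h_n^2\langle V(h_n\cdot)\psi,g_n\rangle$ by approximating $\psi$ in $\dot H^1$ by a compactly supported function, using the scale-invariance of $\|h_n^2V(h_n\cdot)\|_{L^{d/2}}$ for the error, and extracting decay in $h_n$ for the main term via H\"older and scaling. The only cosmetic difference is your choice of the fixed exponent $L^{2d/(d+2)}$ (with $\|V\|_\infty$ doing the work as $h_n\to0$ and the scaling factor $h_n^{(2-d)/2}$ as $h_n\to\infty$), whereas the paper perturbs the exponent to $(d/2)^{\pm}$ depending on the regime; both are valid under the Regular Potential Hypothesis.
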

\begin{proof}
It is easy to verify
$${\left\langle {{T_n}\psi ,{T_n}{g_n}} \right\rangle _{\dot H_V^1}} = {\left\langle {\nabla \psi ,\nabla {g_n}} \right\rangle _{{L^2}}} + {\left\langle {h_n^2V({h_n}x)\psi ,{g_n}} \right\rangle _{{L^2}}}.$$
For $\forall\, \varepsilon$, $\psi\in {\dot H}^1$, take a function $\tilde{\psi}\in C^{\infty}_c$ such that $\|\tilde{\psi}-\psi\|_{{\dot H}^1}< \varepsilon$, then the lemma follows from
\begin{align}
&\quad{\left\langle {h_n^2V({h_n}x)\psi ,{g_n}} \right\rangle _{{L^2}}} \\& =  {\left\langle {h_n^2V({h_n}x)(\psi  - \tilde \psi ),{g_n}} \right\rangle _{{L^2}}} + {\left\langle {h_n^2V({h_n}x)\tilde \psi ,{g_n}} \right\rangle _{{L^2}}} \nonumber\\
&\le {\left\| {h_n^2V({h_n}x)} \right\|_{{L_x^{\frac{d}{2}}}}}{\big\| {\psi  - \tilde \psi } \big\|_{{{\dot H}^1}}}{\left\| {{g_n}} \right\|_{{L^{\frac{{2d}}{{d - 2}}}}}} + {\big\| {\tilde \psi } \big\|_{{L_x^{{{\left( {\frac{2d}{d - 2}} \right)}^ - }}}}}{\left\| {h_n^2V({h_n}x)} \right\|_{{L_x^{{{(\frac{d}2)}^ + }}}}}{\left\| {{g_n}} \right\|_{{L^{\frac{{2d}}{{d - 2}}}}}},\label{oiu}
\end{align}
If $h_n\to 0$, (\ref{oiu}) gives our proposition. If $h_n\to\infty$, instead of (\ref{oiu}), we use
$${\left\| {h_n^2V({h_n}x)} \right\|_{{L_x^{\frac{d}{2}}}}}{\big\| {\psi  - \tilde \psi } \big\|_{{{\dot H}^1}}}{\left\| {{g_n}} \right\|_{{L^{\frac{{2d}}{{d - 2}}}}}} + {\big\| {\tilde \psi } \big\|_{{L^{{{\left( {\frac{2d}{d - 2}} \right)}^ + }}}}}{\left\| {h_n^2V({h_n}x)} \right\|_{{L_x^{{{(\frac{d}2)}^ - }}}}}{\left\| {{g_n}} \right\|_{{L^{\frac{{2d}}{{d - 2}}}}}}.
$$
\end{proof}
The linear profile decomposition is given below and we follow arguments in \cite{NK}.
\begin{proposition}[Linear profile decomposition in $\dot{H}_{rad}^1$ ]\label{l4}
Suppose $v_n=e^{{ i t \Delta_V}}v_n(0)$ is a sequence of solutions to linear Schr\"odinger equations and $\{v_n(0)\}$ are bounded in ${\dot H}_{rad}^1
$. Then up to extracting a subsequence there exists $K\in {\Bbb N}$ such that for each $j\le K$, there exist $\varphi^j\in {\dot H}^1(\Bbb R^d)$, $\{(t^j_n, h^j_n)\}\subset \Bbb R\times (0,\infty)$ satisfying:
If we define $v^j_n$, $w^k_n$ for $j<k\le K$ by
\begin{align*}
&v^j_n=e^{{ i (t-t^j_n) \Delta_V}}T^j_n\varphi^j,\\
&{v_n} = \sum\limits_{j = 0}^{k - 1} {v_n^j}  + w_n^k,
\end{align*}
then
\begin{align}\label{mmm1}
\mathop {\lim }\limits_{k \to K} \mathop { {\limsup } }\limits_{n \to \infty } {\left\| {\nabla w_n^k} \right\|_{L_t^\infty \Big(\Bbb R;{\dot B}_{\infty ,\infty }^{ -\frac{d}2}\Big)}} = 0;
\end{align}
for $l<j<k\le K$, it holds
\begin{align}\label{l3}
\mathop {\lim }\limits_{n \to \infty} \left(\frac{h^j_n}{h^l_n}+\frac{h^l_n}{h^j_n}+\frac{|t^j_n-t^l_n|}{(h^j_n)^2} \right) =\infty;
\end{align}
and for $\forall\, t\ge 0$,
\begin{align}\label{p1}
\|v_n(t)\|^2_{{\dot H}^1_V}= \sum\limits_{j = 0}^{k - 1} \|v^j_n(t)\|^2_{{\dot H}^1_V}+\|w^k_n(t)\|^2_{{\dot H}^1_V}+o_n(1).
\end{align}
\end{proposition}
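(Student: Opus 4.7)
My plan is to follow the iterative profile extraction of Keraani and Bahouri--G\'erard, adapted to the potential setting along the lines of \cite{NK}. The central device will be a profile extraction lemma: given a bounded radial sequence $\{f_n\} \subset \dot{H}^1$ with $\limsup_{n\to\infty} \|\nabla e^{it\Delta_V} f_n\|_{L_t^\infty \dot{B}^{-\frac{d}{2}}_{\infty,\infty}} \ge \varepsilon > 0$, I would produce $\varphi \in \dot{H}^1\setminus\{0\}$, $t_n \in \mathbb{R}$, and $h_n \in (0,\infty)$ such that, up to a subsequence, $(T_n)^{-1} e^{-it_n \Delta_V} f_n \rightharpoonup \varphi$ weakly in $\dot{H}^1$, with $\|\varphi\|_{\dot H^1}$ bounded below by a quantity depending only on $\varepsilon$ and the $\dot H^1$-bound on $f_n$. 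This lemma would be obtained from the dispersive estimate (Proposition \ref{pro2.4}) by selecting the frequency scale $h_n^{-1}$ at which a Littlewood--Paley-localized piece of $e^{it\Delta_V} f_n$ attains its Besov supremum, together with the time $t_n$ at which the pointwise value becomes large.

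Starting from $w_n^0 = v_n$, I would iterate: having produced $(v_n^0,\ldots,v_n^{k-1})$ with remainder $w_n^k = v_n - \sum_{j<k} v_n^j$, apply the extraction lemma to $\{w_n^k(0)\}$ to obtain $(\varphi^k, t_n^k, h_n^k)$, and set $v_n^k = e^{i(t-t_n^k)\Delta_V} T_n^k \varphi^k$. If at some stage the Besov norm of $w_n^k$ already decays to zero, the process terminates with $K$ finite and \eqref{mmm1} is immediate; otherwise, the $\dot H^1_V$-almost-orthogonality (proved in the last step below) forces $\sum_j \|\varphi^j\|_{\dot H^1}^2$ to be summable, which in turn drives the Besov norm of $w_n^k$ to zero as $k\to\infty$, yielding \eqref{mmm1}.

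For the parameter orthogonality \eqref{l3}, I would argue by contradiction: if for some $l<j$ the quantity $h_n^j/h_n^l + h_n^l/h_n^j + |t_n^j - t_n^l|/(h_n^j)^2$ were bounded, then $(T_n^l)^{-1}e^{-it_n^l\Delta_V}(v_n - \sum_{i<l} v_n^i)$ would pick up a nonzero contribution from $\varphi^j$ in its weak limit, contradicting the defining property that the weak limit equals $\varphi^l$. The key subtlety is that $e^{it\Delta_V}$ lacks scaling invariance; I would handle this via the identity $e^{it\Delta_V} T_n^j = T_n^j e^{it(h_n^j)^{-2} L(h_n^j)}$ combined with Propositions \ref{888}--\ref{lap}, which show that when $h_n^j \to 0$ or $\infty$ the conjugated propagator converges to $e^{it\Delta}$ (strongly on $\dot H^1$, and in operator norm on compact time intervals respectively), so the standard orthogonality argument for the free propagator applies after this reduction. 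When the scale ratio stays bounded, I would pass to a subsequence on which $h_n^j$ and $h_n^l$ are both identically $1$ and use the standard orthogonality for $e^{it\Delta_V}$ together with the dispersive estimate of Proposition \ref{pro2.4} to handle large time separations.

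Finally, for the $\dot H_V^1$ decoupling \eqref{p1}, I would expand the norm into diagonal plus cross terms and prove that each cross term vanishes. Lemma \ref{jk} is the pivotal tool: when at least one of $h_n^j, h_n^l$ tends to $0$ or $\infty$, Lemma \ref{jk} applied after freezing one profile into a weakly-null sequence (which \eqref{l3} guarantees) yields vanishing cross terms; when both scales are comparable, \eqref{l3} forces $|t_n^j - t_n^l|/(h_n^j)^2 \to \infty$, and the dispersive decay of $e^{it\Delta_V}$ gives the vanishing. Cross terms involving $w_n^k$ vanish by the construction of the remainder, since $(T_n^l)^{-1}e^{-it_n^l\Delta_V}w_n^k(0) \rightharpoonup 0$ in $\dot H^1$ for $l<k$. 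The main obstacle throughout will be juggling the three scale regimes ($h_n^j \to 0$, $h_n^j$ bounded, $h_n^j \to \infty$) while preserving the $\dot H^1_V$ accounting despite the non-scale-invariance of $\Delta_V$; this is exactly what Propositions \ref{888} and \ref{lap} are designed to overcome, by allowing each profile to be analyzed via an auxiliary free Schr\"odinger evolution up to a vanishing error.
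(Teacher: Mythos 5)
Your proposal follows essentially the same route as the paper's proof: extraction of $(t_n^j,h_n^j,\varphi^j)$ from concentration of the $\dot B^{-\frac d2}_{\infty,\infty}$ norm, iteration with the lower bound $\|\varphi^j\|_{\dot H^1}\gtrsim v$ driving \eqref{mmm1}, the conjugation identity \eqref{poi2} combined with Propositions \ref{888} and \ref{lap} to reduce the limiting-scale cases to the free propagator, Lemma \ref{jk} for the $\dot H^1_V$ cross terms, and the dispersive estimate for large time separations at comparable scales. The one point you gloss over is why no spatial translation parameters appear in the extraction: the paper uses a radial Gagliardo--Nirenberg inequality together with Bernstein to show that the point where the Besov norm concentrates lies within $O(h_n)$ of the origin, which is precisely where the radial hypothesis enters and deserves to be made explicit in a full write-up.
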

\begin{proof}
Denote $ v \triangleq  \mathop {\limsup }\limits_{n \to \infty} \|  \nabla v_n\|_{L^{\infty}_t{\dot B}^{- \frac{d}2}_{\infty,\infty}}.$ If $v=0$, take $K=0$.
Otherwise for $n$ large enough, there exists $(t_n,{x}_n)\in \Bbb R\times \Bbb R^d$ and nonnegative integer $k_n$ such that
\begin{align}\label{w1}
[2^{-\frac{dk_n}{2}}\phi_{k_n} \ast\nabla v_n(t_n)]({x}_n)\ge {\frac{v}2}.
\end{align}
By radial Gagliardo-Nirenberg inequality and Bernstein inequality,
\begin{align*}
&\left\|2^{-\frac{dk_n}{2}}\phi_{k_n} \ast\nabla v_n(t_n) \right\|_{ L_x^{\infty}({|x|\ge R_n})}\\
\lesssim &¡¡\ R^{-\frac{d-1}{2}}_n2^{-\frac{dk_n}{2}}\|\nabla\phi_{k_n} \ast\nabla v_n(t_n )\|^{\frac{1}{2}}_2\|\phi_{k_n} \ast\nabla v_n(t_n )\|^{\frac{1}{2}}_2\\
\lesssim & \  R^{-\frac{d-1}{2}}_n2^{-\frac{dk_n}{2}}\|\nabla\phi_{k_n}\|^{\frac{1}{2}}_{L^1}\|\phi_{k_n}\|^{\frac{1}{2}}_{L^1}\|\nabla v_n(t_n)\|_{L^2}\\
\lesssim & \  R^{-\frac{d-1}{2}}_n2^{(\frac{1}{2}-\frac{d}{2})k_n}.
\end{align*}
Take $R_n=R_02^{-k_n}$ and let $R_0$ be sufficiently large such that
$$
\|2^{-\frac{dk_n}{2}}(\phi_{k_n} \ast\nabla v_n(t_n ))(x)\|_{ L_x^{\infty}({|x|\ge R_n})}<\frac{v}{4}.
$$
Then by (\ref{w1}), $x_n$ satisfies $|x_n|\le R_n$ and
\begin{align}\label{w2}
[2^{-\frac{dk_n}{2}}\phi_{k_n} \ast\nabla v_n(t_n )](x_n)\ge \frac{v}{4}.
\end{align}
Define $h_n=2^{-k_n}$, and let $\psi_n(x) = h_n^\frac{d-2}2 v_n(t_n, h_n x)$.
By (\ref{w2}),
\begin{align}\label{w3}
\int_{{\Bbb R^d}}  {\nabla {\psi _n}} (y){\phi}({h_n^{-1}}{x_n} - y)\,\mathrm{d}y > \frac{v}{4}.
\end{align}
Because $|x_n|\le R_0 h_n$, up to extracting a subsequence, we can assume $h_n^{-1} x_n \to x^\ast$ for some constant vector $x^\ast\in \mathbb{R}^d$. Since $\psi_n$ is bounded in ${\dot H}^1$, we can postulate $\psi_n\rightharpoonup\psi$ in ${\dot H}^1$, then by $h_n^{-1} x_n\to x^\ast$, (\ref{w3}) indicates
\begin{align*}
\|\nabla\psi\|_2&\gtrsim \left\langle {\nabla \psi(y) ,{\phi}(x^\ast - y)} \right\rangle
\ge \frac{v}{4}.
\end{align*}
If $h_n\to 0$ or $\infty$, we take $(t^0_n, h^0_n)=(t_n, h_n)$, $\varphi^0 =\psi$. If $h_n\to h_{\infty}>0$, then let
$$
(t^0_n, h^0_n)=(t_n, 1), \varphi^0(x)  = h_{\infty}^{-\frac{d-2}{2}}\psi\left(h_\infty^{-1} x\right).
$$
Then $T_n\psi- T^0_n\varphi^0\to 0$ in ${\dot H}^1$, as $n\to \infty$. Now define
\begin{align*}
&v^0_n=e^{i(t-t^0_n) \Delta_V}T^0_n\varphi^0, \\
&w^1_n=v_n-v^0_n,
\end{align*}
then one has
\begin{align}\label{pp22}
(T^0_n)^{-1}w^1_n(t^0_n)\rightharpoonup0 \mbox{  }{\rm{ \mbox{  }in}} \mbox{ }{\dot H}^1, \text{ as } n\to \infty.
\end{align}
We claim
\begin{align}\label{po1}
\mathop {\lim }\limits_{n \to \infty } {\left\langle {v_n^0(t_n^0),w_n^1(t_n^0)} \right\rangle _{\dot H_V^1}} = 0.
\end{align}
Indeed, when $h_n\to h_{\infty}$,
\begin{align*}
 {\left\langle {v_n^0(t_n^0),w_n^1(t_n^0)} \right\rangle _{\dot H_V^1}} = {\left\langle {T_n^0{\varphi ^0},w_n^1(t_n^0)} \right\rangle _{\dot H_V^1}}
 = {\left\langle {h_\infty ^{ - \frac{d}2}\psi \left( {h_\infty ^{ - 1} x } \right),{{\left( {T_n^0} \right)}^{ - 1}}w_n^1(t_n^0)} \right\rangle _{\dot H_V^1}} \to 0,
\end{align*}
due to (\ref{pp22}).
When $h_n\to 0 \text{ or } \infty$, as a consequence of  Lemma \ref{jk} and the fact ${\psi _n} - \psi \rightharpoonup 0$ in ${\dot H}^1$,
\begin{align*}
 {\left\langle {v_n^0(t_n^0),w_n^1(t_n^0)} \right\rangle _{\dot H_V^1}} &= {\left\langle {{T_n}\psi ,{T_n}{\psi _n} - {T_n}\psi } \right\rangle _{\dot H_V^1}}\to 0.
\end{align*}
Therefore we have proved (\ref{po1}). Since the inner product is preserved with respect to $t$, thus
$$
\mathop {\lim }\limits_{n \to \infty } {\left\langle {v_n^0(t),w_n^1(t)} \right\rangle _{\dot H_V^1}} = 0.
$$
Until now, we have accomplished the first step.
Next, we treat $w^1_n$ as $v_n$ and do the same work.
If
$
 {\mathop {\limsup }\limits_{n \to \infty } }\  {\left\| {\nabla w_n^1} \right\|_{L_t^\infty {\dot B}_{\infty ,\infty }^{ - \frac{d}2}}} = 0,
$
take $K=1$. Otherwise we can find $v^1_n$ and $w^2_n$ such that there exist $(t^1_n, h^1_n)\in \mathbb{R} \times (0,\infty)$ and $\varphi^1\in {\dot H}^1(\Bbb R^d)$ for which
\begin{align*}
&w^1_n=v^1_n+w^2_n, \mbox{  }\mbox{  } v^1_n=e^{i(t-t^1_n)  \Delta_V}T^1_n\varphi^1,\\
&{\left\langle {v_n^1(t),w_n^2(t)} \right\rangle _{\dot H_V^1}} \to 0 \\
&{(T_n^1)^{ - 1}}w_n^2(t_n^1) \rightharpoonup 0 \mbox{  }{\rm{ \mbox{  } in}} \mbox{ }{\dot H}^1, \mbox{  } {\rm{as} } \ n\to \infty,
\end{align*}
and
$$ {\mathop {\limsup }\limits_{n \to \infty } } \ {\left\| {\nabla w_n^1} \right\|_{L_t^\infty {\dot B}_{\infty ,\infty }^{ - \frac{d}2}}} \le {\left\| {{\varphi ^1}} \right\|_{{{\dot H}^1}}}.
$$
Iteration for times gives the desired decomposition, the remaining work is to verify (\ref{mmm1}), (\ref{l3}) and (\ref{p1}).
Firstly, (\ref{mmm1}) is a direct corollary of (\ref{p1}) and the fact
$$ \limsup \limits_{n \to \infty } \  {\left\| {\nabla w_n^k} \right\|_{L_t^\infty {\dot B}_{\infty ,\infty }^{ - \frac{d}2}}} \le {\left\| {{\varphi ^{k-1}}} \right\|_{{{\dot H}^1}}}.
$$
Secondly, we prove (\ref{p1}) under (\ref{l3}). We claim for $l<j$,
\begin{align}\label{ok}
{\left\langle {v_n^l(0),v_n^j(0)} \right\rangle _{\dot H_V^1}} \to 0, \text{ as } n\to \infty.
\end{align}
It is easy to verify
\begin{align}
&{\left\langle {T_n^lf,g} \right\rangle _{\dot H_V^1}} = {\left\langle {f,{{\left( {h_n^l} \right)}^{\frac{d + 2}2}}({\Delta _V}g)(h_n^lx)} \right\rangle _{{L^2}}}, \label{poi1}\\
&\left( {{e^{i\eta {\Delta _V}}}a\Big(\frac{ \cdot }{\lambda }\Big)} \right)(\lambda x) = \left( {{e^{i\frac{{\eta L(\lambda )}}{{{\lambda ^2}}}}}a} \right)(x) \label{poi2}.
\end{align}
Careful calculations with the help of (\ref{poi1}) and (\ref{poi2}) imply
\begin{align*}
&{\left\langle {v_n^l(0),v_n^j(0)} \right\rangle _{\dot H_V^1}} \\
= & \ {\left\langle {{e^{ - it_n^l{\Delta _V}}}T_n^l{\varphi ^l},{e^{ - it_n^j{\Delta _V}}}T_n^j{\varphi ^j}} \right\rangle _{\dot H_V^1}} = {\left\langle {T_n^l{\varphi ^l},{e^{i(t_n^l - t_n^j){\Delta _V}}}T_n^j{\varphi ^j}} \right\rangle _{\dot H_V^1}} \\
= & \ {\left( {\frac{{h_n^l}}{{h_n^j}}} \right)^{\frac{d + 2}2}}{\left\langle {{\varphi ^l},{{(h_n^j)}^2}V(h_n^j){e^{i\frac{t_n^l - t_n^j}{{{(h_n^j)}^2}}L(h_n^j)}}{\varphi ^j}} \right\rangle _{{L^2}}}\\&\quad - {\left( {\frac{{h_n^l}}{{h_n^j}}} \right)^{\frac{d+ 2}2}}{\left\langle {\nabla {\varphi ^l},\nabla {e^{i\frac{t_n^l - t_n^j}{{{(h_n^j)}^2}} L(h_n^j)}}{\varphi ^j}} \right\rangle _{{L^2}}}.
\end{align*}
When $\frac{{h_n^l}}{{h_n^j}} \to 0$, (\ref{ok}) follows from
\begin{align*}
&{\left\langle {{\varphi ^l},{{(h_n^j)}^2}V(h_n^j){e^{i\frac{t_n^l - t_n^j}{{{(h_n^j)}^2}}L(h_n^j)}}{\varphi ^j}} \right\rangle _{{L^2}}}\\
&\le {\left\| {h_n^2V({h_n}x)} \right\|_{{L^{\frac{d}{2}}}}}{\left\| {{\varphi ^l}} \right\|_{{L^{\frac{{2d}}{{d - 2}}}}}}{\left\| {{e^{i\frac{t_n^l - t_n^j}{{{(h_n^j)}^2}} L(h_n^j)}}{\varphi ^j}} \right\|_{{{\dot H}^1}}} \le {\left\| V \right\|_{{L^{\frac{d}{2}}}}}{\Big\| {{\varphi ^l}} \Big\|_{{{\dot H}^1}}},
\end{align*}
and
\begin{align*}
{\left\langle {\nabla {\varphi ^l},\nabla {e^{i\frac{t_n^l - t_n^j}{{{(h_n^j)}^2}} L(h_n^j)}}{\varphi ^j}} \right\rangle _{{L^2}}} \le {\left\| {{\varphi ^l}} \right\|_{{{\dot H}^1}}}{\left\| {{\varphi ^j}} \right\|_{{{\dot H}^1}}},
\end{align*}
where we have used (\ref{sa}).
If ${\rm{log}}\Big(\frac{{h_n^l}}{{h_n^j}}\Big) \to c\in {\Bbb R}$, due to (\ref{l3}), we have ${\frac{t_n^l - t_n^j}{{{(h_n^j)}^2}}}\to \infty$. In this case, note that by density arguments, it suffices to prove (\ref{ok}) for $\varphi^l, \, \varphi^j\in C^{\infty}_c$.
From Proposition \ref{pro2.4} and (\ref{imp}),
\begin{align*}
 &\left| {{{\left\langle {\nabla {\varphi ^l},\nabla {e^{i\frac{t_n^l - t_n^j}{{{(h_n^j)}^2}} L(h_n^j)}}{\varphi ^j}} \right\rangle }_{{L^2}}}} \right| \le {\left\| {\Delta {\varphi ^l}} \right\|_{{L^{\frac{{2d}}{{d + 2}}}}}}{\left\| {{e^{i\frac{t_n^l - t_n^j}{{{(h_n^j)}^2}} L(h_n^j)}}{\varphi ^j}} \right\|_{{L^{\frac{{2d}}{{d - 2}}}}}} \to 0 \\
 &\left| {{{\left\langle {{\varphi ^l},{{(h_n^j)}^2}V(h_n^j){e^{i\frac{t_n^l - t_n^j}{{{(h_n^j)}^2}} L(h_n^j)}}{\varphi ^j}} \right\rangle }_{{L^2}}}} \right| \le {\left\| {{\varphi ^l}} \right\|_{{L^{\frac{{2d}}{{d - 2}}}}}}{\left\| {{e^{i\frac{t_n^l - t_n^j}{{{(h_n^j)}^2}} L(h_n^j)}}{\varphi ^j}} \right\|_{{L^{\frac{{2d}}{{d - 2}}}}}}{\left\| V \right\|_{\frac{d}{2}}} \to 0.
 \end{align*}
Hence we have obtained (\ref{ok}). Since the inner product is preserved with respect to $t$, then
\begin{equation}\label{eq4.35}
\mathop {\lim }\limits_{n \to \infty } {\left\langle {v_n^l(t),v_n^j(t)} \right\rangle _{\dot H_V^1}} = 0.
\end{equation}
By (\ref{ok}) and the procedure of construction,
\begin{equation} \label{eq4.36}
{\left\langle {v_n^j(t),w_n^k(t)} \right\rangle _{\dot H_V^1}} = {\left\langle {v_n^j(t),w_n^{j + 1}(t)} \right\rangle _{\dot H_V^1}} - \sum\limits_{m = j + 1}^{k - 1} {{{\left\langle {v_n^j(t),v_n^m(t)} \right\rangle }_{\dot H_V^1}}}  \to 0.
\end{equation}
Then (\ref{p1}) follows easily from \eqref{eq4.35} and \eqref{eq4.36}.
Thirdly, we prove (\ref{l3}) by induction. Assume that (\ref{l3}) holds for $(n_1,n_2)<(l,j)$, we prove it holds for $(l,j)$. Suppose that (\ref{l3}) is false for $(l,j)$, then up to extracting a subsequence, we can assume
\begin{align}\label{kkjo}
h^l_n\to h^l_{\infty}\in\{0,\infty\} \cup{\Bbb R}, \mbox{  }\frac{(t^l_n-t^j_n)}{h^l_n}^2\to c\in \Bbb R, \mbox{  }{\rm{log}}\bigg(\frac{h^l_n}{h^j_n}\bigg)\to a\in \Bbb R, \text{ as } n\to \infty.
\end{align}
Notice that the process of constructing profiles $\{\varphi ^m\}$ yields
\begin{align}\label{kk}
{(T_n^l)^{ - 1}}w_n^{l+1}(t_n^l) = {(T_n^l)^{ - 1}}\sum\limits_{m = l+1}^j {{e^{i(t_n^l - t_n^m){\Delta _V}}}T_n^m{\varphi ^m}}  + {(T_n^l)^{ - 1}}w_n^{j + 1}(t_n^l),
\end{align}
and
\begin{align}\label{kky7}
(T^j_n)^{-1}w^{j+1}_n(t^j_n)\rightharpoonup0 {\rm{\mbox{  }weakly\mbox{  }in\mbox{  }}}{\dot H}^1,\mbox{  }(T^l_n)^{-1}w^{l+1}_n(t^l_n)\rightharpoonup0 {\rm{\mbox{  }weakly\mbox{  }in\mbox{  }}}{\dot H}^1.
\end{align}
Meanwhile (\ref{poi2}) gives,
$${(T_n^l)^{ - 1}}{e^{i(t_n^l - t_n^m){\Delta _V}}}T_n^m{\varphi ^m} = {\left( {\frac{{h_n^l}}{{h_n^m}}} \right)^{\frac{d - 2}2}}\left( {{e^{i\frac{ t_n^l - t_n^m }{{{(h_n^m)}^2}}  L(h_n^m) }} {\varphi ^m}} \right)\bigg(\frac{{h_n^l x}}{{h_n^m}}\bigg) \equiv S_n^{l,m}{\varphi ^m}.$$
From our hypothesis,
$S^{l,m}_n{\varphi ^m}\rightharpoonup 0$ in ${\dot H}^1$, as $n\to \infty$, for $m<j$.  Hence we deduce from (\ref{kky7}), (\ref{kkjo}) and Proposition 3.3 that
$$(T^l_n)^{-1}w^{j+1}_n(t^l_n)\rightharpoonup0 \rm{\mbox{  }weakly\mbox{  }in\mbox{  }}{\dot H}^1.$$
Combining this with $S^{l,m}_n{\varphi ^m}\rightharpoonup 0$ and
(\ref{kk}), (\ref{kky7}) gives
$$\varphi^j \equiv0,$$
which is a contradiction.
\end{proof}

The linear profile decomposition enjoys more properties than addressed in Proposition \ref{l4}. We collect them below.
\begin{proposition}\label{de}
Suppose that $v_n$, $v^j_n$, $w^k_n$, $h^j_n$ are the components of the profile decomposition in Proposition \ref{l4}. Then there are only three cases for $h^j_n$ namely, $\mathop {\lim }\limits_{n \to \infty } h_n^j = 0$, or $\mathop {\lim }\limits_{n \to \infty } h_n^j = \infty$  or $h^j_n=1$ for all $n$. For any fixed $t$, the following energy decoupling property holds:
\begin{align}\label{lu}
\mathcal{E}(v_n) = \sum\limits_{j = 0}^{k - 1} \mathcal{E}(v_n^j) +  \mathcal{E}(w_n^k) + {o_n}(1).
\end{align}
And
\begin{align}
&\mathop {\lim }\limits_{k \to K }\mathop {\limsup }\limits_{n\to \infty }\|w^k_n\|_{L^{\frac{2(d+2)}{d-2}}_{t,x}}=0.\label{final}\\
&\mathop {\lim }\limits_{k \to K }\mathop {\limsup }\limits_{n\to \infty }\|w^k_n\|_{L^2_tL^{\frac{2d }{d-4}}_{x}}=0.\label{hj}
\end{align}
\end{proposition}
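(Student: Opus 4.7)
The first assertion is essentially a bookkeeping consequence of the construction in Proposition \ref{l4}. Each time a profile is extracted one obtains a scale $h_n = 2^{-k_n}$ from a Littlewood--Paley concentration frequency; after passing to a subsequence, $h_n$ tends to $0$, tends to $\infty$, or tends to some $h_\infty \in (0,\infty)$. In the last case one applies the normalization already performed in the proof of Proposition \ref{l4} for $j=0$: replace $(t_n,h_n)$ by $(t_n,1)$ and absorb the fixed rescaling $h_\infty^{-(d-2)/2}\psi(h_\infty^{-1}\cdot)$ into the profile $\varphi^j$. Doing this for every index $j \le K$ and extracting diagonally yields the trichotomy $\lim_n h_n^j = 0$, $\lim_n h_n^j = \infty$, or $h_n^j \equiv 1$.

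For the energy decoupling \eqref{lu}, the kinetic contribution $\|\cdot\|_{\dot H^1_V}^2$ decouples by \eqref{p1}. For the nonlinear piece $\int |\cdot|^{2d/(d-2)}\,\mathrm{d}x$, I would iterate the Br\'ezis--Lieb-type inequality
\[
\Big| |a+b|^{\frac{2d}{d-2}} - |a|^{\frac{2d}{d-2}} - |b|^{\frac{2d}{d-2}} \Big| \lesssim |a|^{\frac{d+2}{d-2}}|b| + |a|\,|b|^{\frac{d+2}{d-2}}
\]
along $v_n = \sum_{j<k} v^j_n + w^k_n$. The resulting cross terms $\int |v^j_n|^\alpha |v^l_n|^\beta\,\mathrm{d}x$ with $l<j$ and $\alpha+\beta = 2d/(d-2)$ are shown to be $o_n(1)$: by density one reduces $\varphi^j,\varphi^l$ to $C_c^\infty$ functions, rescales so that one profile is frozen, and invokes \eqref{l3} together with the dispersive estimate of Proposition \ref{pro2.4} (combined with the convergence in Proposition \ref{888} when comparison with the free evolution $e^{it\Delta}$ is needed) to force one of the factors to zero in $L^p_x$. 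Cross terms involving $w^k_n$ are disposed of by H\"older's inequality and the uniform $\dot H^1$-bound of $w^k_n$, which controls $\|w^k_n\|_{L^{2d/(d-2)}}$.

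The asymptotic vanishing \eqref{final}--\eqref{hj} is standard once one combines the uniform Strichartz bound on $w^k_n$ with the $\dot B^{-d/2}_{\infty,\infty}$-decay \eqref{mmm1}. Proposition \ref{1234} gives $\|\nabla w^k_n\|_{S^0} \lesssim 1$, and a Littlewood--Paley/Bernstein interpolation of the form
\[
\|f\|_{L^{\frac{2(d+2)}{d-2}}_{t,x}} + \|f\|_{L^2_t L^{\frac{2d}{d-4}}_x} \lesssim \|\nabla f\|_{L^\infty_t \dot B^{-d/2}_{\infty,\infty}}^{\theta}\, \|\nabla f\|_{S^0}^{1-\theta}
\]
with some $\theta \in (0,1)$ then delivers both statements by sending first $n\to\infty$ and then $k\to K$.

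The main obstacle will be the nonlinear energy decoupling, because asymptotic orthogonality in \eqref{l3} is a statement about scales and time shifts that must be converted into quantitative $L^p$ smallness of products of different profiles. The fact that the underlying flow is $e^{it\Delta_V}$ rather than $e^{it\Delta}$ means one cannot simply invoke the usual free-Schr\"odinger orthogonality lemmas; instead, in the $h_n^j\to 0$ and $h_n^j\to\infty$ cases one passes to the free evolution via Proposition \ref{888}, and in the $h_n^j\equiv 1$ case one uses the dispersive estimate of Proposition \ref{pro2.4} directly. The trichotomy established in the first step is precisely what organises these cases.
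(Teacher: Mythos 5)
Your overall route coincides with the paper's: the trichotomy for $h^j_n$ is exactly the normalization already built into the construction of Proposition \ref{l4}; the kinetic decoupling is \eqref{p1}; the nonlinear decoupling is the Keraani-type argument using density, \eqref{l3}, the dispersive estimate and Proposition \ref{888}; and \eqref{final}--\eqref{hj} follow from the Besov smallness \eqref{mmm1} interpolated against uniform Strichartz bounds (the paper does this via refined Sobolev embedding, then Gagliardo--Nirenberg plus H\"older in time for the $L^2_t$ endpoint).

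There is, however, one step in your sketch that would fail as written. For the cross terms in the potential energy that involve $w^k_n$, you propose to dispose of them ``by H\"older's inequality and the uniform $\dot H^1$-bound of $w^k_n$, which controls $\|w^k_n\|_{L^{2d/(d-2)}}$.'' H\"older plus a uniform bound yields only $O(1)$, not the required $o_n(1)$ for fixed $k$. The correct mechanism is the weak-limit information produced by the construction, namely $(T^j_n)^{-1}w^{k}_n(t^j_n)\rightharpoonup 0$ in $\dot H^1$: after approximating $\varphi^j$ by a compactly supported function and undoing the symmetries $T^j_n$, the cross term $\int |v^j_n|^{\frac{d+2}{d-2}}|w^k_n|\,\mathrm{d}x$ is an integral of the rescaled remainder against a fixed compactly supported weight, and the compactness of the Sobolev embedding on bounded domains converts the weak convergence into the needed vanishing. (Alternatively one can settle for a double limit $\lim_{k\to K}\limsup_n$ and invoke the refined Sobolev smallness of $\|w^k_n\|_{L^{2d/(d-2)}}$, but \eqref{lu} as stated is for each fixed $k$.) A smaller caveat: your single interpolation inequality cannot be applied naively to the pair $(2,\frac{2d}{d-4})$, since one cannot interpolate the $L^2_t$ exponent between $L^\infty_t$ and admissible exponents with positive weight on the $L^\infty_t$ factor; this is precisely why the paper inserts the spatial Gagliardo--Nirenberg step with exponents $(\frac{2d}{d-2})^{\pm}$ before applying H\"older in time. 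Your inequality is still true after chaining through an $\dot H^1$-admissible norm with $m>2$, but that intermediate step is where the actual work lies.
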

\begin{proof}
The proof of (\ref{lu}) is standard except some modifications, see for instance \cite{K}. In fact, the linear part of $\mathcal{E}(v_n)$ has been proved in (3.17). The nonlinear part can be proved with the help of Proposition 3.3 and (3.26).  It remains to prove (\ref{final}) and (\ref{hj}). The refined Sobolev embedding theorem gives
$$\mathop {\lim }\limits_{k \to K }\mathop {\limsup }\limits_{n\to \infty }\|w^k_n\|_{L^{\infty}_tL^{\frac{2d}{d-2}}_x}=0.
$$
Moreover, by interpolation we have
\begin{align}\label{uuj}
\mathop {\lim }\limits_{k \to K }\mathop {\limsup }\limits_{n\to \infty }\|w^k_n\|_{L^{m}_tL^{n}_x}=0,
\end{align}
where $(m,n)$ is an ${\dot H}^1$-admissible pair and $m>2$, which implies \eqref{final}.
The Gagliardo-Nirenberg implies
\begin{align}
\|w^k_n\|_{\frac{2d}{d-4}}\le C\|\nabla w^k_n\|^{\theta}_r\|w^k_n\|^{1-\theta}_p,
\end{align}
where $p=(\frac{2d}{d-2})^{-}$, $r=(\frac{2d}{d-2})^{+}$, $ \theta = \frac{2dr-(d-4)pr}{2d(r-p)+ 2rp}$.
Using H\"older inequality, we conclude that
$$\|w^k_n\|_{L^2_tL^{\frac{2d}{d-4}}_x}\le C\|\nabla w^k_n\|^{\theta}_{L^{\gamma}_tL^r_x}\|w^k_n\|^{1-\theta}_{L^{\eta}_tL^p_x},$$
where $(\eta,p)$ is $\dot{H}^1-$admissible pair, $(\gamma,r)$ is $L^2-$admissible pair, and
\begin{align}\label{hk}
\frac{1-\theta}{\eta}+\frac{\theta}{\gamma}=\frac{1}{2}.
\end{align}
Direct calculation shows (\ref{hk}) coincides with the choice of $\theta$, thus (\ref{hj}) follows from (\ref{uuj}).
\end{proof}

As a direct consequence of (\ref{hj}) and Corollary \ref{chen3}, we have
\begin{corollary}\label{chen4}
For $w^k_n$ in Proposition \ref{l4} and a fixed $j$,
$$\mathop {\lim }\limits_{k \to K} \mathop {\limsup }\limits_{n \to \infty }
{\left\| {{e^{it\Delta }}{e^{it_n^j{\Delta _V}}}w_n^k(0)} \right\|_{L^2_tL^{\frac{2d}{d - 4}}_x}} = 0.$$
\end{corollary}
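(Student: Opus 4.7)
The plan is to reduce the statement to Corollary \ref{chen3} by exploiting time-translation invariance of the $L^2_t$ norm. First, observe that since $w_n^k$ is defined as the remainder in the linear profile decomposition and therefore solves the free linear Schr\"odinger flow with potential, one has $w_n^k(t) = e^{it\Delta_V} w_n^k(0)$. Setting $f_n^k := e^{it_n^j \Delta_V} w_n^k(0)$, the group property gives $e^{it\Delta_V} f_n^k = w_n^k(t + t_n^j)$, so by translation invariance of Lebesgue measure on the time axis
$$\big\|e^{it\Delta_V} f_n^k\big\|_{L^2_t L^{\frac{2d}{d-4}}_x} = \big\|w_n^k\big\|_{L^2_t L^{\frac{2d}{d-4}}_x},$$
and by \eqref{hj} in Proposition \ref{de} the right-hand side vanishes in the iterated limit $k \to K$, $n \to \infty$.

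Next I would verify that $\{f_n^k\}_n$ is bounded in $\dot H^1$ for each fixed $k$, which is the hypothesis required to invoke Corollary \ref{chen3}. From the energy decoupling \eqref{p1}, $\|w_n^k(0)\|_{\dot H^1_V}$ is controlled uniformly in $n$ by $\|v_n(0)\|_{\dot H^1_V}$. Since $e^{it_n^j \Delta_V}$ commutes with $\Delta_V$ and is an $L^2$-isometry, it preserves the $\dot H^1_V$ norm. Combined with the norm equivalence $\dot H^1_V \sim \dot H^1$ (which follows from $V \ge 0$, the assumption $V \in L^{d/2}$, Sobolev embedding, and H\"older's inequality) this gives $\|f_n^k\|_{\dot H^1} \lesssim \|w_n^k(0)\|_{\dot H^1}$ uniformly in $n$.

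With these two ingredients in hand, Corollary \ref{chen3} can be applied to the sequence $\{f_n^k\}_n$ for each fixed $k$. Inspecting its proof actually yields the quantitative bound
$$\big\|e^{it\Delta} f_n^k - e^{it\Delta_V} f_n^k\big\|_{L^2_t L^{\frac{2d}{d-4}}_x} \le C\big\|e^{it\Delta_V} f_n^k\big\|_{L^2_t L^{\frac{2d}{d-4}}_x} + C\big\|e^{it\Delta_V} f_n^k\big\|_{L^2_t L^{\frac{2d}{d-4}}_x}^{1/3},$$
so taking $\limsup_{n\to\infty}$ and then letting $k \to K$ delivers the conclusion. The only conceptually nontrivial step is the first reduction via translation in time; the remaining subtlety to double-check is that $\dot H^1$-boundedness is transported through the group $e^{it_n^j \Delta_V}$ (with time-dependent shift $t_n^j$), which as noted relies on the equivalence of $\dot H^1_V$ and $\dot H^1$. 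No uniformity in $k$ or $j$ is needed beyond what the profile decomposition already provides.
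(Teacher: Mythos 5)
Your argument is correct and is exactly the route the paper intends: the paper states this corollary without proof, calling it ``a direct consequence of (\ref{hj}) and Corollary \ref{chen3},'' and your proof supplies precisely the missing details (the time-translation identity $e^{it\Delta_V}e^{it_n^j\Delta_V}w_n^k(0)=w_n^k(t+t_n^j)$, the uniform $\dot H^1$ bound via the equivalence of $\dot H^1_V$ and $\dot H^1$, and the quantitative form of Corollary \ref{chen3} needed to pass through the iterated limit in $k$ and $n$). Nothing further is required.
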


\section{Proof of Theorem 1.1}
\subsection{The existence of critical elements}
{}{In this subsection, we will show if uniform global scattering norm bound fails for any finite energy solution to \eqref{critical}, then
there exists a critical element, which is a global solution with infinite scattering norm and minimal energy.}

Define
\begin{align*}
&\mathcal{E} =\Big\{m:{\rm{for}} \mbox{  }\forall \, u_0\in {\dot H}^1, \mathcal{E}(u_0)< E ,\\& {\rm{the \mbox{  } solution\mbox{  } to \mbox{  }(\ref{critical})\mbox{  } is \mbox{  } globally \mbox{  }wellposed \mbox{  }and \mbox{  }}}\|u(t,x)\|_{L^{\frac{2(d+2)}{d-2}}_{t,x}(\mathbb{R}\times \mathbb{R}^d)}<\infty\Big\}.
\end{align*}
Denote $E_*=\sup\{E: E \in \mathcal{E}\}$. We aim to prove $E_*=\infty$ by contradiction. Suppose that $E_*<\infty$, then there exists a sequence of solution{}{(up to time translations)} to (\ref{critical}), such that $\mathcal{E}(u_n)\nearrow E_*$, as $n\to \infty$, and
{}{\begin{align}\label{l15}
\lim\limits_{n\to \infty} \|u_n\|_{L^{\frac{2(d+2)}{d-2}}_{t,x}([0,\sup I_n)\times \mathbb{R}^d)}= \lim\limits_{n\to \infty} \|u_n\|_{L^{\frac{2(d+2)}{d-2}}_{t,x}((\inf I_n, 0]\times \mathbb{R}^d)} = \infty,
\end{align}
where $I_n$ denotes the maximal interval of $u_n$ including 0.}

Apply the linear profile decomposition to $u_n(0)$, we get $\varphi^j$, $\{(h^j_n, t^j_n)\}$ for which (\ref{mmm1}), (\ref{l3}), (\ref{p1}) hold and
\begin{align}\label{l26}
e^{it\Delta_V}u_n(0)=\sum\limits_{j = 0}^{k-1} {{e^{i(t - t_n^j){\Delta _V}}}T_n^j{\varphi ^j}}+w^k_n(t).
\end{align}
Now we construct the corresponding nonlinear profiles. Suppose that $U^j_n$ is a solution to (\ref{critical}) with initial data $U^j_n(0)=e^{-it^j_n\Delta_V}T^j_n\varphi^j$, then $U^j_n(t)$ satisfies
$$
U^j_n(t)=e^{i(t-t^j_n)\Delta_V}T^j_n\varphi^j-i\int^t_0e^{i(t-\tau)\Delta_V}\Big(|U^j_n|^{\frac{4}{d-2}}U^j_n\Big)(\tau) \,\mathrm{d}\tau.
$$
Let $U^j_n(t)={(h^j_n)^{-\frac{d-2}{2}}} v^j_n\Big(\frac{t-t^j_n}{(h^j_n)^2},\frac{x}{h^j_n}\Big)$. Then $v^j_n(t,x)$ satisfies
$$
v^j_n(t,x)=e^{itL(h^j_n)}\varphi^j-i\int^t_{-\frac{t^j_n}{(h^j_n)^2}}e^{i(t-s)L(h^j_n)}\Big(|v^j_n|^{\frac{4}{d-2}}v^j_n\Big)(s) \,\mathrm{d}s.
$$
If $h^j_n\to0$ or $h^j_n\to\infty$, let $u^j(t,x)$ be a solution to
$$
u^j=e^{it\Delta}\varphi^j-i\int^t_{\tau^j_{\infty}}e^{i(t-\tau)\Delta}\Big(|u^j|^{\frac{4}{d-2}}u^j\Big)(\tau)\,\mathrm{d}\tau,
$$
where $\tau^j_{\infty}=\mathop {\lim }\limits_{n \to \infty } \frac{{- t_n^j}}{{{{(h_n^j)}^2}}}.$ If $\tau^j_{\infty}=\pm\infty$, then $u^j$ is given by the wave operator. If $\tau^j_{\infty}\in \Bbb R$, then $u^j$ is given by the global well-posedness and scattering theorem in \cite{V}, and we have $\|u^j\|_{\dot{S}^1(\mathbb{R}\times \mathbb{R}^d)}<\infty$. If $h^j_n=1$, let $u^j$ be a solution to
$$
u^j=e^{it\Delta_V}\varphi^j-i\int^t_{\tau^j_{\infty}}e^{i(t-s)\Delta_V}\Big(|u^j|^{\frac{4}{d-2}}u^j\Big)(s)\,\mathrm{d}s.
$$
Again for $\tau^j_{\infty}=\pm\infty$, Lemma \ref{wave} gives the existence of $u^j$. For $\tau^j_{\infty}\in \Bbb R$, local Cauchy theory namely Lemma \ref{local} provides the existence of $u^j$ at least in a small interval. We call $u^j$ nonlinear profile.
Suppose $I^j=(T^j_{min},T^j_{max})$ is the lifespan of $u^j$, then by the definition of $u^j$, we have $u^j\in C_t^0 \dot{H}_x^1(I^j \times \mathbb{R}^d)$ and
\begin{align}
&\mathop {\lim }\limits_{n \to \infty }\left\|u^j\left(-\frac{t^j_n}{(h^j_n)^2}\right)-e^{-i\frac{t^j_n}{(h^j_n)^2}\Delta }\varphi^j\right\|_{{\dot H}^1}\to 0, \mbox{ }{\rm{if}}\mbox
{ }h^j_n\to 0,{\rm{or}}\mbox
{ }h^j_n\to \infty, \label{l9}\\
&\mathop {\lim }\limits_{n \to \infty }\left \|u^j\left(-\frac{t^j_n}{(h^j_n)^2}\right)-e^{-i\frac{t^j_n}{(h^j_n)^2}\Delta_V }\varphi^j\right\|_{{\dot H}^1}\to 0, \mbox{ }{\rm{if}}\mbox
{ }h^j_n=1.\label{l10}
\end{align}
Define $u^j_n(t,x) = {(h^j_n)^{-\frac{d-2}{2}}}u^j\Big( \frac{t-t^j_n}{(h^j_n)^2}, \frac{x}{h^j_n}\Big)$, then $u^j_n$ has the lifespan $I^j_n=((h^j_n)^2T^j_{min}+t^j_n, (h^j_n)^2T^j_{max}+t^j_n)$. Define
\begin{align}\label{l7}
u_n^{ < k}(t,x) = \sum\limits_{j = 0}^{k-1} u_n^j(t,x).
\end{align}

The following two lemmas are standard, which can be easily obtained by using the well-posedness and scattering theory in Lemma \ref{local} and Lemma \ref{scattering} as well as Proposition \ref{l4}.
\begin{lemma}\label{l77}
There exists $j_0\in \mathbb{N}$ such that $T^j_{min}=-\infty$, $T^j_{max}=\infty$ for $j>j_0$ and
\begin{align*}
\sum\limits_{j> j_0} \|u^j\|^2_{L^{\frac{2(d+2)}{d-2}}_{t,x}(\mathbb{R} \times \mathbb{R}^d)}\lesssim\sum\limits_{j>j_0}\|\varphi^j\|^2_{{\dot H}^1(\mathbb{R}^d)}<\infty.
\end{align*}
\end{lemma}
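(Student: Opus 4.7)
The plan is to exploit energy decoupling together with small-data scattering to reduce the lemma to a Cauchy-type summation estimate.

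First I would extract summability from the decomposition: by Proposition \ref{de}, for each fixed $t$ we have $\sum_{j=0}^{k-1}\mathcal{E}(v_n^j) + \mathcal{E}(w_n^k) = \mathcal{E}(u_n) + o_n(1)$, and by hypothesis $\mathcal{E}(u_n)\le E_*<\infty$. Evaluating at a convenient time (e.g.\ $t=t_n^j$ where $v_n^j$ concentrates) and passing to the limit, the energy of each profile reduces essentially to $\|\varphi^j\|_{\dot H^1}^2$: when $h_n^j\to 0$ or $\infty$, the potential energy $\langle Vv_n^j,v_n^j\rangle$ and the $L^{2d/(d-2)}$ term vanish in the limit by the rescaling identity \eqref{imp} and Corollary \ref{900}; when $h_n^j=1$, the $\dot H^1_V$ norm is still equivalent to $\dot H^1$ since by Lemma \ref{niudun} the defect from including $V$ is harmless for finite energy data. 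In every case one extracts a uniform lower bound $\mathcal{E}(v_n^j)\gtrsim \|\varphi^j\|_{\dot H^1}^2 - o_n(1)$ in the small-data regime, yielding $\sum_j \|\varphi^j\|_{\dot H^1}^2 \lesssim E_* < \infty$.

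Next I would choose $j_0$ so that $\|\varphi^j\|_{\dot H^1}<\eta_0$ for all $j>j_0$, where $\eta_0$ is the smaller of the two small-data thresholds: one from Lemma \ref{local} (for the NLS with potential), and one from the standard small-data theorem for the free energy-critical NLS (Visan \cite{V}) used to construct profiles with $h_n^j\to 0,\infty$. For such $j$ we split into the three possible scaling regimes dictated by Proposition \ref{de}:
\begin{itemize}
\item If $h_n^j=1$ and $\tau_\infty^j\in\mathbb{R}$, then $u^j$ solves the NLS with potential $V$ with $\dot H^1$ data $\varphi^j$ of small norm, so Lemma \ref{local} directly gives a global solution with $\|u^j\|_{\dot S^1(\mathbb R)}\le C\|\varphi^j\|_{\dot H^1}$ and hence $\|u^j\|_{L^{2(d+2)/(d-2)}_{t,x}}\le C\|\varphi^j\|_{\dot H^1}$.
\item If $h_n^j=1$ and $\tau_\infty^j=\pm\infty$, Lemma \ref{wave} provides $u^j$ with $\|u^j - e^{it\Delta_V}\varphi^j\|_{\dot H^1}\to 0$; smallness then lets us bootstrap via Strichartz to obtain the same linear-in-$\|\varphi^j\|_{\dot H^1}$ bound on the scattering norm.
\item If $h_n^j\to 0$ or $\infty$, the nonlinear profile $u^j$ solves the free energy-critical NLS, which has a small-data global theory in $\dot H^1$ with $\|u^j\|_{L^{2(d+2)/(d-2)}_{t,x}}\lesssim\|\varphi^j\|_{\dot H^1}$, independently of whether $\tau_\infty^j$ is finite (local theory) or infinite (wave operator).
\end{itemize}

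Finally, I would square and sum: $\sum_{j>j_0}\|u^j\|_{L^{2(d+2)/(d-2)}_{t,x}}^2 \lesssim \sum_{j>j_0}\|\varphi^j\|_{\dot H^1}^2 < \infty$, giving both the global existence ($T^j_{\min}=-\infty$, $T^j_{\max}=\infty$) and the claimed bound. The only genuinely delicate point is justifying the energy decoupling reduces to pure $\dot H^1$ control independent of the scale; the rescaling identity \eqref{imp} and Proposition \ref{888} make this an accounting exercise, but one must be careful to evaluate $\mathcal{E}(v_n^j)$ at the correct time (after pulling back by $T_n^j$) so that the cross terms in $\|\cdot\|_{\dot H^1_V}^2$ and in the $L^{2d/(d-2)}$ term decouple uniformly. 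Everything else is a routine application of Strichartz and the small-data theorems that are already recorded in the excerpt.
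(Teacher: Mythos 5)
Your proposal is correct and is essentially the argument the paper has in mind: the authors omit the proof, stating that the lemma follows in the standard way from the small-data theory (Lemma \ref{local}, Lemma \ref{wave}, the free energy-critical theory of \cite{V}) together with the almost-orthogonality \eqref{p1}, which is exactly the route you take. The only simplification worth noting is that for the summability step you do not need the potential terms to vanish in the limit: since $V\ge 0$ one has $\mathcal{E}(v_n^j)\ge\|v_n^j(t_n^j)\|_{\dot H^1}^2=\|\varphi^j\|_{\dot H^1}^2$ directly, because $T_n^j$ is an isometry of $\dot H^1$.
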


\begin{lemma}\label{l6}
In the nonlinear profile decomposition (\ref{l7}), if
\begin{align*}
\|u^j\|_{L^{\frac{2(d+2)}{d-2}}_{t,x}((T^j_{min},T^j_{max})\times \Bbb R^d)}<\infty, \mbox{  }\mbox{  }1\le j\le j_0,
\end{align*}
then $T^j_{min}=-\infty$, $T^j_{max}=\infty$ and for $1\le j\le j_0$, there exists $B,B_1>0$ such that
\begin{align*}
\mathop {\limsup }\limits_{n \to \infty }\|u^{<k}_n\|_{L^{\frac{2(d+2)}{d-2}}_{t,x}(\mathbb{R}\times \mathbb{R}^d)}\le B, \mbox{ }\mathop {\limsup}\limits_{n \to \infty }\|u^{<k}_n\|_{{\dot H}^1(\mathbb{R}^d)}\le B_1.
\end{align*}
\end{lemma}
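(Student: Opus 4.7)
The plan is to establish the three claims in sequence. The global existence assertion is immediate from the hypothesis and a scattering criterion: when $h^j_n = 1$, $u^j$ solves \eqref{critical} on its maximal interval and the finiteness of its scattering norm, together with Lemma~\ref{scattering}, forces $T^j_{\min} = -\infty$ and $T^j_{\max} = \infty$; when $h^j_n \to 0$ or $\infty$, $u^j$ solves the free energy-critical NLS on $\mathbb{R}^d$ and the analogous scattering criterion from \cite{V} delivers the same conclusion.

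For the scattering norm bound, I would first exploit the scale- and translation-invariance of the $L^{\frac{2(d+2)}{d-2}}_{t,x}$ norm under the transformation $u^j \mapsto u^j_n$, which gives
\[
\|u^j_n\|_{L^{\frac{2(d+2)}{d-2}}_{t,x}(\mathbb{R}\times\mathbb{R}^d)} = \|u^j\|_{L^{\frac{2(d+2)}{d-2}}_{t,x}(\mathbb{R}\times\mathbb{R}^d)}.
\]
Then the asymptotic orthogonality (\ref{l3}) of the parameters $\{(h^j_n, t^j_n)\}$ forces the decoupling
\[
\big\|u^{<k}_n\big\|^{\frac{2(d+2)}{d-2}}_{L^{\frac{2(d+2)}{d-2}}_{t,x}} = \sum_{j=0}^{k-1} \|u^j\|^{\frac{2(d+2)}{d-2}}_{L^{\frac{2(d+2)}{d-2}}_{t,x}} + o_n(1),
\]
obtained by expanding $\big|\sum_j u^j_n\big|^{\frac{2(d+2)}{d-2}}$ pointwise and showing that the cross terms vanish via H\"older and a change of variables, after approximating each $u^j$ by Schwartz-class data. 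The first $j_0$ summands are finite by hypothesis and the tail $j > j_0$ is controlled by Lemma~\ref{l77}, giving a uniform constant $B$. For the $\dot H^1$ bound, I would use Corollary~\ref{900} to identify $\dot H^1$ with $\dot H^1_V$, energy conservation along the respective equations to control $\sup_t \|u^j_n(t)\|_{\dot H^1_V}^2$ by $\mathcal{E}(u^j_n(0))$, and the energy decoupling (\ref{lu}) from Proposition~\ref{de} combined with $\mathcal{E}(u_n) \to E_*$ to make $\sum_j \sup_t \|u^j_n(t)\|_{\dot H^1}^2$ uniformly summable. A Pythagoras-type expansion in the spirit of Lemma~\ref{jk}, again with cross terms killed by (\ref{l3}), then delivers $B_1$.

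The main obstacle I anticipate is turning the qualitative orthogonality (\ref{l3}) into a quantitative vanishing of the off-diagonal $L^{\frac{2(d+2)}{d-2}}_{t,x}$ contributions. The three competing regimes $h^j_n/h^l_n \to 0$, $h^l_n/h^j_n \to 0$, and $|t^j_n - t^l_n|/(h^j_n)^2 \to \infty$ each require a separate change of variables; in each one first reduces to Schwartz-class profiles so that pointwise dispersive bounds on the linear evolutions can be applied on compact sets before re-summing. The $\dot H^1$ decoupling is comparatively easier but still requires linking the nonlinear profiles $u^j_n$ back to the linear profile components via Proposition~\ref{888} when $h^j_n \to 0$ or $\infty$, so that energy conservation of the appropriate limiting equation can be invoked.
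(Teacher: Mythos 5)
Your proposal is correct and is exactly the standard argument the paper has in mind: the paper states Lemma \ref{l6} without proof, remarking only that it follows from Lemma \ref{local}, Lemma \ref{scattering} and Proposition \ref{l4}, and your outline (scattering criterion for globalness, invariance of the $L^{\frac{2(d+2)}{d-2}}_{t,x}$ norm plus Keraani-type decoupling of cross terms via \eqref{l3} with the tail $j>j_0$ handled by Lemma \ref{l77}, and energy decoupling \eqref{lu} together with Proposition \ref{888} and norm equivalence for the $\dot H^1$ bound) is precisely that standard route. No gaps.
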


 \begin{lemma}\label{kk8}
 Let $j_0$ be the integer in Lemma \ref{l77}, then there exists $1\le j\le j_0$ such that
 $$
 \|u^j\|_{L^{\frac{2(d+2)}{d-2}}_{t,x}(I^j \times \mathbb{R}^d)}=\infty.
 $$
 \end{lemma}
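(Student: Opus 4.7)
The plan is to prove the lemma by contradiction via the stability theorem. Assume to the contrary that $\|u^j\|_{L^{2(d+2)/(d-2)}_{t,x}(I^j\times\mathbb{R}^d)}<\infty$ for every $1\le j\le j_0$. Combining this with the tail control of Lemma \ref{l77}, Lemma \ref{l6} then furnishes $T^j_{min}=-\infty$, $T^j_{max}=\infty$ for all $j$, and uniform-in-$n,k$ bounds
\begin{equation*}
\limsup_{n\to\infty}\|u^{<k}_n\|_{L^{2(d+2)/(d-2)}_{t,x}(\mathbb{R}\times\mathbb{R}^d)}\le B,\qquad \limsup_{n\to\infty}\|u^{<k}_n\|_{L^\infty_t\dot H^1_x(\mathbb{R}\times\mathbb{R}^d)}\le B_1.
\end{equation*}
The goal is to apply the stability theorem with $\tilde u=u^{<k}_n$ and obtain $\|u_n\|_{L^{2(d+2)/(d-2)}_{t,x}(\mathbb{R}\times\mathbb{R}^d)}\lesssim 1$, contradicting \eqref{l15}.

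To verify the initial-data hypothesis, use \eqref{l26} at $t=0$ and the identity $e^{-it^j_n\Delta_V}T^j_n=T^j_n e^{-it^j_n L(h^j_n)/(h^j_n)^2}$ derived from \eqref{imp} to write
\begin{equation*}
u_n(0)-u^{<k}_n(0)=w^k_n(0)+\sum_{j<k}T^j_n\Bigl[e^{-it^j_n L(h^j_n)/(h^j_n)^2}\varphi^j-u^j\bigl(-t^j_n/(h^j_n)^2\bigr)\Bigr].
\end{equation*}
When $h^j_n=1$ the bracket tends to zero in $\dot H^1$ by \eqref{l10}; when $h^j_n\to 0$ or $\infty$ the same conclusion follows by combining \eqref{l9} with Proposition \ref{lap} (if $-t^j_n/(h^j_n)^2$ stays bounded) or with Proposition \ref{888} (if it diverges, using the wave operator). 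Since $T^j_n$ is an $\dot H^1$ isometry, the whole sum is $o_n(1)$ in $\dot H^1$, and $e^{it\Delta_V}(u_n(0)-u^{<k}_n(0))=w^k_n+o_n(1)$ has small scattering norm as $k\to K$, $n\to\infty$ by \eqref{final} in Proposition \ref{de}.

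For the approximate-equation hypothesis, set $e^k_n=i\partial_t u^{<k}_n+\Delta_V u^{<k}_n-|u^{<k}_n|^{4/(d-2)}u^{<k}_n$ and decompose $e^k_n=e^{k,\mathrm{lin}}_n+e^{k,\mathrm{nl}}_n$, where $e^{k,\mathrm{lin}}_n=-\sum_{j<k,\,h^j_n\ne 1}V u^j_n$ arises because profiles with $h^j_n\to 0,\infty$ solve the free NLS rather than the NLS with $\Delta_V$, while $e^{k,\mathrm{nl}}_n=\sum_{j<k}|u^j_n|^{4/(d-2)}u^j_n-|u^{<k}_n|^{4/(d-2)}u^{<k}_n$ is the nonlinear cross term. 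For each fixed $j$ with $h^j_n\to 0,\infty$, scale-invariance of the $\nabla L^2_tL^{2d/(d+2)}_x$ norm reduces $\|\nabla(Vu^j_n)\|$ to $\|\nabla(V_{h^j_n}u^j)\|_{L^2_tL^{2d/(d+2)}_x}$, which tends to zero by the computation in the proof of Proposition \ref{888} after approximating $u^j$ by a Schwartz function in spacetime. The cross term $e^{k,\mathrm{nl}}_n$ is shown to vanish in the same norm by the asymptotic orthogonality \eqref{l3}: after pulling to the frame of any one profile, every mixed product $u^j_n\cdot u^l_n$ ($j\ne l$) tends to zero in the relevant Strichartz norm because the partner profile concentrates on a disjoint scale or at a disjoint time.

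The principal obstacle is to make the linear error control uniform on all of $\mathbb{R}$, since Proposition \ref{lap} supplies operator-norm convergence only on compact time intervals while the stability hypothesis demands a global estimate. I would handle this by splitting each profile $u^j$ in time at a large threshold $T$: inside $|t|\le T$ Proposition \ref{lap} applies uniformly over the finitely many profiles $j\le j_0$, while outside $|t|>T$ the profile $u^j$ is close in $\dot H^1$ to a free linear solution whose contribution is controlled using the strong-convergence Proposition \ref{888} together with the dispersive decay built into the definition of the scattering and inhomogeneous Strichartz norms; choosing $T$ large makes the exterior contribution arbitrarily small uniformly in $n,k$. Once both $e^{k,\mathrm{lin}}_n$ and $e^{k,\mathrm{nl}}_n$ are shown to vanish as $n\to\infty$ and $k\to K$, the stability theorem applies and yields the uniform bound $\|u_n\|_{L^{2(d+2)/(d-2)}_{t,x}(\mathbb{R}\times\mathbb{R}^d)}\lesssim 1$, contradicting \eqref{l15} and forcing the existence of at least one $j$, $1\le j\le j_0$, with $\|u^j\|_{L^{2(d+2)/(d-2)}_{t,x}(I^j\times\mathbb{R}^d)}=\infty$.
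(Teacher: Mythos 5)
Your proposal is correct in outline and shares the paper's skeleton (contradiction, construction of nonlinear profiles, verification of the stability-theorem hypotheses, appeal to Keraani-type orthogonality for the cross terms), but it makes one structurally different and consequential choice: you perturb around $\tilde u = u^{<k}_n$ alone, whereas the paper perturbs around $u^{<k}_n + w^k_n$. This moves the remainder $w^k_n$ out of the error term and into the initial-data discrepancy, where it is handled directly by \eqref{final} (its linear $\dot H^1$ size need not be small, only its linear evolution in the scattering norm, which the stability theorem permits). The payoff is that you never have to prove \eqref{l17}, i.e.\ the profile--remainder interaction estimate \eqref{final2}, which is the most delicate part of the paper's argument: its Cases 1--3 are precisely where Lemma \ref{chen2}, Corollary \ref{chen4} and the operator-norm convergence of Proposition \ref{lap} are used. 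The price is that your error term consists only of the linear potential error $\sum V u^j_n$ (handled globally in time by the density/scaling computation from Proposition \ref{888}, exactly as in the paper's proof of \eqref{l18}) and the pure profile cross terms, which you dispatch by orthogonality \eqref{l3} at the same level of detail as the paper's own citation of \cite{K,KV}. One remark: your closing paragraph identifies a ``principal obstacle'' that is not actually present in your setup. The global-in-time control of $\|\nabla(Vu^j_n)\|_{L^2_tL^{2d/(d+2)}_x}$ does not require Proposition \ref{lap} at all --- the compactly-supported approximation of $u^j$ in the global Strichartz space already does the job, as in the paper --- so the proposed time-splitting at a threshold $T$ is unnecessary; Proposition \ref{lap} is only needed when one must propagate a sequence $w^k_n(0)$ varying with $n$, which your choice of $\tilde u$ avoids.
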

\begin{proof}
We prove it by contradiction. Suppose that for $1\le j\le j_0$, $
\|u^j\|_{L^{\frac{2(d+2)}{d-2}}_{t,x}(I^j \times \mathbb{R}^d)}$\break $<\infty,
$
then {}{together with} Lemma \ref{l6}, we have $u^j_n$ exists globally for $j\ge 1$. Thus $u^{<k}_n+w^k_n$ exists globally. If we have verified that for $n,k$ sufficiently large, $u^{<k}_n+w^k_n$ is a perturbation of $u_n$, then by the stability theorem, we can derive a contradiction. From Proposition \ref{l4} and Lemma \ref{l6}, there exist positive constants $B$ and $B_1$ such that
\begin{align}
&\mathop {\limsup }\limits_{n \to \infty } {\left\| {u_n^{ < k} + w_n^k} \right\|_{L_t^\infty {{\dot H}^1}(\mathbb{R}\times \mathbb{R}^d)}} \le B \label{l12}\\
&\mathop {\limsup }\limits_{n \to \infty } {\left\| {u_n^{ < k} + w_n^k} \right\|_{L_{t,x}^{\frac{{2(d + 2)}}{{d - 2}}}(\mathbb{R}\times \mathbb{R}^d)}} \le {B_1}.\label{l13}
\end{align}
Denote $\tau^j_n=- \frac{t^j_n}{(h^j_n)^2}$. When $t=0$, by (\ref{poi2}), we can easily see
\begin{align*}
& \ {\left\| {u_n^{ < k}(0) + w_n^k(0) - {u_n}(0)} \right\|_{{{\dot H}^1}}}\\
= &\  {\left\| \sum\limits_{j = 0}^{k-1} \left( {{{\left( {h_n^j} \right)}^{ - \frac{{d - 2}}{2}}}{u^j}\left( { - \frac{{t_n^j}}{{{{(h_n^j)}^2}}},\frac{x}{{h_n^j}}} \right) - {{\left( {h_n^j} \right)}^{ - \frac{{d - 2}}{2}}}  \left({{e^{ - it_n^j{\Delta _V}}}{\varphi ^j}\left(\frac{\cdot}{{h_n^j}}\right)}  \right)(x)} \right) \right\|_{{{\dot H}^1}}}\\
\le  & \  {{\displaystyle\sum\limits_{j = 0}^{k-1} } \left\| { {{u^j}(\tau _n^j) - \left[ {{e^{ - it_n^j{\Delta _V}}}{\varphi ^j}\left(\frac{\cdot }{{h_n^j}}\right)} \right](h_n^j x )} } \right\|_{{{\dot H}^1}}} \\
= & \   {\sum\limits_{j = 0}^{k-1} \left\| { {{u^j}(\tau _n^j) - {e^{i\tau _n^jL(h_n^j)}}{\varphi ^j}} } \right\|_{{{\dot H}^1}}} .
\end{align*}
Combining (\ref{l9}), (\ref{l10}) with Proposition \ref{888}, we get
\begin{align}
\mathop {\lim}\limits_{n \to \infty }\|u_n(0)-u^{<k}_n(0)-w^k_n(0)\|_{{\dot H}^1} = 0.\label{l11}
\end{align}
We claim that
\begin{align}\label{l11}
\mathop {\lim }\limits_{k \to \infty } \mathop {\lim }\limits_{n \to \infty } {\left\| {\nabla [(i{\partial _t} + {\Delta _V})(u_n^{ < k} + w_n^k) - F(u_n^{ < k} + w_n^k)]} \right\|_{L_t^2L_x^{\frac{{2d}}{{d + 2}}}(\mathbb{R}\times \mathbb{R}^d)}} = 0.
\end{align}
where $F(u) = {\left| u \right|^{\frac{4}{{d - 2}}}}u.$
Suppose that the claim holds, then from (\ref{l12})-(\ref{l11}) and the stability theorem, we will obtain for $n$ sufficiently large,
$$
\|u_n\|_{L_{t,x}^{\frac{2(d+2)}{d-2}}(\mathbb{R}\times \mathbb{R}^d)}<\infty,
$$
which contradicts with (\ref{l15}). Thus Lemma \ref{kk8} follows. Therefore we only need to prove (\ref{l11}). Note that
\begin{align}
&(i\partial_t+\Delta_V)(u^{<k}_n+w^k_n)-F(u^{<k}_n+w^k_n)\\
= & \ \sum\limits_{j = 0}^{k-1}(i\partial_t+\Delta_V)u^j_n-F(u^{<k}_n)-F(u^{<k}_n+w^k_n)+F(u^{<k}_n),
\end{align}
it suffices to verify
\begin{align}\label{l16}
\mathop {\lim}\limits_{k \to \infty} \mathop {\lim }\limits_{n \to \infty }\left\|\nabla\bigg(\sum\limits_{j = 0}^{k-1}(i\partial_t+\Delta_V)u^j_n-F(u^{<k}_n)\bigg)\right\|_{L^2_tL_x^{\frac{2d}{d+2}}}=0,
\end{align}
and
\begin{align}\label{l17}
\mathop {\lim }\limits_{k \to \infty }\mathop {\lim }\limits_{n\to \infty }\left\|\nabla(F(u^{<k}_n+w^k_n)-F(u^{<k}_n))\right\|_{L^2_tL_x^{\frac{2d}{d+2}}}=0,
\end{align}
First, we prove (\ref{l16}). When $\mathop {\lim }\limits_{n\to \infty }h^j_n=0$ or $\mathop {\lim }\limits_{n\to \infty }h^j_n=\infty$, $u^j_n$ satisfies
$$
(i\partial_t+\Delta)u^j_n=F(u^j_n).
$$
From the scattering theorem in \cite{V}, we have
$$
\|u^j_n\|_{L^{\frac{2(d+2)}{d-2}}_{t,x}(\mathbb{R}\times \mathbb{R}^d)}<\infty.
$$
Direct calculations show
\begin{align*}
&\Big\|\nabla((i\partial_t+\Delta_V)u^j_n-F(u^j_n))\Big\|_{L^2_t L_x^{\frac{2d}{d+2}}}\\
&\le  \left\|\nabla\bigg(V(x)(h^j_n)^{-\frac{d-2}{2}}    u^j\bigg(\frac{t-t^j_n}{(h^j_n)^2},\frac{x}{h^j_n}\bigg)\bigg)\right\|_{L^2_tL_x^{\frac{2d}{d+2}}}\\
&\le   (h^j_n)^{-\frac{d-2}{2}}   \left\|\nabla V(x)  u^j\bigg(\frac{t-t^j_n}{(h^j_n)^2},\frac{x}{h^j_n}\bigg)\right\|_{L^2_tL_x^{\frac{2d}{d+2}}}\\
&\mbox{  }\mbox{  }+  (h^j_n)^{-\frac{d}{2}} \left \|V(x)(\nabla u^j)\bigg(\frac{t-t^j_n}{(h^j_n)^2},\frac{x}{h^j_n}\bigg)\right\|_{L^2_tL_x^{\frac{2d}{d+2}}}\\
&\le (h^j_n)^3 \left\|(\nabla V)(h^j_nx)u^j(t,x)\right\|_{L^2_tL_x^{\frac{2d}{d+2}}}+ (h^j_n)^2 \left\|V(h^j_n x )\nabla u^j(t,x)\right\|_{L^2_tL_x^{\frac{2d}{d+2}}}.
\end{align*}
For any $\varepsilon>0$, take a function $\tilde{u}^j\in C^{\infty}_c(\mathbb{R} \times \Bbb R^{d})$ for which $\|\tilde{u}^j-u^j\|_{L^2_tL_x^{\frac{2d}{d-4}}}<\varepsilon$. Then H\"older inequality implies
\begin{align*}
 & (h^j_n)^3 \left \|(\nabla V)( h^j_n x ) u^j(t,x)\right\|_{L^2_tL_x^{\frac{2d}{d+2}}}\\
\le &\ (h^j_n)^3 \left\|(\nabla V)(h^j_n x ) (u^j-\tilde{u}^j)(t,x) \right\|_{L^2_tL_x^{\frac{2d}{d+2}}}+ (h^j_n)^3 \left\|(\nabla V)(h^j_n x )\tilde{u}^j(t,x) \right\|_{L^2_tL_x^{\frac{2d}{d+2}}}\\
\le &\  \left\|u^j-\tilde{u}^j\right\|_{L^2_tL_x^{\frac{2d}{d-4}}}\left\|\nabla V \right\|_{L_x^{\frac{d}{3}}}+(h^j_n)^3\left\|\nabla V\right\|_{\infty} \left\|\tilde{u}^j\right\|_{L^2_tL_x^{\frac{2d}{d+2}}}.
\end{align*}
Letting $n\to \infty$, if $h^j_n\to 0$, we get $(h^j_n)^3 \left\|(\nabla V)( h^j_n x ) u^j(t,x)\right\|_{L^2_tL_x^{\frac{2d}{d+2}}}\to 0$, as $n\to \infty$, and the same arguments show $(h^j_n)^2 \left\| V( h^j_n x ) (\nabla u^j)(t,x)\right\|_{L^2_tL_x^{\frac{2d}{d+2}}}\to 0$, as $n\to \infty$. When $h^j_n\to \infty$, similar arguments work.
Hence for $h^j_n\to\infty$ and $h^j_n\to0$, we have proved
\begin{align}\label{l18}
\mathop {\lim }\limits_{n \to \infty }\left\|\nabla\Big((i\partial_t+\Delta_V)u^j_n-F(u^j_n)\Big)\right\|_{L^2_tL_x^{\frac{2d}{d+2}}}=0.
\end{align}
When $h^j_n=1$, (\ref{l18}) is obvious. By (\ref{l18}) and  triangle inequality, (\ref{l16}) can be reduced to
\begin{align}\label{l19}
\mathop {\lim }\limits_{k \to \infty } \mathop {\limsup }\limits_{n \to \infty }\bigg\|\nabla\Big(F\big(u^{<k}_n\big)-\sum\limits_{j = 0}^{k-1} F\big(u^j_n\big)\Big)\bigg\|_{L^2_tL_x^{\frac{2d}{d+2}}}=0.
\end{align}
Following the same arguments in \cite{K,KV}, (\ref{l19}) and (\ref{l17}) can be further reduced to
\begin{align}\label{final2}
\mathop {\lim }\limits_{k \to \infty } \mathop {\lim \sup }\limits_{n \to \infty } {\left\| {u_n^j\nabla  w_n^k} \right\|_{L_{t,x}^{\frac{{d + 2}}{{d - 1}}}}} = 0,\mbox{  }{\rm{for}}\mbox{  }{\rm{fixed}}\mbox{  }j.
\end{align}
By density arguments, we can assume $u^j\in C^{\infty}_c(\mathbb{R} \times {\Bbb R}^{d})$ with $supp \,u^j \subset [-T,T]\times [-R,R]^d$.\\
\vspace*{4pt}\noindent{\textbf{Case 1.}} If $h^j_n=1$, then $u^j_n(t,x) =u^j(t-t_n^j, x)$,
H\"older's inequality and Lemma \ref{chen2} give
\begin{align*}
{\left\| {u_n^j\nabla  w_n^k } \right\|_{L_{t,x}^{\frac{{d + 2}}{{d - 1}}}}}
\le & \  {\left\| {{\left\langle x \right\rangle }^\gamma} {u^j(t-t_n^j,x)} \right\|_{L_{t,x}^{\frac{{2(d + 2)}}{{d - 4}}}}}{\left\| {{{\left\langle x \right\rangle }^{ - \gamma}}\nabla w_n^k } \right\|_{L_{t,x}^2}} \\
\le & \  C\left\| {w_n^k} \right\|_{{L^2_tL^{\frac{2d}{d-4}}_x}}^{\frac{1}{3}}\left\| {w_n^k} \right\|_{L_t^\infty {{\dot H}^1}}^{\frac{2}{3}}.
\end{align*}
Thus Corollary \ref{chen4} yields (\ref{final2}).

\vspace*{4pt}\noindent{\textbf{Case 2.}}  If $h^j_n\to \infty$, by (\ref{imp}), H\"older's inequality and smoothing effect of the free Schr\"odinger equation, we get
\begin{align*}
&{\left\| {u_n^j\nabla  w_n^k } \right\|_{L_{t,x}^{\frac{{d + 2}}{{d - 1}}}}}\\
&\le   {\left( {h_n^j} \right)^{\frac{{d - 2}}{2}}}{\left\| {{u^j}\nabla \left( {\left( {{e^{it{\Delta _V}}}w_n^k(0)} \right)\Big({{(h_n^j)}^2}t  + t_n^j, h_n^j x \Big)} \right)} \right\|_{L_{t,x}^{\frac{{d + 2}}{{d - 1}}}}} \\
&\le    {\left( {h_n^j} \right)^{\frac{{d - 2}}{2}}}{\left\| {{u^j}\nabla \left( {{e^{i(t + \tau _n^j)L(h_n^j)}}w_n^k(0,h_n^j x )} \right)} \right\|_{L_{t,x}^{\frac{{d + 2}}{{d - 1}}}}} \\
&\le  {\left( {h_n^j} \right)^{\frac{{d - 2}}{2}}}{\left\| {{u^j}\nabla \left( {{e^{i(t + \tau _n^j)L(h_n^j)}}w_n^k(0,h_n^j x  )} \right) - \nabla \left( {{e^{it\Delta }}{e^{i\tau _n^jL(h_n^j)}}w_n^k(0, h_n^j x )} \right)} \right\|_{L_{t,x}^{\frac{{d + 2}}{{d - 1}}}}} \\
& + {\left( {h_n^j} \right)^{\frac{{d - 2}}{2}}}{\left\| {{u^j}\nabla \left( {{e^{it\Delta }}{e^{i\tau _n^jL(h_n^j)}}w_n^k(0,h_n^j x )} \right)} \right\|_{L_{t,x}^{\frac{{d + 2}}{{d - 1}}}}} \\
&\le  {\left( {h_n^j} \right)^{\frac{{d - 2}}{2}}}{\left\| {\left( {{e^{i(t + \tau _n^j)L(h_n^j)}}w_n^k(0,h_n^j x )} \right) - \left( {{e^{it\Delta }}{e^{i\tau _n^jL(h_n^j)}}w_n^k(0,h_n^j x )} \right)} \right\|_{{\bf{IH}}}} \\
&+ {\left\| {{u^j}\nabla {e^{it\Delta }}\left( {(T_n^j)^{-1} \left( {{e^{it_n^j{\Delta _V}}}w_n^k(0,x)} \right)} \right)} \right\|_{L_{t,x}^{\frac{{d + 2}}{{d - 1}}}([-T,T]\times [-R,R]^d)}} \\
&\le   C{\left\| {{e^{itL(h_n^j)}} - {e^{it\Delta }}} \right\|_{L({{\dot H}^1};{\bf{IH}}([-T,T]\times [-R,R]^d))}}{\left\| {( T_n^j)^{-1} \left( {{e^{it_n^j{\Delta _V}}}w_n^k(0,x)} \right)} \right\|_{{{\dot H}^1}}}\\
&+ C{\left\| {{u^j}{{\left\langle x \right\rangle }^{\gamma}}} \right\|_{L_{t,x}^{\frac{{2(d + 2)}}{{d - 4}}}}}{\left\| {{{\left\langle x \right\rangle }^{ -\gamma}}\nabla {e^{it\Delta }}\left( {( T_n^j)^{-1}\left( {{e^{it_n^j{\Delta _V}}}w_n^k(0,x)} \right)} \right)} \right\|_{L_{t,x}^2}} \\
&\le   C{\left\| {{e^{itL(h_n^j)}} - {e^{it\Delta }}} \right\|_{L({{\dot H}^1};{\bf IH})}}{\left\| {w_n^k(0,x)} \right\|_{{\dot H}^1}} \\
&+ C\left\| {w_n^k(0,x)} \right\|^{\frac{2}{3}}_{{{\dot H}^1}}\left\| {{e^{it\Delta }}\left( {( T_n^j)^{-1}  \left( {{e^{it_n^j{\Delta _V}}}w_n^k(0,x)} \right)} \right)} \right\|^{\frac{1}{3}}_{{L^2_tL^{\frac{2d}{d-4}}_x}} \\
&\le   C{\left\| {{e^{itL(h_n^j)}} - {e^{it\Delta }}} \right\|_{L({{\dot H}^1};{\bf {IH}})}} + C\left\| {{e^{it\Delta }}{e^{it_n^j{\Delta _V}}}w_n^k(0,x)} \right\|^{\frac{1}{3}}_{{L^2_tL^{\frac{2d}{d-4}}_x}},
\end{align*}
where the integrand domain for $L^{\frac{d+2}{d-1}}_{t,x}$ is restricted in $[-T,T]\times [-R,R]^d$.
Therefore (\ref{final2}) follows by (\ref{1102}) and Corollary \ref{chen4}.\\
\vspace*{4pt}\noindent{\textbf{Case 3.}} If $h^j_n\to 0$, replacing ${\bf{IH}}$ by ${\dot S}^1(-T,T)$ in case 2, we can similarly prove (\ref{final2}) by (\ref{1101}) and Corollary \ref{chen4}.
Thus Lemma \ref{kk8} follows.
\end{proof}

By Lemma \ref{l6} and Lemma \ref{kk8}, we can derive the critical element by using the standard argument in the compactness-contradiction argument.
\begin{proposition}[Existence and compactness of {}{a} critical element ]\label{l25}
Suppose that $m_*<\infty$, then there exists a global solution $u_c\in C_t^0 \dot{H}_x^1(\mathbb{R}\times \mathbb{R}^d) $ to \eqref{critical} such that
$$
\mathcal{E}(u_c(t))= E_{*}, \mbox{  }\mbox{  }{\rm{for}}\mbox{  }\mbox{  }\forall\, t\in {\Bbb R}, \mbox{  }{\rm{and}}
$$
$$
\|u_c\|_{L_{t,x}^{\frac{2(d+2)}{d-2}}([0,\infty)\times{\Bbb R}^d)}=\|u_c\|_{L_{t,x}^{\frac{2(d+2)}{d-2}}((-\infty,0]\times{\Bbb R}^d)}=\infty.
$$
Moreover, $\{u_c(t):t\in \Bbb R\}$ is pre-compact in ${\dot H}^1_{rad}({\Bbb R}^d)$.
Consequently, we have for any $\varepsilon>0$, there exits a constant $R_{\varepsilon}>0$, such that for all $t\in \mathbb{R}$,
\begin{align}\label{pppl}
{\int_{\left| x \right| \ge {R_\varepsilon }} {\left| {\nabla u_c} \right|} ^2}  + \frac{{{{\left| u_c \right|}^2}}}{{{{\left| x \right|}^2}}} + {\left| u_c \right|^{\frac{{2d}}{{d - 2}}}} \,\mathrm{d}x < \varepsilon.
\end{align}
\end{proposition}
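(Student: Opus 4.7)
The plan is to apply the linear profile decomposition of Proposition \ref{l4} to the minimizing sequence $\{u_n(0)\}$ from \eqref{l15}, set up the nonlinear profiles $\{u^j\}$ as constructed at the beginning of this subsection, and extract a single surviving nonlinear profile that will be the critical element. By Lemma \ref{kk8} at least one index $j_\ast \in \{1,\dots,j_0\}$ satisfies $\|u^{j_\ast}\|_{L^{2(d+2)/(d-2)}_{t,x}(I^{j_\ast}\times \mathbb{R}^d)} = \infty$. The energy decoupling \eqref{lu} combined with $\mathcal{E}(u_n) \nearrow E_\ast$ forces $\mathcal{E}(u^{j_\ast}) \le E_\ast$; since by definition of $E_\ast$ any nonlinear profile with energy strictly below $E_\ast$ scatters globally, I conclude $\mathcal{E}(u^{j_\ast}) = E_\ast$, that all other profiles must vanish, and that $\|w_n^k\|_{\dot H^1} + \mathcal{E}(w_n^k) \to 0$ after letting $n \to \infty$ and then $k \to K$.

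Next I rule out the scale-degenerate cases. If $h^{j_\ast}_n \to 0$ or $h^{j_\ast}_n \to \infty$, then by construction $u^{j_\ast}$ solves the potential-free energy-critical defocusing NLS, whose global well-posedness and scattering (Ryckman--Visan \cite{V}) produces a finite scattering norm, contradicting the choice of $j_\ast$. Therefore $h^{j_\ast}_n \equiv 1$. If the time parameter satisfied $\tau^{j_\ast}_\infty = \pm\infty$, Lemma \ref{wave} would produce $u^{j_\ast}$ via the wave operator on the corresponding half-line with finite scattering norm there, which together with Lemma \ref{scattering} and the two-sided blow-up in \eqref{l15} yields a contradiction; hence $\tau^{j_\ast}_\infty \in \mathbb{R}$, and after a time translation I may assume $t^{j_\ast}_n \equiv 0$. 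Setting $u_c := u^{j_\ast}$ and combining the smallness of $w_n^k$ with the stability lemma, I obtain $u_n(0) \to u_c(0)$ in $\dot H^1$, and Lemma \ref{scattering} then forces $u_c$ to be global with infinite scattering norm on each half-line and $\mathcal{E}(u_c) = E_\ast$.

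To show pre-compactness of the orbit $\{u_c(t) : t \in \mathbb{R}\}$ in $\dot H^1_{rad}$, I take an arbitrary sequence $\{t_n\} \subset \mathbb{R}$ and apply Proposition \ref{l4} to $u_c(t_n)$. Running the same minimal-energy argument, only one nontrivial profile can survive, and the same reasoning as above excludes both $h^1_n \to 0,\infty$ and $\tau^1_\infty = \pm\infty$; radiality rules out any spatial translation parameter from appearing. Therefore $u_c(t_n)$ converges strongly along a subsequence, proving pre-compactness. The uniform decay \eqref{pppl} then follows from a standard finite-cover argument: pre-compactness reduces the estimate to finitely many fixed elements of $\dot H^1$, for each of which Sobolev embedding and monotone convergence produce a common radius $R_\varepsilon$.

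The main obstacle I anticipate is in the two scale-degenerate cases $h^{j_\ast}_n \to 0$ or $\infty$: here the nonlinear profile $u^{j_\ast}$ must be built from the free flow $e^{it\Delta}$ rather than $e^{it\Delta_V}$, and closing the stability argument that identifies $u_n$ with $u_n^{<k} + w_n^k$ requires precisely the convergence between $e^{itL(\lambda)}$ and $e^{it\Delta}$ established in Propositions \ref{888} and \ref{lap}, together with the norm equivalence of Corollary \ref{900} to convert $\dot S^1$ estimates involving $(-\Delta_V)^{1/2}$ into estimates involving $\nabla$. This is exactly the technical machinery already developed in Section 3, so the argument reduces to careful bookkeeping rather than a new estimate.
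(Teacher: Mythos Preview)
Your outline is correct and matches the standard Kenig--Merle compactness--contradiction scheme that the paper invokes without proof (the paper simply records that Proposition \ref{l25} follows from Lemmas \ref{l6} and \ref{kk8} by the ``standard argument'' and gives no further details). One minor reordering: globality of $u_c$ is not a consequence of Lemma \ref{scattering} alone; the usual route is to run your pre-compactness argument on the maximal lifespan first and then use it---via a small-data/local-existence argument near an $\dot H^1$-accumulation point of $u_c(t_n)$ as $t_n$ approaches a finite endpoint---to exclude finite-time blow-up.
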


\subsection{Proof of Theorem \ref{th1.2}}
Define a nonnegative radial function $\phi\in C^{\infty}_c(\Bbb R)$ with
$$\phi (x) = \left\{ \begin{array}{l}
 {\left| x \right|^2},\left| x \right| \le 1, \\
 0,      \mbox{  }\mbox{  }             \left| x \right| \ge 2. \\
 \end{array} \right.
$$
Let $\phi_R(x)=R^2\phi\big(\frac{|x|}{R}\big),$ and
$$
V_R(t)=\int_{\Bbb R^d}\phi_R(x)|u(t,x )|^2\,\mathrm{d}x,
$$
where $u(t,x)$ is a solution to \eqref{critical}.
Then direct calculations give
\begin{align}
\frac{d}{dt}V_R(t)& = 2\Im \int_{{\Bbb R^d}} {\bar u\nabla u} \cdot \nabla {\phi _R} \,\mathrm{d}x, \label{sfd}\\
\frac{d^2}{dt^2}V_R(t)&= 4\Re \int_{{\Bbb R^d}} {{\partial _j}\bar u{\partial _k}u} {\partial _j}{\partial _k}{\phi _R}\,\mathrm{d}x - 2\int_{{\Bbb R^d}} {\nabla V}\cdot \nabla {\phi _R}{\left| u \right|^2}\,\mathrm{d}x - \int_{{\Bbb R^d}} {{\Delta ^2}} {\phi _R}{\left| u \right|^2} \,\mathrm{d}x \nonumber\\
 &\mbox{  }\mbox{  }+ \frac{4}{d}\int_{{\Bbb R^d}} {\Delta {\phi _R}} {\left| u \right|^{\frac{{2d}}{{d - 2}}}}\,\mathrm{d}x. \label{sdfg}
\end{align}
By the virial identity above, we can prove the nonexistence of the critical element thus yielding a contradiction, from which Theorem \ref{th1.2} follows.

\begin{proposition}\label{okn}
The critical element $u_c$ in Proposition \ref{l25} does not exist.
\end{proposition}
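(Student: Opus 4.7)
The plan is a localized virial/Morawetz argument based on the identities \eqref{sfd}--\eqref{sdfg}, combined with the precompactness of the orbit of $u_c$ and the favorable sign of $\partial_rV$. Choose $\phi$ as above so that $\phi(x)=|x|^2$ for $|x|\le 1$, and work with $\phi_R(x)=R^2\phi(|x|/R)$. On $|x|\le R$ we have $\partial_j\partial_k\phi_R=2\delta_{jk}$, $\Delta\phi_R=2d$, $\Delta^2\phi_R=0$, and $\nabla\phi_R=2x$, so the first and fourth terms of \eqref{sdfg} contribute $8\int_{|x|\le R}\bigl(|\nabla u_c|^2+|u_c|^{2d/(d-2)}\bigr)\,\mathrm{d}x$, while the potential term equals $-4\int_{|x|\le R}x\cdot\nabla V\,|u_c|^2\,\mathrm{d}x\ge 0$ because $V$ is radial with $\partial_rV\le 0$. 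In the annulus $R\le|x|\le 2R$ the Hessian, $\Delta\phi_R$, and $\Delta^2\phi_R$ are bounded uniformly in $R$ (with the bilaplacian of size $R^{-2}$) and $|\nabla\phi_R|\lesssim R$, so the corresponding contributions are dominated by
\[
\int_{|x|\ge R}|\nabla u_c|^2+|u_c|^{2d/(d-2)}+\tfrac{|u_c|^2}{|x|^2}\,\mathrm{d}x \;+\; R\!\!\int_{|x|\ge R}|\nabla V||u_c|^2\,\mathrm{d}x,
\]
and the last integral is absorbed using the decay hypothesis on $\nabla V$ together with Hardy's inequality.

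Next I would produce a lower bound of the form $\frac{d^2}{dt^2}V_R(t)\ge c E_*$ for some $c>0$, uniformly in $t\in\mathbb{R}$. By \eqref{pppl} applied to a sufficiently small $\varepsilon$, one chooses $R\ge R_\varepsilon$ so that every error term from the region $|x|\ge R$ is bounded by $\varepsilon$. For the main term, one uses the conservation of energy and the fact that $V$ is regular: Hardy together with $\langle x\rangle^N V\in L^\infty$ gives $\int V|u_c|^2\,\mathrm{d}x\le C\|u_c\|_{\dot H^1}^2$, so $\|u_c\|_{\dot H^1_V}^2\sim\|u_c\|_{\dot H^1}^2$ and hence
\[
\|u_c\|_{\dot H^1}^2+\|u_c\|_{L^{2d/(d-2)}}^{2d/(d-2)}\ge cE_*.
\]
Combining these two facts, for $\varepsilon$ small enough one gets $\frac{d^2}{dt^2}V_R(t)\ge \tfrac{c}{2}E_*$ for all $t$.

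On the other hand, from \eqref{sfd} and Cauchy--Schwarz,
\[
|V_R'(t)|\le C R\int_{|x|\le 2R}|u_c||\nabla u_c|\,\mathrm{d}x\le CR\,\bigl\|u_c/|x|\bigr\|_{L^2(|x|\le 2R)}\cdot R\cdot \|\nabla u_c\|_2\le CR^2 E_*,
\]
where the Hardy inequality turns the local $L^2$ norm of $u_c$ into $R$ times the $\dot H^1$ norm. Integrating the virial identity from $0$ to $T$ then yields
\[
\tfrac{c}{2}E_* T\le V_R'(T)-V_R'(0)\le 2CR^2 E_*,
\]
so $T\lesssim R^2$, which is absurd since $u_c$ is global and we may let $T\to\infty$ while keeping $R$ fixed. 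This contradicts the existence of $u_c$ and proves Proposition \ref{okn}.

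The main obstacle is the careful estimation of the two error sources: the region $|x|\ge R$ in the space integrals (handled uniformly in time by the precompactness statement \eqref{pppl}) and the potential--gradient term $\int \nabla V\cdot\nabla\phi_R|u_c|^2$, where the decay $\langle x\rangle^N V$ is needed so that the $R$-factor coming from $|\nabla\phi_R|$ is compensated. Everything else is routine, once the sign of the potential contribution on $|x|\le R$ has been fixed by the assumption $\partial_rV\le 0$, which is precisely why this hypothesis appears in Theorem \ref{th1.2}.
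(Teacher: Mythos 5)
Your proposal is correct and follows essentially the same route as the paper: the localized virial identity \eqref{sdfg} with $\phi_R$, positivity of the potential term on $|x|\le R$ from $\partial_r V\le 0$, smallness of the annulus errors via the uniform spatial concentration \eqref{pppl}, a time-uniform lower bound from energy conservation, and the contradiction between linear growth of $\frac{d}{dt}V_R$ and the bound $|\frac{d}{dt}V_R|\lesssim R^2\|\nabla u_c\|_2^2$ from \eqref{sfd} and Hardy. The only cosmetic difference is that you phrase the lower bound in terms of $E_*$ while the paper uses $\|u_c(0)\|_{\dot H^1}$ via Sobolev embedding; these are equivalent.
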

\begin{proof}
From Hardy's inequality and  (\ref{sfd}), it is easy to see
\begin{align}\label{kmk}
\left| \frac{d}{dt}V_R(t) \right| \le C{R^2}\left\| {\nabla {u_c}(t)} \right\|_2^2.
\end{align}
(\ref{sdfg}) gives
\begin{align}
\frac{d^2}{dt^2}V_R(t) = & \  4 \Re \int_{{\Bbb R^d}} {{\partial _j}{{\bar u}_c}{\partial _k}{u_c}} {\partial _j}{\partial _k}{\phi _R}\,\mathrm{d}x - 2\int_{{\Bbb R^d}} {\nabla V}\cdot \nabla {\phi _R}{\left| {{u_c}} \right|^2}\,\mathrm{d}x  \nonumber\\
  &- \int_{{\Bbb R^d}} {{\Delta ^2}} {\phi _R}{\left| {{u_c}} \right|^2}\,\mathrm{d}x\mbox{  }  + \frac{4}{d}\int_{{\Bbb R^d}} {\Delta {\phi _R}} {\left| {{u_c}} \right|^{\frac{{2d}}{{d - 2}}}}\,\mathrm{d}x \nonumber\\
\ge  & \ 8{\int_{\left| x \right| \le R} {\left| {\nabla {u_c}} \right|} ^2} + {\left| {{u_c}} \right|^{\frac{{2d}}{{d - 2}}}}\,\mathrm{d}x - 4\int_{\left| x \right| \le R} {{\partial_r V}\left| x \right|{{\left| {{u_c}} \right|}^2}}\,\mathrm{d}x \nonumber \\
  &\mbox{  }- {C_d}{\int_{R \le \left| x \right| \le 2R} {\left| {\nabla {u_c}} \right|} ^2} + \frac{{{{\left| {{u_c}} \right|}^2}}}{{{{\left| x \right|}^2}}} + {\left| {{u_c}} \right|^{\frac{{2d}}{{d - 2}}}} + {\partial_r V}{\left| x \right|^3}\frac{{{{\left| {{u_c}} \right|}^2}}}{{{{\left| x \right|}^2}}} \,\mathrm{d}x \nonumber\\
\ge &\  8{\int_{\left| x \right| \le R} {\left| {\nabla {u_c}} \right|} ^2} \,\mathrm{d}x - {C_d}{\int_{R \le \left| x \right| \le 2R} {\left| {\nabla {u_c}} \right|} ^2} + \frac{{{{\left| {{u_c}} \right|}^2}}}{{{{\left| x \right|}^2}}} + {\left| {{u_c}} \right|^{\frac{{2d}}{{d - 2}}}}\,\mathrm{d}x.\label{klKL}
\end{align}
By energy conservation and Sobolev embedding, we obtain  $\delta\|u_c(0)\|_{{\dot H}^1}\le \|u_c(t)\|_{{\dot H}^1}\break\le C\|u_c(0)\|_{{\dot H}^1}$, for some $C,\delta>0$. Hence by choosing $R$ sufficiently large, (\ref{pppl}) and (\ref{klKL}) imply for some $\delta_1>0$,
$$\frac{d^2}{dt^2}V_R(t) \ge {\delta _1}\|u_c(0)\|_{{\dot H}^1},$$
which combined with (\ref{kmk}) yields
$$
{\delta _1}t\|u_c(0)\|_{{\dot H}^1} \le \int_0^t \frac{d^2}{ds^2}V_R(s)\,\mathrm{d}s  =\frac{d}{dt}V_R(t) - \frac{d}{dt}V_R(0) \le C{R^2}\|u_c(0)\|_{{\dot H}^1}^2.
$$
Letting $t\to \infty$, we get a contradiction since $\|u_c(0)\|_{{\dot H}^1}\neq0$, thus finishing our proof of Proposition \ref{okn}, from which Theorem \ref{th1.2} follows.
\end{proof}

\section*{Acknowledgments}
Ze Li thanks Shanlin Huang for helpful discussions. The authors would like to acknowledge the anonymous referee for helpful comments and improvements.

\end{document}